\newtheorem{theorem}{Theorem}[section]
\newtheorem{lemma}[theorem]{Lemma}
\newtheorem{proposition}{Proposition}
\newtheorem{remark}{Remark}
\providecommand{\keywords}[1]{\textbf{\textit{Keywords---}} #1}
\title{\bf Path Planning in Unknown Environments\\ Using Optimal Transport Theory}
\author{Haoyan Zhai  \and Magnus Egerstedt \and Haomin Zhou}
\date{}
\begin{document}
\maketitle

\begin{abstract}
This paper introduces 
a graph-based, potential-guided method for path planning problems in unknown environments, where obstacles are unknown until the robots are in close proximity to the obstacle locations.
Inspired by optimal transport theory, the proposed method generates a graph connecting the initial and target configurations, and then finds a path over the graph using the available environmental information.
The graph and path are updated iteratively when newly encountered obstacle information becomes available. The resulting method is a deterministic procedure proven to be complete, i.e., 
it is guaranteed to find a feasible path, when one exists, in a finite number of iterations. The method is scalable to high-dimensional problems. In addition, our method does not search the entire domain for the path, instead, the algorithm only explores a sub-region that can be described by the evolution of the Fokker-Planck equation. We demonstrate the performance of our algorithm via several numerical examples with different environments and dimensions, including high-dimensional cases.

\keywords{path planning, \and unknown environment, \and optimal transport, \and Fokker-Planck equation.}
\end{abstract}

\section{Introduction}
This paper considers path planning for a robot, or possibly a group of robots, in an unknown environment. In other words, a set of robots in given initial configurations are tasked with finding a feasible path to the target configuration, while avoiding collisions with obstacles.
We consider scenarios where the number of robots is fixed and where obstacles are detected when they are within detection range to one of the robots in the group. We assume that the system employs a broadcast strategy in the sense that
the obstacle information, once available, is shared among the group immediately.
Compared to the path planning problem for known environments, there are several significant challenges when the problem is posed in unknown environments. First of all, re-planning while moving becomes necessary when a planned path is blocked by newly detected obstacles. This raises livelock concerns, i.e.,  the robots may end up moving in loops, and never reach the target even when there exist feasible paths. 
Secondly, the configuration space may be quite high-dimensional. Especially when there are a large number of robots present. As a result,  
grid-like discretizations often lead to intractable computations. In this case, working with graphs is a viable option to reduce the computation burden. However, the cost can still be high if the graph has to span everywhere in the high-dimensional space. Thirdly, there may be narrow pathways between obstacles, which poses significant hurdles to identify them in the search process.  Finally, for problems with unknown environments, optimality is only meaningful in the currently known environments. Hence, one may have to accept locally optimal, or even simply feasible, solutions in some cases. 

There exists an extensive literature on path planning.
For example, the well-known Probabilistic Road Map (PRM) method generates a random graph that does not intersect with obstacles and then finds a path over the graph to connect the initial and target configurations \cite{overmars1992random, kavraki1994randomized, amato1996randomized, svestka1997robot}. PRM guarantees a connection between the initial and target configurations when the graph is dense enough in the configuration space. Many additional PRM have been reported in the past decades, see \cite{hsu1997path,barraquand1997random,branicky2001quasi,boor1999gaussian} for details.

The Rapidly-exploring Random Tree (RRT) is another influential method  \cite{lavalle1998rapidly}. It creates a tree structure rooted at the initial configuration, making sure that all the vertices are connected to the initial one. At each step, the algorithm picks a configuration in the space randomly, and checks whether it can be added to the tree following certain criteria. This is continued until the target or a configuration close enough to the target is included in the tree. 
This algorithm has been adopted to path finding in unknown environments \cite{tian2007application,yang2013gaussian} and has recent improvements such as RRT$^*$ are reported in  \cite{karaman2011sampling,noreen2016optimal}.

The Artificial Potential Field (APF) assigns the robot a positive charge while the target configuration a negative one \cite{khatib1986real} . This drives the robot towards the target. To avoid collisions with the obstacles, APF sets the obstacles with positive charges that repel the robot.  Since it is a local gradient method, APF is amenable to high dimensional problems. However, a potential limitation of the original APF is the creation of unnecessary local minima due to the presence of obstacles, which may fail the algorithm. In recent years, there are reported improvements of APF, see \cite{sfeir2011improved,konrad2018secant,chen2016uav,montiel2015path} and the references therein.

The family of Bug Algorithms, starting from the original Bug0, Bug1 and Bug2 \cite{lumelsky1986dynamic,lumelsky1987path} to the later developments, such as TangentBug \cite{kamon1996new}, DistBug \cite{kamon1997sensory} and many other variants, adopt two basic modes as their design principle: motion-to-goal mode and boundary-following mode. They are powerful tools, with theoretical convergence guarantees, especially suitable for path planning in unknown environments in $2$ dimensional working space. Some recent survey and performance comparison studies can be found in \cite{mcguire2018comparative,ng2007performance}. 



In addition, widely known graph based methods, like $A^*$ \cite{doran1966experiments}, $D^*$ \cite{stentz1994optimal}, Focused $D^*$ \cite{stentz1995focussed} and $D^*$ lite \cite{koenig2005fast} can be used for path planning in both known and unknown environments \cite{podsedkowski2001new}. When applied to the problems in unknown environments, they often require to cover the entire region by discrete lattice grids, on which the algorithms are performed to find paths. In literature, there are other types of algorithms such as genetic algorithm \cite{walker2002comparison, lei2006improved}, evolutionary programming \cite{contreras2015mobile}, fuzzy logic \cite{hassanzadeh2009path,wang2005fuzzy}, neural network \cite{luo2008bioinspired}, network simplex method \cite{ersson2001path}, method of evolving junctions \cite{li2017method}, fast marching tree \cite{janson2015fast}, and a few hybrid approaches that combine different methods \cite{dolgov2010path, van2006anytime}, and also many more swarm strategies for multi-agent systems in recent years \cite{wagner2015subdimensional,cui2016mutual,das2016hybridization}.

In this paper, we present a potential guided, graph-based pathfinding method inspired by the evolution of Fokker-Planck Equation (FPE) in optimal transport theory \cite{villani2008optimal}. Optimal transport theory is a branch of mathematics studying how to transport one probability distribution to another with the optimal cost. There are different ways to formulate the theory such as using linear programming \cite{kantarovich1939mathematical}, or partial differential equations (PDEs) \cite{brenier1987decomposition, brenier1991polar} etc. Among various formulations, the optimal control approach reveals that the FPE is the gradient flow of a free energy, consisting of potential energy and entropy, in the probability space equipped with the so-called Wasserstein distance \cite{otto2001geometry}. Using the optimal transport theory, one can show that FPE can escape the traps of any local minima in a potential field and reach the Gibbs distribution which concentrates on the global minimizer. Incorporating this property and advantages of several existing algorithms, we design a novel method for path planning in unknown environments. Our goal is providing an alternative algorithm that can work efficiently, especially for problems with high dimensional configuration spaces, such as multi-agent systems. When designing our algorithm, we introduce a potential field, defined by the distance to the target configurations in this paper. The unique global minimum of the potential field is at the target configurations. Unlike APF, the obstacles do not contribute to the potential field, instead, they define the infeasible regions. We generate a graph that has a tree structure originated from the initial configuration, growing in a deterministic manner guided by the flow direction of FPE towards the target configuration. Our algorithm has the following features: 
\begin{enumerate}
\item The algorithm is a graph based deterministic procedure with guaranteed convergence properties, meaning that the algorithm terminates in a finite number of steps, and returns a feasible path if there exists one. Thus, the algorithm is complete. We would highlight that the convergence of our algorithm is deterministic, in contrast to the asymptotic convergence results shared by many methods using randomness.

\item If the algorithm does not find a feasible path, one concludes that from initial to target configurations, there does not exist a feasible trajectory such that one can find a tubular region, centered at the trajectory with a small radius, not intersecting with obstacles. The lower bound of the radius for the tubular region is proportional to the step size used in graph generation.

\item The path found by the algorithm is locally optimal in the known environment up to the current location of the robots.

\item The graph generated by the algorithm has a tree structure growing linearly with respect to the dimension of the configuration space. Together with the dimension reduction techniques proposed to rapidly escape the local traps, the algorithm can efficiently handle high dimensional problems.

\item The algorithm only explores a limited region defined by the solution of a FPE, even when the obstacles are not known {\it a priori}.
\end{enumerate}

It should be noted that optimal transport theory has been considered in several recent studies for path planning. For example, swarming robots are modeled by a distribution, and their optimal transport map is calculated by linear programming in \cite{bandyopadhyay2014probabilistic}.
Another paper presents a partial differential equation (PDE) based swarming model to the deployment of a large scale of robots using Kantorovick-Rubinstein relaxation of optimal transport theory  \cite{krishnan2018distributed}. Our method is different in that we directly use the evolution of FPE to guide the path construction. 

In the next section, we present the details of the algorithm with the finite step stopping property. In Section \ref{example}, we show some numerical examples to illustrate the performance in both low and high dimensional configuration spaces. Section \ref{speed_up} gives strategies for dimension reduction near local minima to further lower the computational cost. In Section \ref{FPE} the relationship between the algorithm and optimal transport theory is discussed. The convergence proof is given in Section \ref{convergence}. We end the paper with a brief conclusion in the last section. 

\section{Algorithm} \label{algorithm}
Let the configuration space $\Omega$ be a bounded connected domain in $\mathbb{R}^n$. We assume that the robots can alter configurations freely in $\Omega$ as long as the change does not violate the required constraints. There are two types of constraints we consider in this paper. One is the constraints known in advance, for example, two robots can't be too close or too far away from each other in the multi-agent system. We denote those constraints by
\[
\phi=(\phi_1,\phi_2\cdots,\phi_{k_1}):\Omega\longrightarrow\mathbb{R}^{k_1},
\]
and a configuration $x\in\Omega$ does not satisfy the constraints when $\phi_i(x)<0$ for some $i\in\{1,\cdots,k_1\}$. The other type of constraints is given by unknown environments, such as unknown obstacles. We represent them by
\[
\psi=(\psi_1,\psi_2\cdots,\psi_{k_1}):\Omega\longrightarrow\mathbb{R}^{k_2},
\]
and $\psi_i(x)<0$ for some $i\in\{1,\cdots,k_2\}$ means the constraints being violated. We emphasise that $\psi(x)$ can only be detected if robots are close enough to the obstacles. This implies that the knowledge of $\psi(x)$ must be updated dynamically while the robots are in motion. For the simplicity in discussion, we assume that both $\phi$ and $\psi$ are continuous.

To illustrate the setups, we give a single robot example in Figure \ref{fig:intro}. The configuration space is a square, all the gray bars are the obstacles that the robot cannot collide with. The light gray bars in the figure are obstacles undetected. Like in the second picture in Figure \ref{fig:intro}, if the robot moves horizontally but not too far away from its initial configuration, there is no detected obstacle. As the robot moves, more and more obstacles are recognized. Our goal is finding a path from the initial configuration $x_s$ (red diamond in Figure \ref{fig:intro}) to the target configuration $x_t$ (red circle in Figure \ref{fig:intro}). More precisely, we want to find a feasible path 
\[
\gamma(t): [0,T]\rightarrow\Omega,
\]
satisfying $\phi(\gamma(t))\geq0$ and $\psi(\gamma(t))\geq0$ for all $t\in[0,T]$, such as the red path in Figure \ref{fig:intro}, while $\psi(x)$ is updated with newly detected obstacles as $\gamma(t)$ changes. 

To describe the dynamical change of the unknown constraints while moving along a path, we mark a configuration $x$ as part of the detected obstacles if ($\psi_i(x)<0$) and $x$ is within distance $R$ to the current configuration of the robot. To be precise, we define 
\begin{align}
\widetilde{\psi}(x,t,\gamma)=\left\lbrace
\begin{array}{ll}
\psi(x)&\text{if }\|x-\gamma(\tau)\|\leq R\text{ for some }\tau\leq t\\
0&\text{otherwise}
\end{array}\right.
\label{equ:envupdate}
\end{align}
as the detected part of the environment along the path.

\begin{figure}\centering
\subfigure[] { \label{fig:intro1}
\includegraphics[width=0.31\columnwidth]{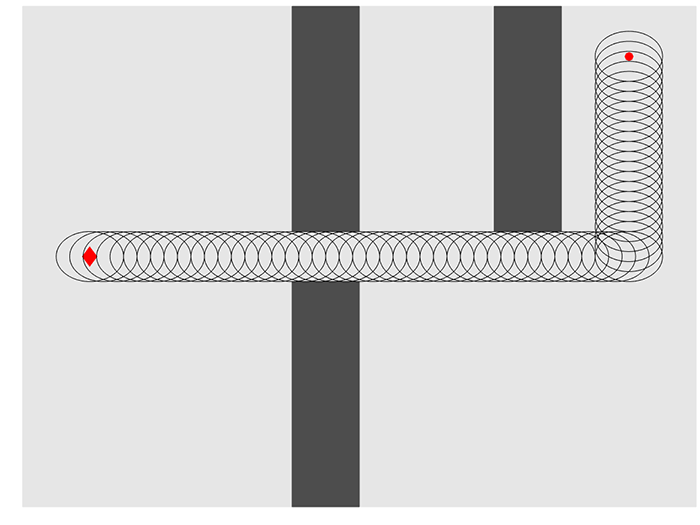} 
}
\subfigure[] { \label{fig:intro2}
\includegraphics[width=0.31\columnwidth]{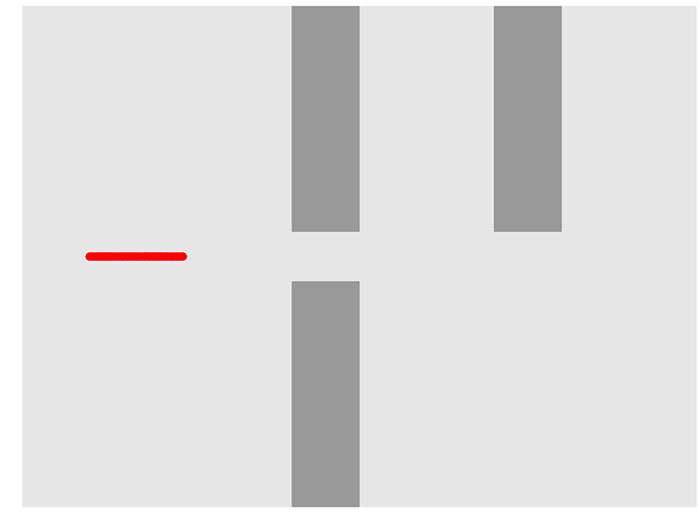} 
}
\subfigure[] { \label{fig:intro3}
\includegraphics[width=0.31\columnwidth]{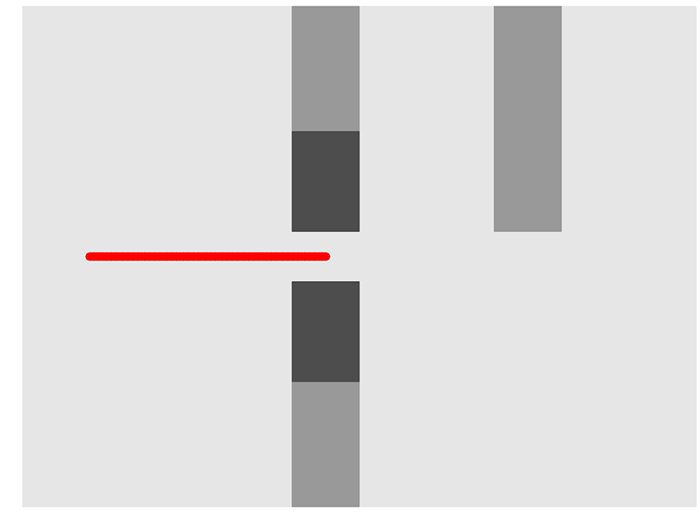} 
}
\subfigure[] { \label{fig:intro4}
\includegraphics[width=0.31\columnwidth]{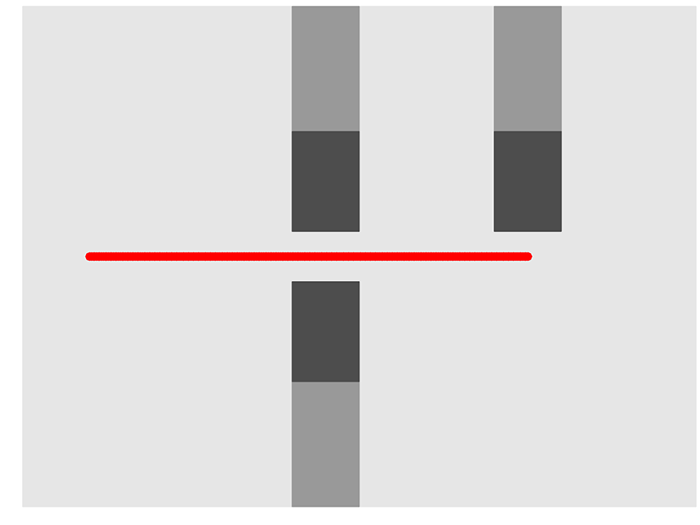} 
}
\subfigure[] { \label{fig:intro5}
\includegraphics[width=0.31\columnwidth]{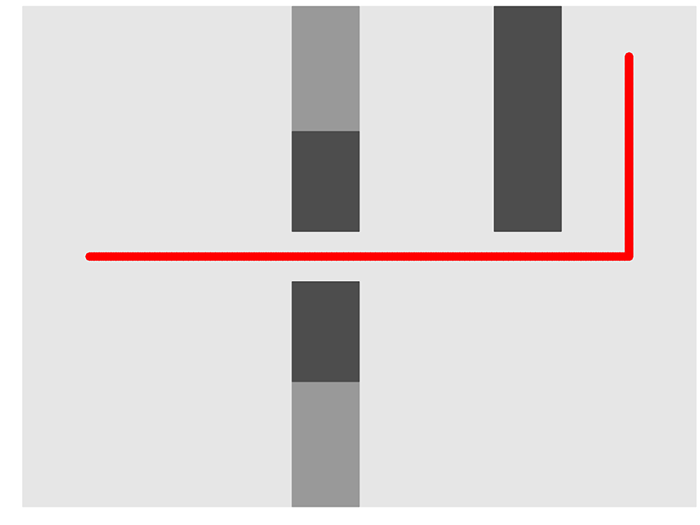} 
}
\caption{\small Environment, Moving and Obstacle Free Tube: The environment obstacles are the light and dark gray tubes (light as undetected and dark as detected). In (a), the red diamond and circle are the start and target configurations of the robot and the shaded tubular region is the obstacle-free region $\mathcal{T}$. (b)-(e) are the robot moving process along a certain path in chronological order.}
\label{fig:intro}
\end{figure}

For the convenience of discussion, we assume there exists at least a feasible path connecting the initial and target configurations, and this feasible path is contained in a tubular obstacle-free region $\mathcal{T}$ with radius $L$ as shown by the shadow part in the first picture in Figure \ref{fig:intro}. This assumption is a technique requirement that is used for the proof of the convergence and can be rewritten as the following equation
\begin{align}
\sup_{\gamma\in\Gamma}\inf_{t\in[0,T]}\sup_{r\geq0}\left\lbrace r:B(\gamma(t),r)\cap\mathcal{O}=\emptyset\right\rbrace=L>0,
\label{equ:feasibleregion}
\end{align}
where $\mathcal{O}=\left\lbrace x\in\Omega:\psi_i(x)<0\text{ for some }i\in\{1,\cdots,k_1\}\text{ or }\phi_i(x)<0\text{ for some }i\in\{1,\cdots,k_2\}\right\rbrace$ is an open set, and $B(x,r)=\{y\in\Omega:\|x-y\|<r\}$ is also open. We denote $S(x,r)=\{y\in\Omega:\|x-y\|=r\}$ as the boundary of $B(x,r)$, and $\partial\mathcal{O}$ as the boundaries of $\mathcal{O}$ separating the constrained regions from the feasible regions. 

Let us define the set of all possible paths from $x_s$ to $x_t$ in the full time interval $[0,T]$ as
\begin{align}
\Gamma=\left\lbrace\gamma:\gamma(0)=x_s,\gamma(t)=x_t,\forall t\geq T_0,\text{ for some }T_0\leq T,\gamma\cap\mathcal{O}=\emptyset\right\rbrace
\label{equ:feasiblepath}
\end{align}
Then our dynamical path planning algorithm in the unknown environment is given in Algorithm \ref{alg:1}:\\
\begin{algorithm}[H]
\DontPrintSemicolon
\SetAlgoLined
\caption{Path Planning in Unknown Environment}
\label{alg:1}
\KwData{initial configuration $x_s$, target configuration $x_t$, initially known constraints $\mathcal{O}_0$}
Current configuration $x_c=x_s$\;
Current known constraints $\mathcal{O}_c=\mathcal{O}_0$\;
\While{$x_c\neq x_t$}{
\textbf{Graph Generating:} Generate a connected graph $G$ containing $x_c,x_t$ with all edges and vertices not in $\mathcal{O}_c$\;
\textbf{Path Finding:} Find a (shortest) path $\gamma$ on $G$ from $x_c$ to $x_t$\;
\textbf{Environment Updating:} Moving along $\gamma$ while updating $\mathcal{O}_c$, if $\gamma$ is blocked by $\mathcal{O}_c$, stop at $x$ near the block point, otherwise let $x_c=x_t$\;
}
\end{algorithm}
In the remaining part of this section, we discuss, assisted with examples, the three major steps (Step 4, 5, and 6 in Algoirhtm \ref{alg:1}) in details.

\subsection{Graph generating} \label{graph_generating}
The first step is to generate a graph $G=(V,E)$, where $V$ is the vertex set and $E$ is the edge set, connecting the current configuration $x_c$ ($x_c=x_s$ for the first graph generation) and the target $x_t$ with currently known environment. The vertices are configurations in $\Omega$ while the edge $(u,v)$ linking $u,v\in V$ is the straight line segment between $u$ and $v$. Meanwhile, we would like to create the graph satisfying two properties: 1) the graph does not violate any known constraints; 2) the graph cannot contain too many vertices due to the computation complexity concern in the high dimensional cases. To achieve these goals, we introduce a convex potential function $p(x)$, admitting a unique global minimizer $x_t$, to help us choose the vertices. We select an $n$-dimensional orthonormal basis $N$ (here $n$ is the dimension of $\Omega$) to determine the directions which are used to add new vertices to $V$. For simplicity, we take $p(x)=\|x-x_t\|$, the distance to the target, as the potential, and the standard coordinate axes $N = \{e_j\}_{j=1}^{n}$ as the orthonormal basis in this paper.

At the first generating step, we let $V=\{x_c\}$ and $E=\emptyset$. In each step afterward, a vertex $v\in V$ with the lowest potential is chosen. We pick $2n$ new points $\{v_i\}_{i=1}^{2n}$ along the orthonormal basis $N$ originated at $v$, with distance $l$ to $v$, and use them as the candidates to expand $V$ (first figure in Figure \ref{fig:gg1}). Before adding those points into the vertex set, we first delete all candidates that violate the currently known constraints ($\phi_k(v_i)<0$ or $\widetilde{\psi}_j(v_i,\gamma,T_0)<0$ for some $k\in\{1,\cdots,k_1\}$ and $j\in\{1,\cdots,k_2\}$, $\gamma$ is the previous trajectory of the robots). For example, the robot shown in the left plot in Figure \ref{fig:gg1} stops at the red diamond position and generates four points around it. Among them, the point in the obstacle is removed (right picture in Figure \ref{fig:gg1}). Next, we delete vertices whose edges violate the constraints as shown in Figure \ref{fig:gg2}. In this case, there exists a point $x\in(v_i,v)$ such that $x$ is not in the feasible region. In addition, to avoid repeating vertices, we remove those already included in $V$ from the candidate list, as shown in Figure \ref{fig:gg3}. After these deleting steps, we add all remaining candidates, and their associated edges, to $V$ and $E$ respectively. This process is repeated until the target $x_t$ is within a small neighborhood of a vertex in $V$. For example, the final graph after several iterations is plotted in the right figure of Figure \ref{fig:gg3}.

To summarize, given the previous trajectory of robots $\gamma$ ($\gamma=x_s$ as default), we let the current constraints be
\[
\mathcal{O}_c=\left\lbrace x\in\Omega:\widetilde{\psi}_i(x,T_0,\gamma)<0\text{ for some }i\in\{1,\cdots,k_2\}\text{ or }\phi_i(x)<0\text{ for some }i\in\{1,\cdots,k_1\}\right\rbrace.
\]
The graph generating procedure can be written in the following algorithm (Algorithm \ref{alg:2}), and we define $x$ is the ancestor of $y$ if when $x$ is picked to generate nodes as the lowest potential node, $y$ is added to the vertex set as newly generated node.

\begin{algorithm}[H]
\DontPrintSemicolon
\SetAlgoLined
\caption{Graph Generation}
\label{alg:2}
\KwData{The starting configuration $x_c$, target configuration $x_t$, the potential function $p$, currently known environment $\mathcal{O}_c$, graph generating radius $l$ and a set of orthonormal basis $N$}
\KwResult{G=(V,E,p)}
$V=\{x_c\},Q=V,E=\emptyset$\;
\While{$t\not\in V$}{
	$point\_add=False$\;
	\While{not $point\_add$}{
		$v=\arg\min_{x\in V}p(x)$\;
		\uIf{$p(v)<+\infty$}{
			$K=\{q:q=v\pm l\times y,y\in N,(v,q)\cap\mathcal{O}_c=\emptyset,q\notin\mathcal{O}_c\}$\;
		    $K=K\backslash V$\;
		    $V=V\cup K$\;
		    $E=E\cup\{(v,q):q\in K\}$\;
		    \If{$K\neq\emptyset$}{
		    	$point\_add=True$\;
		    	$p(v)=+\infty$\;
		    }
		    \For{$q\in K$}{
		        \If{$\|q-x_t\|\leq L$ and $(q,x_t)\cap\mathcal{O}_c=\emptyset$}{
		            $V=V\cup\{x_t\}$\;
		            $E=E\cup\{(q,x_t)\}$\;
		        }
		    }
		}\Else{
		\Return{$G=\emptyset$\;}
		}
	    
	}
}
\Return{$G=(V,E,p)$\;}
\end{algorithm}

\begin{remark} The choice of the generating radius $l$ can be arbitrary, although $L$ and $l$ must satisfy an inequality to have the convergence guarantee theoretically (as is shown in Section \ref{theories}). Larger $l$ leads to fewer vertices in $V$ while smaller $l$ giving a finer search in $\Omega$. For simplicity, we treat those obstacles with distance less than $l$ to be a single obstacle by ignoring the gaps among them in our theoretical analysis. In practice, as shown in our experiments, the graph generation can still create nodes passing through the gap between obstacles with distance less than $L$ or even $l$. \end{remark}

\begin{figure}\centering
\includegraphics{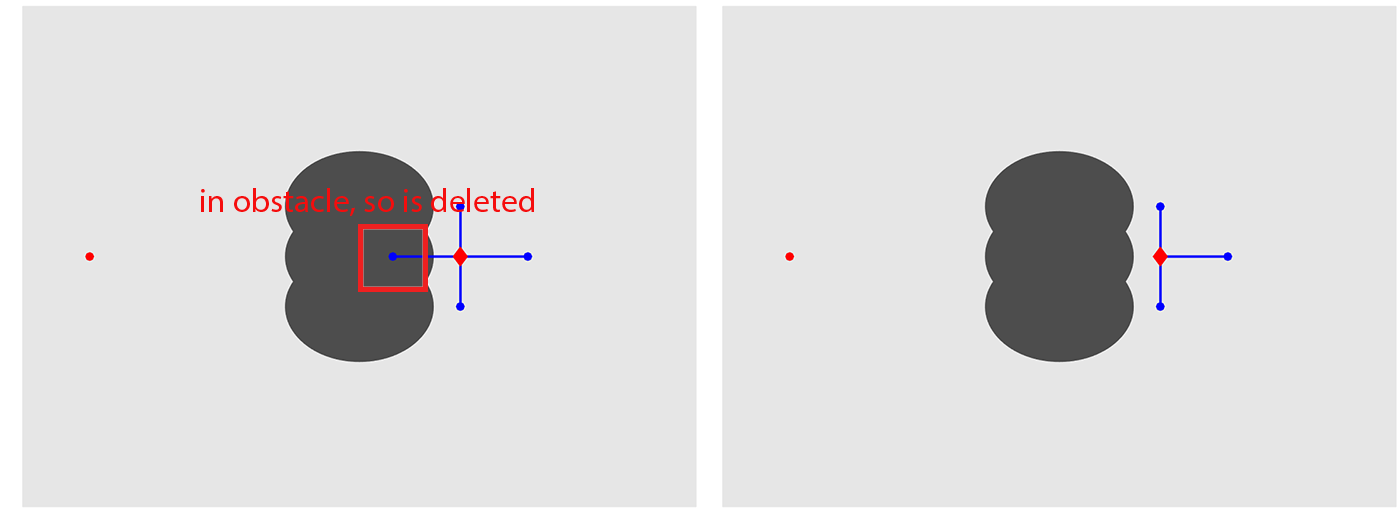}
\caption{\small graph generating steps: delete nodes in obstacles}
\label{fig:gg1}
\end{figure}
\begin{figure}\centering
\includegraphics{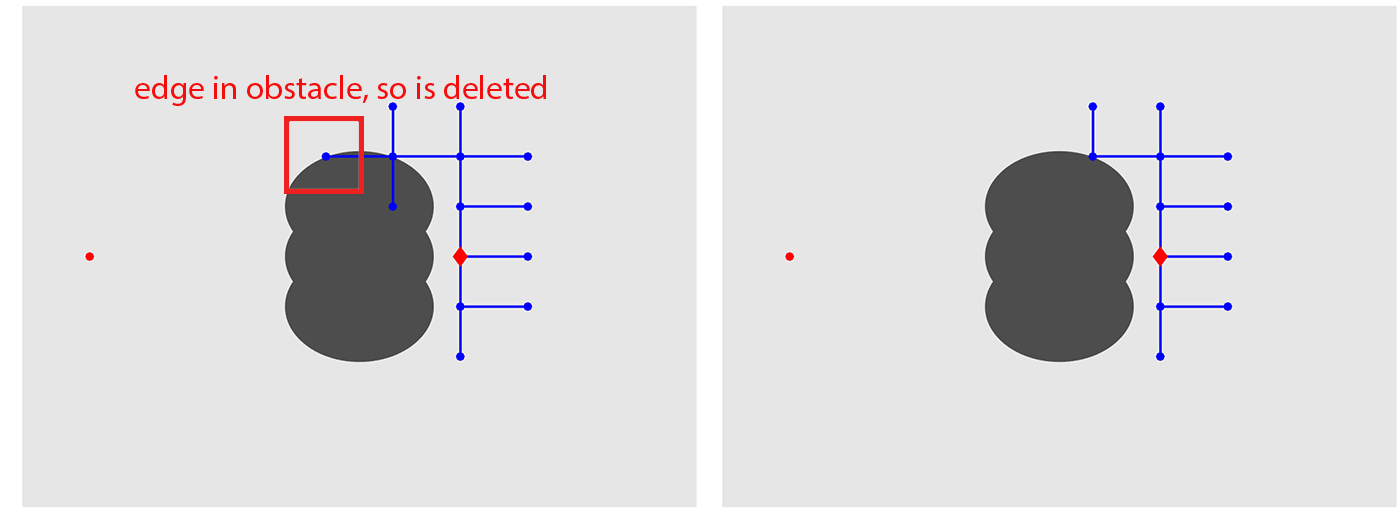}
\caption{\small graph generating steps: delete nodes cannot be linked to the base node}
\label{fig:gg2}
\end{figure}
\begin{figure}\centering
\includegraphics[width=1.0\columnwidth]{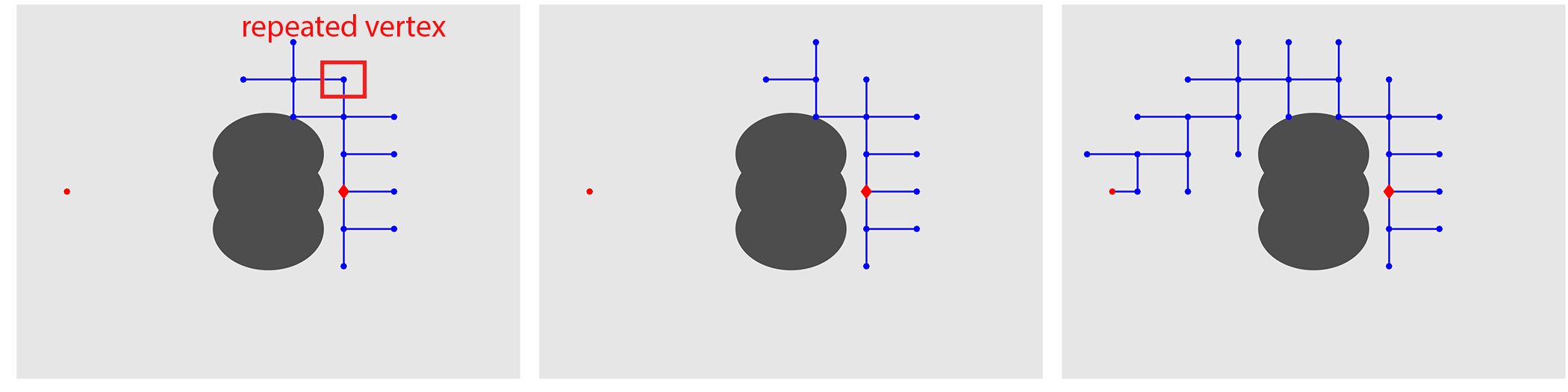}
\caption{\small graph generating steps: delete repeated nodes (left two) and final graph (right one)}
\label{fig:gg3}
\end{figure}

\subsection{Path finding} \label{path_finding}
After generating the graph $G=(V,E,p)$, the next step is to find a feasible path moving from the current configuration to the target using only vertices and edges on the graph. Our goal is to minimize the total travel distance. The graph generated by Algorithm \ref{alg:2} has the following property:
\begin{proposition}
There exists a unique path from the current configuration $x_c$ to the target $x_f$ over the generated graph $G$. If the path is denoted by $\{x_i\}_{i=1}^q\subset V$ with
\begin{equation*}
x_c=x_1\rightarrow x_2\rightarrow\cdots\rightarrow x_q=x_f,
\end{equation*}
$x_i$ is the ancestor of $x_{i+1}$.
\end{proposition}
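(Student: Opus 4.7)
The plan is to show that the graph $G$ produced by Algorithm \ref{alg:2} is a tree rooted at $x_c$, in which the parent of every non-root vertex is exactly its ancestor in the sense defined right after the algorithm. Once this is established, the proposition is immediate: in any tree there is a unique simple path between any two vertices, and the path from the root to a descendant traverses parent-child edges, which by definition are ancestor-descendant pairs.

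The tree property will be proved by induction on the number of outer-loop iterations. Initially $V = \{x_c\}$ and $E = \emptyset$, which is trivially a tree. For the inductive step, suppose $G$ is a tree rooted at $x_c$ at the start of an iteration. The iteration selects $v = \arg\min_{x \in V} p(x)$ and modifies $(V,E)$ only by adjoining a set $K$ of candidate vertices together with the edges $\{(v,q) : q \in K\}$, and possibly by adjoining $x_t$ and an edge $(q,x_t)$. The crucial lines $K = K \setminus V$ and the test $q \notin \mathcal{O}_c$ ensure that every vertex added in this step is genuinely new, and each such vertex comes attached to exactly one edge whose other endpoint already lies in the previous tree. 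Adding a fresh leaf along a single edge cannot create a cycle and preserves connectedness, so the updated graph is again a tree, with $v$ (respectively $q$) playing the role of the parent of each newly added vertex. This parent coincides with the ancestor defined in the paper.

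A second ingredient is the line $p(v) \leftarrow +\infty$ after expansion. This guarantees that no vertex is ever selected twice as the lowest-potential node, so the ancestor of each non-root vertex is assigned exactly once and is therefore unique. Putting these observations together, the unique path from $x_c$ to $x_f$ in the tree is of the form $x_c = x_1 \to x_2 \to \cdots \to x_q = x_f$, and each consecutive pair $(x_i, x_{i+1})$ is a parent-child edge, i.e.\ $x_i$ is the ancestor of $x_{i+1}$.

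The only real subtlety lies in the inner \textbf{For} loop that tries to attach $x_t$: a priori several $q \in K$ in the same iteration could satisfy the acceptance test $\|q - x_t\| \le L$ and $(q,x_t)\cap\mathcal{O}_c=\emptyset$, which would create parallel edges into $x_t$ and spoil the tree structure. This is the one place where care is needed. However, such a situation can arise only in the very iteration in which $x_t$ is first added (the outer \textbf{While} loop terminates immediately afterwards), so interpreting the algorithm as breaking out of the \textbf{For} loop as soon as $x_t$ is inserted — equivalently, retaining only the first qualifying edge — restores uniqueness of the ancestor of $x_t$ and completes the argument.
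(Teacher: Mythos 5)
Your proof is correct, and it rests on the same key fact as the paper's: because Algorithm \ref{alg:2} removes from $K$ every candidate already in $V$, each non-root vertex acquires exactly one ancestor, namely the node being expanded when it is created. The packaging differs, though. You prove a graph-level invariant by induction on the iterations of the algorithm --- $G$ is at all times a tree rooted at $x_c$ whose parent relation is the ancestor relation --- and then read off both existence and uniqueness of the path from standard tree facts. The paper instead works at the level of the path itself: it proves the ancestor-chain property $x_i \to x_{i+1}$ by induction along the path (two ancestors for one node would contradict the deletion step), and proves uniqueness by back-tracing the unique ancestor of each node from $x_f$. The two arguments are logically equivalent, but yours makes the structural reason (tree-ness) explicit and localizes the entire argument in the algorithm's loop invariant, which is arguably cleaner. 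You also catch a genuine subtlety that the paper's proof passes over: the \textbf{For} loop over $q\in K$ can, as literally written, attach several edges $(q,x_t)$ in the same iteration, giving $x_t$ multiple ancestors; the paper simply asserts that ``the algorithm stops once an edge is linked to the target,'' whereas you state the needed convention (retain only the first qualifying edge) explicitly. That extra care is a point in favor of your version; nothing in your argument is missing or wrong.
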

\begin{proof}
The existence of a path from the current configuration to the target is provided by the construction of the graph. We note that if two nodes share an edge, one of them is the ancestor of the other, which is also determined by the graph construction algorithm. Clearly, we have that $x_c$ is the ancestor of $x_2$. By induction, if we assume that $x_{i}$ is the ancestor of $x_{i+1}$, we claim that $x_{i+1}$ is the ancestor of $x_{i+2}$. Otherwise, it implies that $x_{i+2}$ must be the ancestor of $x_{i+1}$, which means that $x_{i+i}$ has two ancestors. This is a contradiction with the graph generation strategy: we delete all candidate nodes that are already generated in previous steps. 

For the uniqueness of the path, we first notice that the algorithm stops once an edge is linked to the target $x_f$, from which we conclude that $x_f$ has unique ancestor. By Algorithm \ref{alg:2}, every node except $x_c$ has a unique ancestor. If we assume that there are two paths $\gamma$ and $\xi$, denoted by $\{y_i\}_{i=1}^{m_1}$ and $\{z_i\}_{i=1}^{m_2}$ respectively, we must have $y_{1}=z_{1}=x_c$ and $y_{m_1}=z_{m_2}=x_f$. By the uniqueness of the ancestor for each node, we must have $y_{m_1-1}=z_{m_2-1}$. By induction, we have $m_1=m_2$ and $y_i=z_i$. Thus, $\gamma=\xi$ and the uniqueness is proven.
\end{proof}
By our graph generation algorithm, if there is an edge between two nodes, one of them must be the unique ancestor of the other. This suggests a simple strategy to identify the path: from the target configuration, we simply back trace the ancestor of each node in the path until reaching starting configuration $x_c$.

Other algorithms can be applied to find the path as well. For example, we can define the distance of the edge $e_{ij}$ linking vertices $v_i,v_j$ as 
\begin{align*}
k_{ij}=len((i,j))=\|v_i-v_j\|.
\end{align*}
Then the well-known Dijkstra method, or its improvements, can be used to obtain the path with computational complexity $O(|E|+|V|\log|V|)$ where $|V|$ is the number of vertices and $|E|$ is the number of edges \cite{dijkstra1959note}.

Another way is to assign each edge distance $1$ which is equivalently to introduce the modified adjacency matrix $K=(k_{ij})$ on the graph $G$, where
\[
k_{ij}=\left\lbrace\begin{array}{ll}
1&\text{if }(i,j)\in E\\
\infty&\text{otherwise}.
\end{array}\right.
\]
Then the Breadth First Search (BFS) can be used to find the path with the complexity $O(|E|+|V|)$ \cite{zuse1972plankalkul,moore1959shortest}, which is faster than Dijkstra. Other graph-based path planning algorithms, such as $A^*$, $D^*$ or $D^*$ lite, can be used too.

It is worth mentioning that if we assume the path has $\Lambda$ nodes, the suggested back-tracing approach is of complexity $O(\Lambda)$. While the generated graph has at least $O(n\Lambda)$ nodes. Obviously the complexities of BFS and Dijkstra methods are higher than our back-tracing strategy.

\subsection{Environment updating} \label{environment_updating}
While robots move along a path $\gamma_i$ in the configuration space, the knowledge of constraints is updated at the same time by (\ref{equ:envupdate}). We let
\[
\gamma_i:[0,T_0]\rightarrow\Omega
\]
be the current path, and if a point on the path intersects the boundary of the constrained region, the motion stops at a point before arriving the intersection. 

To be more precise, let us denote the environment update at each time step as
\[
\mathcal{O}_c^t=\mathcal{O}_c\cup\left\lbrace x\in\Omega:\widetilde{\psi}_j(x,t,\gamma_i)<0\text{ for some }j\in\{1,\cdots,k_2\}\right\rbrace.
\]
If the path is found activating constraints while moving at time $T_s$, i.e. $\gamma_i\cap\mathcal{O}_c^{T_s}\neq\emptyset$, we define
\[
T_b=\inf\{t:\gamma_i(t)\in\mathcal{O}_c^{T_s}\}
\]
as the first intersection time. Then $\gamma(T_b)$ must be on the boundary of $\mathcal{O}_c^{T_s}$, i.e. $\gamma(T_b)\in\partial\mathcal{O}_c^{T_s}$. When this happens, we can always pick a stopping time $T_i\leq T_b$ such that the distance from $\gamma(T_i)$ to the nearest obstacle is smaller than the detection radius $R$. Afterwards, we update $\mathcal{O}_c=\mathcal{O}_c^{T_i}$, assign the initial configuration as $x_c=\gamma(T_i)$ and go back to the graph generating step. Each time a new path $\gamma_i$ is produced when the current path is blocked. We collect all paths produced in Algorithm \ref{alg:1} as $\{\gamma_i\}_{i=1}^m$, and their stopping time set as $\{T_i\}_{i=1}^m$. From our choices of stopping time, we can require that there exists a positive constant $q$ satisfying $q<R$, and for all $\epsilon>q$, $B(\gamma_i(T_i),\epsilon)$ has non-empty intersection with $\mathcal{O}_c$ for every $i=1,\cdots,m$. Such selected stopping time set satisfies the following property
\begin{align}
\sup_i\inf_\epsilon\left\lbrace\epsilon:B(\gamma_i(T_i),\epsilon)\cap\mathcal{O}_c^{T_i}\neq\emptyset\right\rbrace=q<R,
\label{equ:stop}
\end{align}
in which the detectable region at configuration $x$, using (\ref{equ:envupdate}), is defined as a closed set by
\begin{align*}
\bar{B}(x,R)=\{y\in\Omega:d(x,y)\leq R\}
\end{align*}
We emphasis that $q$ can be selected uniformly. For example, we can simply let robots stop at a position that has a distance of $R/2$ to the obstacles each time when the path is blocked. In this setup, $q=R/2<R$. In general, we can select different stop positions. The finite-step convergence property, presented in the next Section, is guaranteed as long as \eqref{equ:stop} is satisfied.

\subsection{Convergence and complexity} \label{theories}
The proposed algorithms terminate in finite steps with guaranteed convergence, which is stated in the following main theorems.

\begin{theorem} \label{thm1} Assuming that (\ref{equ:feasibleregion}) is true and 
\[
l<\frac{2L}{\sqrt{n}},
\]
where $n$ is the dimension of $\Omega$, the graph generation algorithm (Algorithm \ref{alg:2}) stops in finite steps. That is,
the loop in the algorithm terminates in finite iterations, the generated graph $G=(V,E,K,p)$ is connected and has a finite number of vertices $|V|<\infty$. Furthermore, $x_s, x_t\in V$ if $\Gamma\neq\emptyset$.
\end{theorem}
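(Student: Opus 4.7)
The plan is to prove the three conclusions in sequence --- finite vertex count, finite-iteration termination, and inclusion of $x_s, x_t$ in $V$ when $\Gamma \neq \emptyset$ --- with the geometric heart of the argument being that the hypothesis $l < 2L/\sqrt{n}$ is equivalent to $l\sqrt{n}/2 < L$, i.e.\ the half-diagonal of an axis-aligned cube of side $l$ is strictly smaller than the tube radius. First I would observe that every vertex produced by Algorithm~\ref{alg:2} lies on the translated lattice $x_c + l\,\mathbb{Z}^n$, since each candidate is generated by translating an existing vertex by $\pm l\,e_j$ with $e_j \in N$. Because $\Omega$ is bounded, this lattice has only finitely many points inside $\Omega$, so the candidate pool is finite. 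Every vertex that is successfully expanded has its potential reset to $+\infty$ and is never picked again (handling the degenerate case $K=\emptyset$ by the same convention), so each iteration of the outer while loop consumes a distinct finite-potential vertex, forcing termination either by the attachment of $x_t$ or by $\arg\min p = +\infty$. Connectedness is immediate from construction: every new vertex is appended together with an edge to its ancestor, producing a rooted tree at $x_c$, and in particular $x_c = x_s$ belongs to $V$ on the first call.

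The substantive step is to show $x_t \in V$ under $\Gamma \neq \emptyset$. I would fix $\gamma^\ast \in \Gamma$ realising \eqref{equ:feasibleregion}, so that $B(\gamma^\ast(t), L) \cap \mathcal{O} = \emptyset$ for every $t \in [0,T]$. For any point $p$ on this tube centreline, the closed axis-aligned $l$-cube containing $p$ has all $2^n$ of its corners within distance $l\sqrt{n}/2 < L$ of $p$, hence inside the obstacle-free ball $B(p, L)$; by convexity of that ball, every axis-aligned edge between two such corners also lies in $B(p,L)$, hence is obstacle-free. I then inductively build a lattice walk $x_s = v_0, v_1, \dots, v_m$ tracking $\gamma^\ast$ cube by cube: whenever $\gamma^\ast$ crosses from one $l$-cube to an axis-adjacent one, transition $v_i \to v_{i+1}$ along the single axis on which the two cubes differ, always staying at a corner shared by the consecutive cubes. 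Continuity of $\gamma^\ast$ together with the finiteness of cubes traversed (boundedness of $\Omega$) guarantees that the walk is finite and terminates at some $v_m$ with $\|v_m - x_t\| \le L$ and $(v_m, x_t) \cap \mathcal{O} = \emptyset$, with every $v_i$ and every edge $(v_i, v_{i+1})$ obstacle-free.

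To close the argument I combine reachability of this walk with the greedy expansion order. Since the algorithm terminates and can only exit either by appending $x_t$ or by exhausting every finite-potential vertex, and since each $v_i$ and each edge $(v_i, v_{i+1})$ survive every obstacle filter in Algorithm~\ref{alg:2}, none of the $v_i$ can be permanently omitted: each must eventually be either already present in $V$ or added as a neighbour when its predecessor is expanded. In particular $v_m$ enters $V$, and then the inner for-loop recognises $\|v_m - x_t\| \le L$ with the segment $(v_m, x_t)$ obstacle-free and attaches $x_t$, so the empty-return branch cannot occur. The main obstacle I expect is the rigorous construction of the lattice walk in the middle paragraph: one must choose the transition corner unambiguously when $\gamma^\ast$ crosses a face of the cubical complex of codimension greater than one (several coordinates changing simultaneously), and verify via a continuity/compactness argument that this walk is well-defined, axis-aligned throughout, and terminates in finitely many steps inside the ball of radius $L$ around $x_t$.
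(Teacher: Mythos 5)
Your treatment of finiteness, termination, and connectedness is sound and essentially matches the paper's (which argues finiteness by noting that an infinite vertex set in the compact $\Omega$ would have a cluster point, contradicting the minimum lattice spacing $l$); your observation that the $K=\emptyset$ branch must also retire $v$ is a fair reading of Algorithm~\ref{alg:2}. The gap is in the geometric heart of your argument for $x_t\in V$. You claim that every corner of the closed axis-aligned $l$-cube containing a centreline point $p$ lies within $l\sqrt{n}/2<L$ of $p$. That bound is the distance from the \emph{centre} of the cube to a corner; for an arbitrary $p$ in the cube the farthest corner can be at distance up to the full diagonal $l\sqrt{n}$ (take $p$ near a corner), and the hypothesis $l<2L/\sqrt{n}$ only gives $l\sqrt{n}<2L$, not $<L$. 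Consequently the corners visited by your lattice walk are not guaranteed to lie in the obstacle-free tube $\bigcup_t B(\gamma^\ast(t),L)$, so the walk's vertices and edges may fail the feasibility filters and the induction that forces them into $V$ collapses. Your construction would go through under the stronger assumption $l<L/\sqrt{n}$, but as written it loses a factor of $2$ in the admissible step size relative to the theorem.

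The paper closes exactly this gap with a global rather than a tracking argument (Lemmas~\ref{Lemma 3}--\ref{Lemma 5}): supposing the algorithm halts without attaching $x_t$, it forms $C=\bigcup_{v\in V}\bar{B}(v,L)$ and considers the \emph{last} point $x$ where the feasible path of Lemma~\ref{Lemma 2} meets $\partial C$. At such a point there is a vertex $v_c$ with $\|v_c-x\|=L$ exactly, so some coordinate of $v_c-x$ has magnitude at least $L/\sqrt{n}$, and a single axis step of length $l$ from $v_c$ toward $x$ decreases the squared distance by at least $2Ll/\sqrt{n}-l^2$, which is positive precisely when $l<2L/\sqrt{n}$. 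That step produces a new feasible candidate inside $B(x,L)$, a ball containing no existing vertex, contradicting termination. If you want to keep a cube-tracking strategy under the stated hypothesis, you would need to replace ``all corners of the containing cube are feasible'' by an invariant of this sphere-crossing type; otherwise you must strengthen the hypothesis on $l$.
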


\begin{theorem} \label{thm2} Let $\{\gamma_i\}_{i=1}^m$ be the paths produced by Algorithm \ref{alg:1} with $\{T_i\}_{i=1}^m$ being the stopping time set. If the assumptions in Theorem \ref{thm1} and (\ref{equ:stop}) hold, then $m<\infty$.
\end{theorem}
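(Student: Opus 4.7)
The plan is to prove Theorem \ref{thm2} by contradiction together with a volume--packing argument. Assume $m=\infty$; I aim to show that the stopping points $z_i := \gamma_i(T_i)$ form an infinite subset of the bounded domain $\Omega$ whose pairwise distances are bounded below by $R-q>0$, which is geometrically impossible.

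The key object will be, for each $j\ge 2$, the \emph{first blocking point} $p_j$ on $\gamma_j$: the first point of $\gamma_j$ after $z_{j-1}$ that lies in the true constrained set $\mathcal{O}$. Such a $p_j$ exists because $\gamma_j$ is blocked. Since $\gamma_j$ is composed of edges of the graph $G_j$, and Algorithm \ref{alg:2} builds $G_j$ to be disjoint from $\mathcal{O}_c^{T_{j-1}}$, the point $p_j$ cannot belong to $\mathcal{O}_c^{T_{j-1}}$; therefore $p_j\in\mathcal{O}\setminus\mathcal{O}_c^{T_{j-1}}$, i.e., $p_j$ is a freshly detected obstacle during step $j$. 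The stopping rule behind \eqref{equ:stop}, realized by the natural strategy of halting close to the blocking obstacle (as in the $q=R/2$ example following \eqref{equ:stop}), then gives the crucial bound $\|z_j - p_j\|\le q$.

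The separation claim $\|z_i - z_j\|\ge R-q$ for all $i<j$ then follows from a short triangle--inequality contradiction: if instead $\|z_i-z_j\|<R-q$, then
$$\|p_j - z_i\|\ \le\ \|p_j - z_j\| + \|z_j - z_i\|\ <\ q + (R-q)\ =\ R,$$
so $p_j\in B(z_i,R)\cap\mathcal{O}$. The detection rule \eqref{equ:envupdate}, applied along the path $\gamma_i$ at time $T_i$ when the robot sits at $z_i$, forces $\mathcal{O}\cap B(z_i,R)\subset\mathcal{O}_c^{T_i}$. Combined with the monotonicity $\mathcal{O}_c^{T_i}\subset\mathcal{O}_c^{T_{j-1}}$, this yields $p_j\in\mathcal{O}_c^{T_{j-1}}$, contradicting the newness of $p_j$.

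Once the separation is in hand, packing disjoint open balls of radius $(R-q)/2$ around each $z_i$ produces infinitely many pairwise disjoint sets of fixed positive volume inside a bounded subset of $\mathbb{R}^n$, which is impossible; hence $m<\infty$. The main delicate point I anticipate is the claim $\|z_j - p_j\|\le q$: the hypothesis \eqref{equ:stop} literally only provides proximity to \emph{some} (possibly previously known) obstacle, so the proof must either argue that the freshly detected blocking point can always serve as the witness in \eqref{equ:stop}, or replace $q$ by a slightly larger constant built from the graph spacing $l$ and the detection radius $R$ using the blocked edge of $G_j$; either way the essential gap $R-q>0$, and hence the packing argument, survives.
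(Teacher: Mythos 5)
Your proposal is correct and follows essentially the same route as the paper: the paper extracts a cluster point of the stopping set $\{\gamma_k(T_k)\}$ via compactness of $\Omega\backslash\mathcal{O}_c^i$ and then derives exactly the contradiction you describe --- two stopping points within $R-q$ of each other force the later path's blocking obstacle into $\bar{B}(\gamma_i(T_i),R)\cap\mathcal{O}_c=\mathcal{O}_c^{i}$, a set the later graph was constructed to avoid --- so your pairwise-separation-plus-ball-packing phrasing is just an equivalent repackaging of that compactness step. The delicate point you flag, that \eqref{equ:stop} literally only certifies proximity of $\gamma_j(T_j)$ to \emph{some} point of $\mathcal{O}_c^{T_j}$ rather than to the blocking point itself, is also present but glossed over in the paper's proof (which simply asserts $\bar{B}(\gamma_i(T_i),R)\cap\mathcal{O}_c\cap\gamma_j\neq\emptyset$), so your explicit acknowledgment of it is, if anything, more careful than the original.
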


Theorem \ref{thm1} shows that, given the currently known environment, the graph generating procedure stops in finite steps. Theorem \ref{thm2} tells that our algorithm breaks the loop in Algorithm \ref{alg:1} in finite steps. The two theorems together ensure that Algorithm \ref{alg:1} is convergent in finite steps and guarantees a feasible path with the condition (\ref{equ:feasibleregion}). Therefore, the algorithm is complete. We leave the proofs of both theorems in Section \ref{convergence}.

Furthermore, if the configuration space $\Omega$ is of dimension $n$, there are at most $2n$ new points generated at each step in Algorithm \ref{alg:2}, hence the growth rate for the size of the graph $V$ is $O(n)$ at each iteration. The complexity of the Updating Environment step relies on the techniques used to detect the environment, so we do not consider it here. Overall, the proposed algorithms are scalable to high dimensional problems, because the growth of the graph is controlled linearly with respect to the dimension $n$ and it stops in finite steps. This feature is illustrated by our numerical examples presented in the next section.

\section{Numerical Examples} \label{example}
We set the working space to be $[0,1]\times[0,1]$ in all examples and denote the graph generating radius as $l$. In this section, we show various low-dimensional experiments to give a basic impression on how the algorithm works, followed by several high dimensional cases with different environments. In all examples, the start configurations are always marked as red diamonds while the targets are the red circles. The potential function is taken as the Euclidean distance from any point $x$ to the target $x_t$, i.e. $p(x)=\|x-x_t\|$ where $x,x_t\in\Omega\subset\mathbb{R}^n$.

\subsection{Low dimensional cases}
The first example is one robot moving in an unknown environment (Figure \ref{fig:g1}). The configurations are the physical locations of the robot, so this is a two-dimensional problem. We take $l=0.03$ in our graph generating algorithm. Initially, the robot is at the top right corner. It only has the knowledge of a few nearby obstacles at the beginning, while other obstacles are not known. Hence, the graph expands towards the target greedily until reaching the destination as shown in Figure \ref{fig:g1}(a). The first path is found by BFS on this graph, shown in Figure \ref{fig:p1}(a). However, while moving, the robot detects that the path is blocked. It stops before reaching the obstacle boundary and starts a new round of graph generating, path finding and environment updating steps. During the process, the robot generates several graphs (Figure \ref{fig:g1}(b,c,d)) and updates the environment while moving along the corresponding paths as shown in Figure \ref{fig:p1}(b,c,d), all of which fail to reach the destination. In the end, it generates a graph (Figure \ref{fig:g1}(e)) and finds a path (Figure \ref{fig:p1}(e)) to the target. The complete path from initial to target is provided in Figure \ref{fig:p1}(f).

The set-ups for the next example are all the same as the previous one except the initial configuration. The generated graphs are depicted in Figure \ref{fig:g2}(a-e) in time order and the corresponding paths are in Figure \ref{fig:p2}(a-e) while the complete path is shown in Figure \ref{fig:p2}(f). Despite of the difference in the initial configurations, the algorithm gives similar paths (Figure \ref{fig:p1}(f) and \ref{fig:p2}(f)). In the next experiment, we keep the settings used in the second example, but enlarge the generating radius $l$ from $0.03$ to $0.05$. By doing so, the robot can no longer move into the central box from the top left corner as it does in the first two examples (Figure \ref{fig:p1}(e) and \ref{fig:p2}(e)). Instead, it moves down and finds a different way to the destination. This path reaches higher potential area than the previous paths. The graphs for this example are depicted in Figure \ref{fig:g3} and paths are in Figure \ref{fig:p3} respectively. 

To conclude the lower dimensional cases, we display the graphs and paths produced by Algorithm \ref{alg:1} for a different environment in Figure \ref{fig:g4} and \ref{fig:p4}. Similar behaviors can be observed in those pictures.

\begin{figure} \centering
\subfigure[] { \label{fig:g11}
\includegraphics[width=0.31\columnwidth]{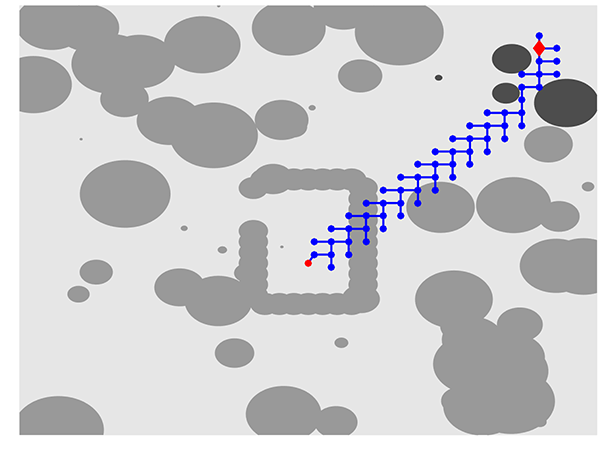} 
}
\subfigure[] { \label{fig:g12}
\includegraphics[width=0.31\columnwidth]{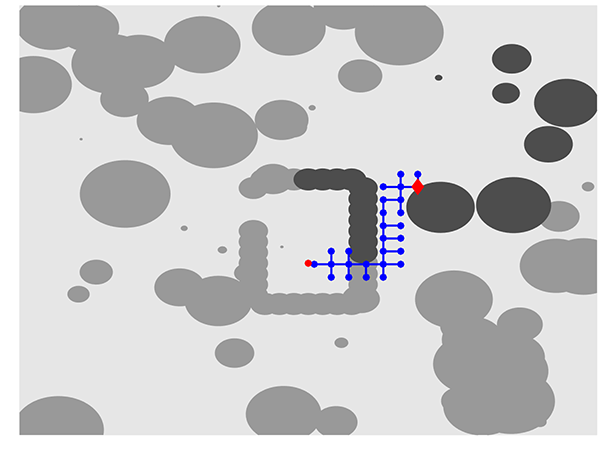} 
}
\subfigure[] { \label{fig:g13}
\includegraphics[width=0.31\columnwidth]{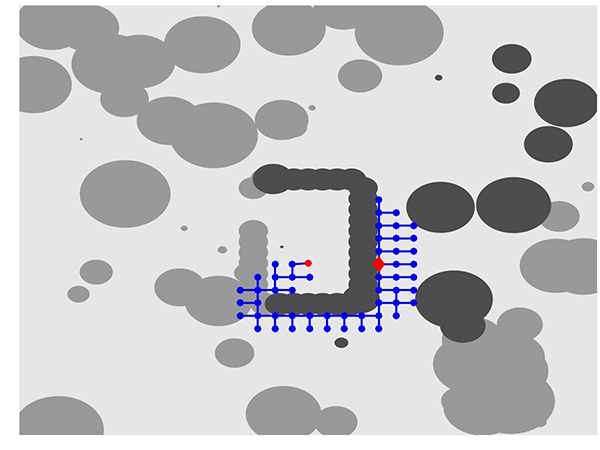} 
}
\subfigure[] { \label{fig:g14}
\includegraphics[width=0.31\columnwidth]{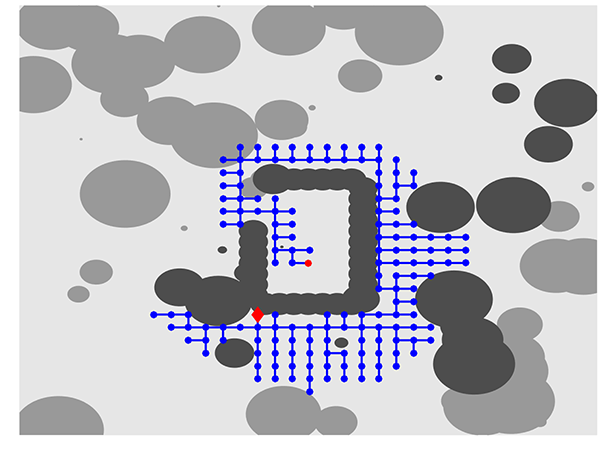} 
}
\subfigure[] { \label{fig:g15}
\includegraphics[width=0.31\columnwidth]{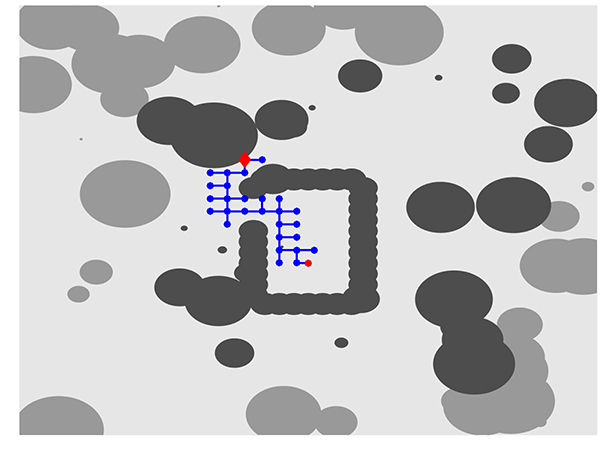} 
}
\caption{\small The graph produced in the one robot case with generating radius $l=0.03$ and light (dark) gray the undetected (detected) obstacles. The graph expands greedily towards the target. If obstacles are on the greedy direction, it searches around the obstacles and generates new nodes with potential as low as possible. (a)-(d) are graphs that cross undetected obstacles so the robot stops while moving on those graphs. With enough environment knowledge, (e) is a graph containing a true feasible path from the current initial configuration and the target.}
\label{fig:g1}
\end{figure}

\begin{figure} \centering
\subfigure[] { \label{fig:p11}
\includegraphics[width=0.31\columnwidth]{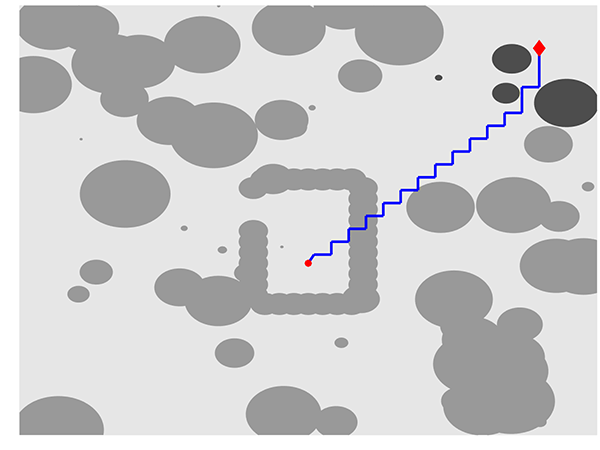} 
}
\subfigure[] { \label{fig:p12}
\includegraphics[width=0.31\columnwidth]{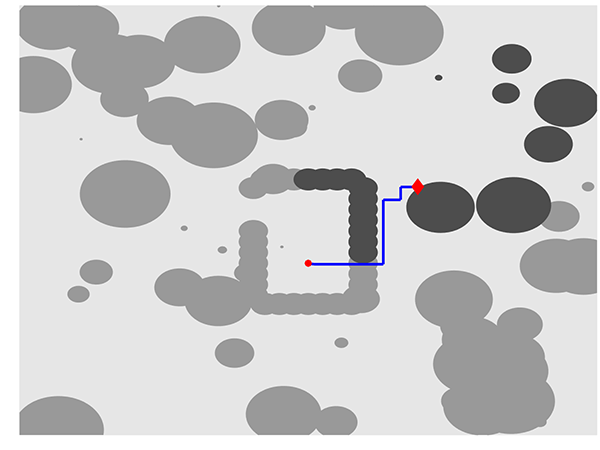} 
}
\subfigure[] { \label{fig:p13}
\includegraphics[width=0.31\columnwidth]{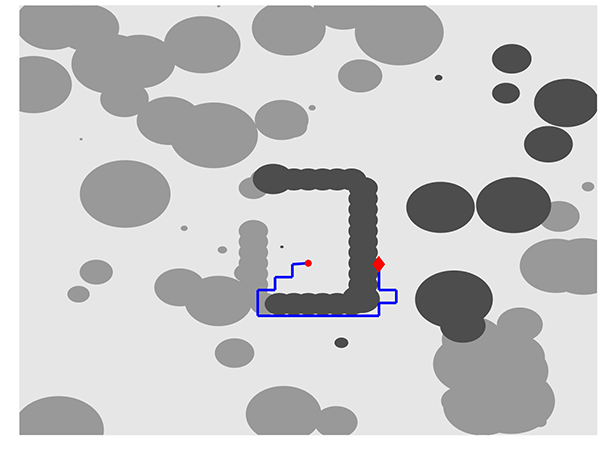} 
}
\subfigure[] { \label{fig:p14}
\includegraphics[width=0.31\columnwidth]{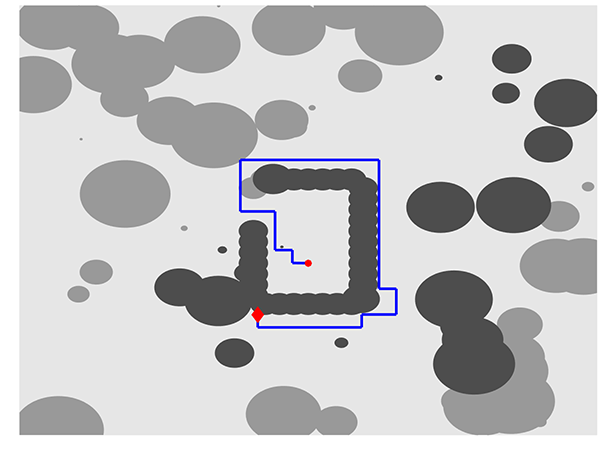} 
}
\subfigure[] { \label{fig:p15}
\includegraphics[width=0.31\columnwidth]{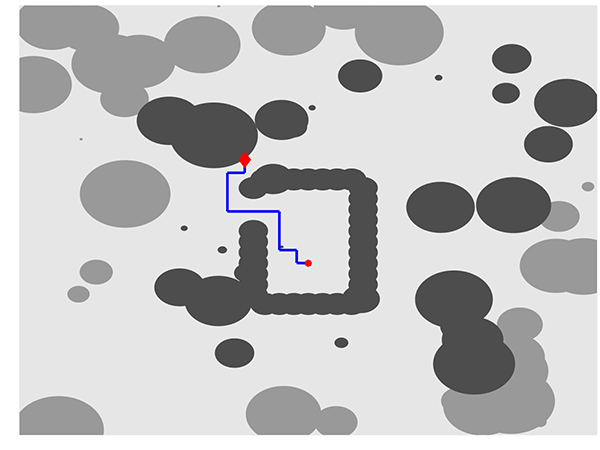} 
}
\subfigure[] { \label{fig:p16}
\includegraphics[width=0.31\columnwidth]{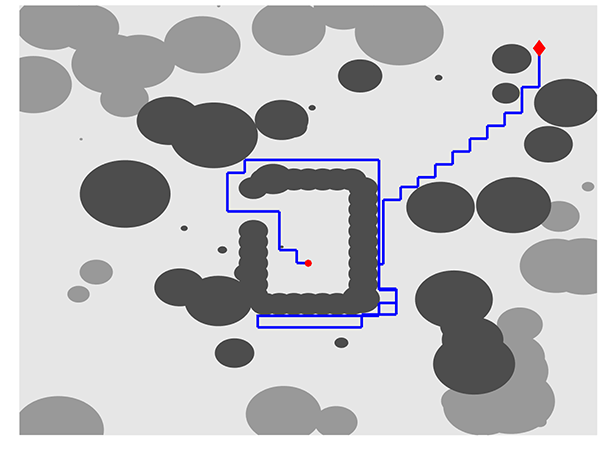} 
}
\caption{\small The paths calculated based on the results in Figure \ref{fig:g1}. (a)-(d) are middle steps that the robot stops because of the newly detected obstacles while moving and (e) is the path on which the robot get to the target. (f) gives the complete path of the robot.}
\label{fig:p1}
\end{figure}

\begin{figure} \centering
\subfigure[] { \label{fig:g21}
\includegraphics[width=0.31\columnwidth]{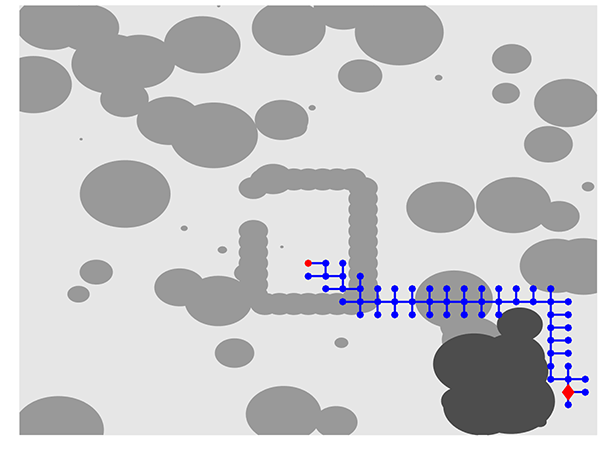} 
}
\subfigure[] { \label{fig:g22}
\includegraphics[width=0.31\columnwidth]{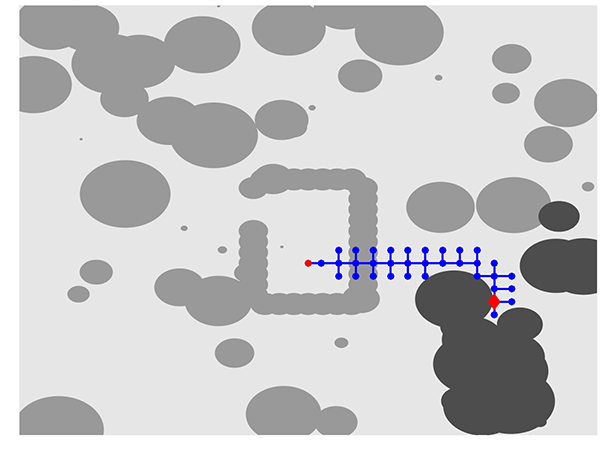} 
}
\subfigure[] { \label{fig:g23}
\includegraphics[width=0.31\columnwidth]{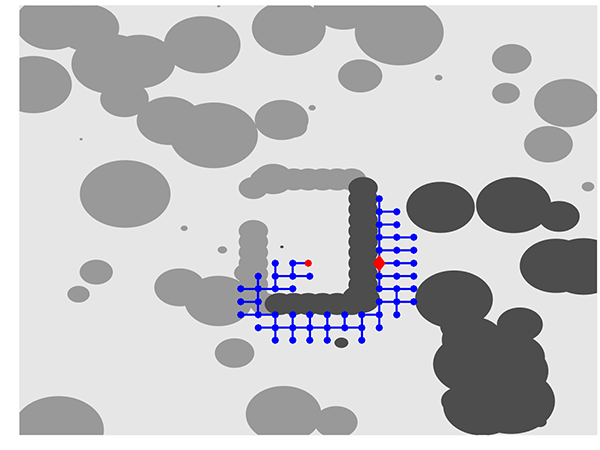} 
}
\subfigure[] { \label{fig:g24}
\includegraphics[width=0.31\columnwidth]{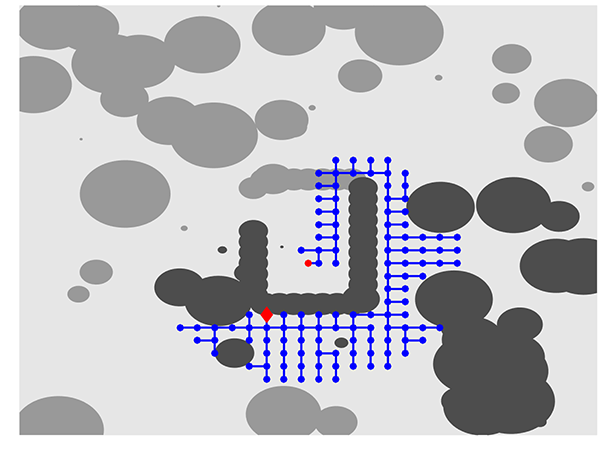} 
}
\subfigure[] { \label{fig:g25}
\includegraphics[width=0.31\columnwidth]{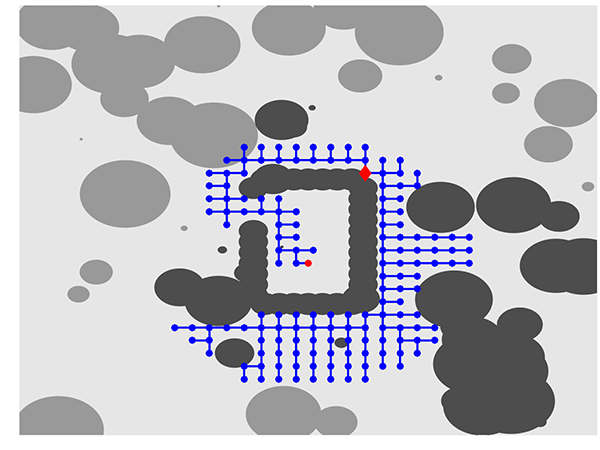} 
}
\caption{\small The graph produced under the same environment and target configuration as Figure \ref{fig:g1} but with a different initial configuration located at top right corner. The graphs search almost the same region around the central box as those in Figure \ref{fig:g1} except (a).}
\label{fig:g2}
\end{figure}

\begin{figure} \centering
\subfigure[] { \label{fig:p21}
\includegraphics[width=0.31\columnwidth]{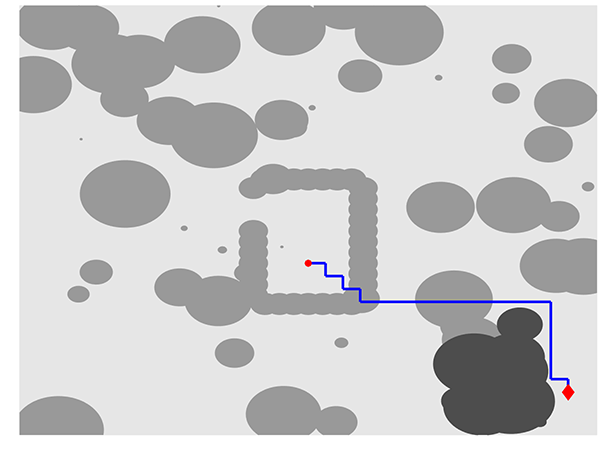} 
}
\subfigure[] { \label{fig:p22}
\includegraphics[width=0.31\columnwidth]{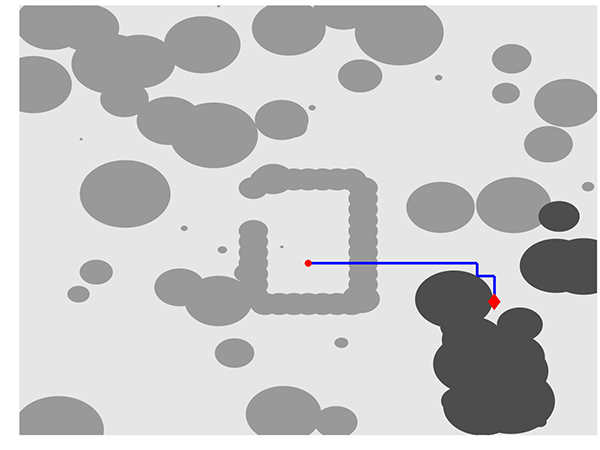} 
}
\subfigure[] { \label{fig:p23}
\includegraphics[width=0.31\columnwidth]{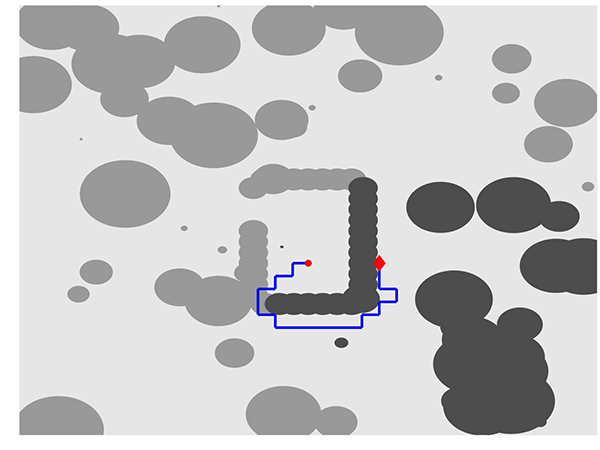} 
}
\subfigure[] { \label{fig:p24}
\includegraphics[width=0.31\columnwidth]{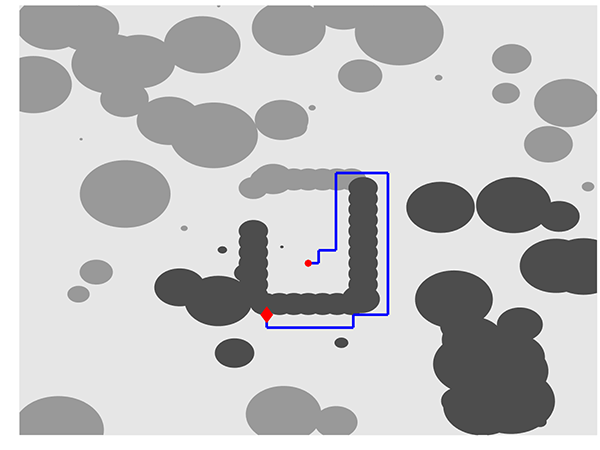} 
}
\subfigure[] { \label{fig:p25}
\includegraphics[width=0.31\columnwidth]{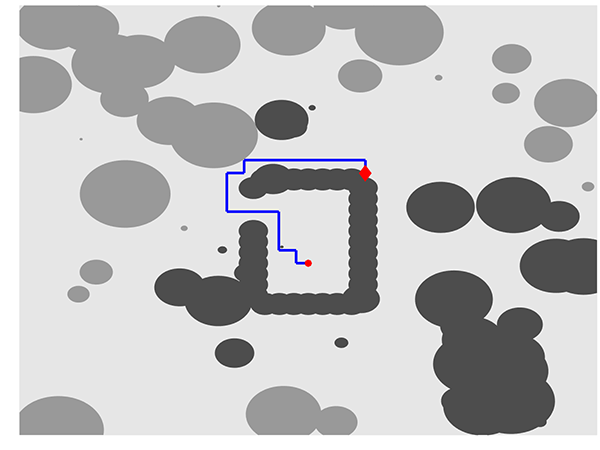} 
}
\subfigure[] { \label{fig:p26}
\includegraphics[width=0.31\columnwidth]{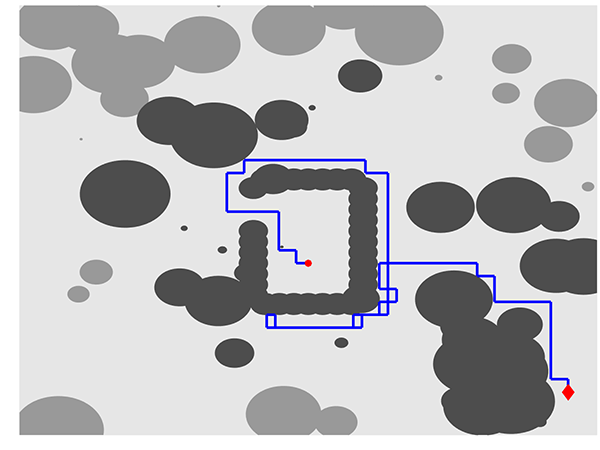} 
}
\caption{\small The paths calculated based on the results in Figure \ref{fig:g2}. (a)-(d) are middle steps that the robot stops because of the newly detected obstacles while moving and (e) is the path on which the robot get to the target. (f) gives the complete path of the robot. The paths are similar to those in Figure \ref{fig:p1} in spite of different initial configuration.}
\label{fig:p2}
\end{figure}

\begin{figure} \centering
\subfigure[] { \label{fig:g31}
\includegraphics[width=0.31\columnwidth]{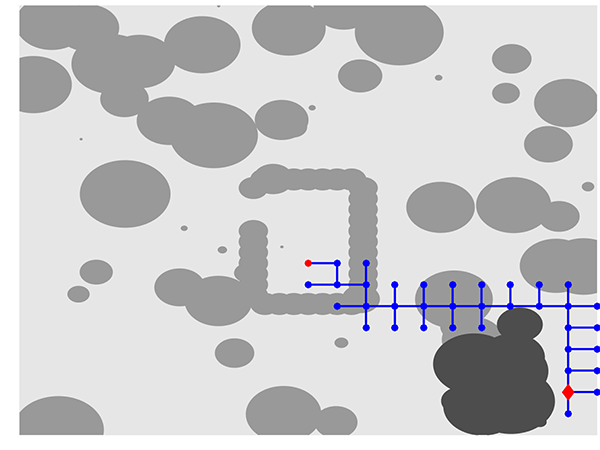} 
}
\subfigure[] { \label{fig:g32}
\includegraphics[width=0.31\columnwidth]{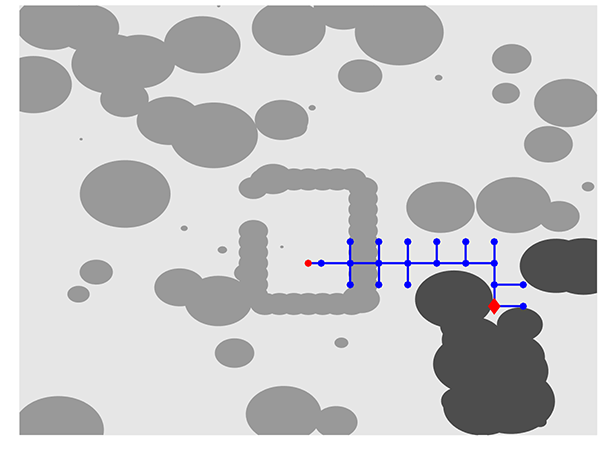} 
}
\subfigure[] { \label{fig:g33}
\includegraphics[width=0.31\columnwidth]{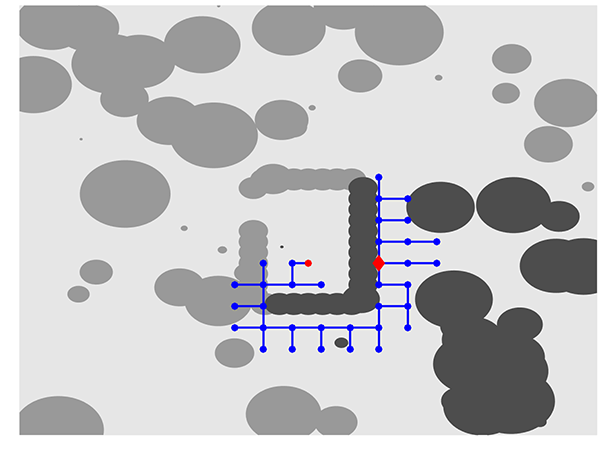} 
}
\subfigure[] { \label{fig:g34}
\includegraphics[width=0.31\columnwidth]{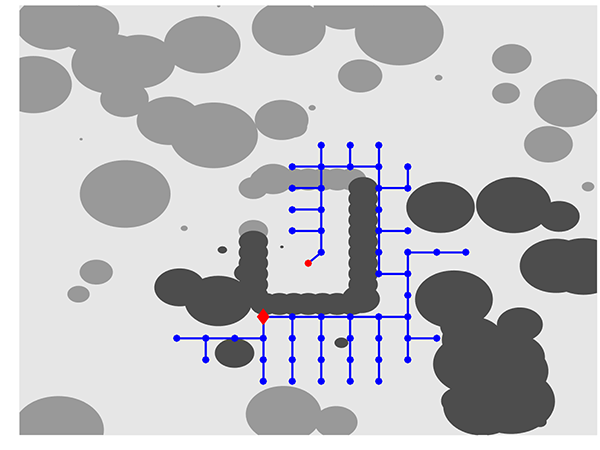} 
}
\subfigure[] { \label{fig:g35}
\includegraphics[width=0.31\columnwidth]{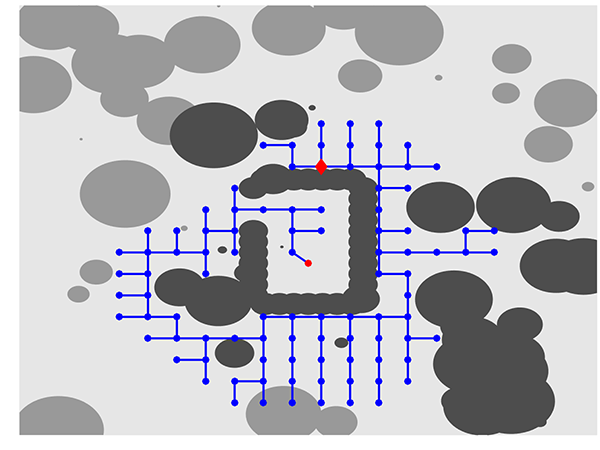} 
}
\caption{\small The graph produced under the same environment, initial and target configuration as Figure \ref{fig:g2} but with a larger generating radius $l=0.05$. The robot can no longer move through the original path, so to get to the target, it finds a different path that contains points with higher potential.}
\label{fig:g3}
\end{figure}

\begin{figure} \centering
\subfigure[] { \label{fig:p31}
\includegraphics[width=0.31\columnwidth]{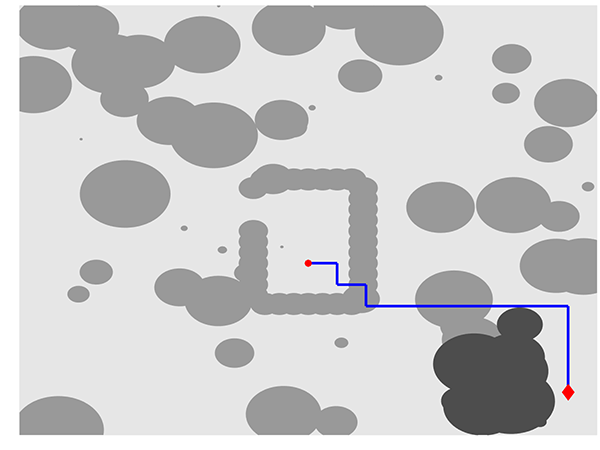} 
}
\subfigure[] { \label{fig:p32}
\includegraphics[width=0.31\columnwidth]{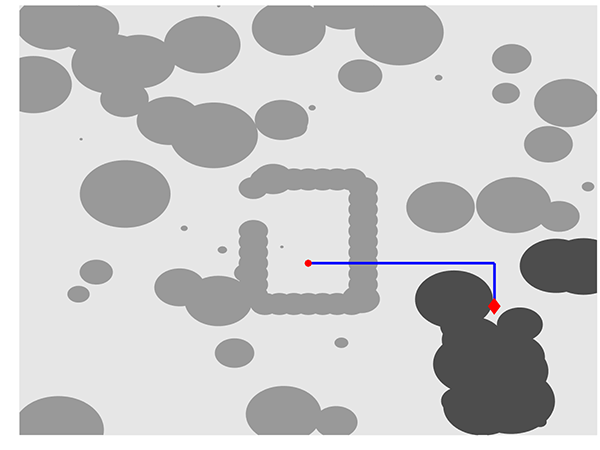} 
}
\subfigure[] { \label{fig:p33}
\includegraphics[width=0.31\columnwidth]{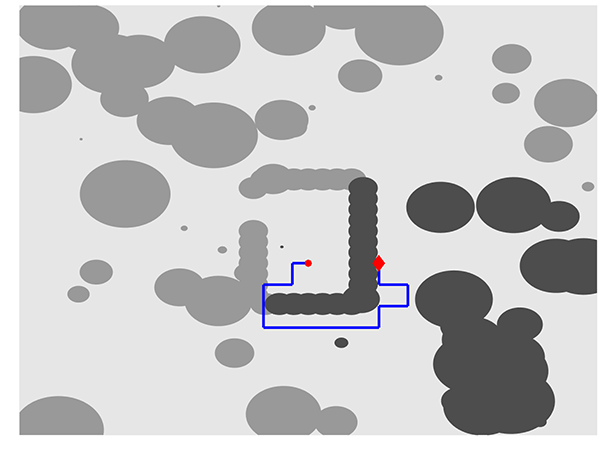} 
}
\subfigure[] { \label{fig:p34}
\includegraphics[width=0.31\columnwidth]{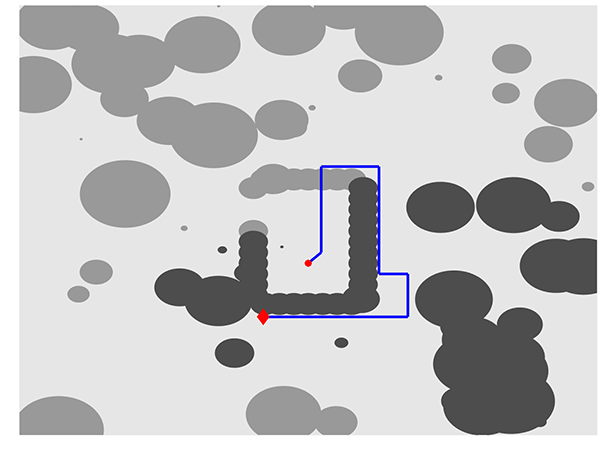} 
}
\subfigure[] { \label{fig:p35}
\includegraphics[width=0.31\columnwidth]{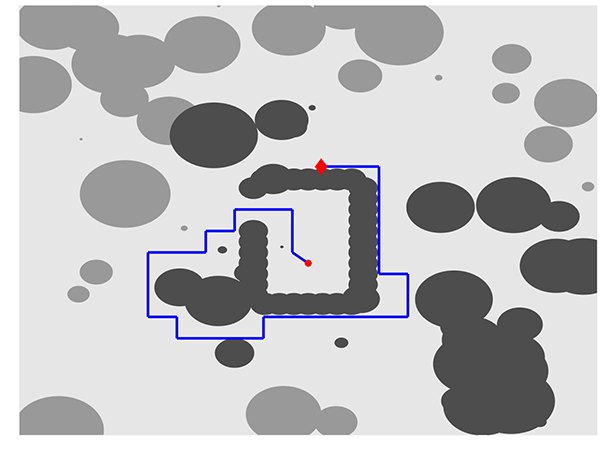} 
}
\subfigure[] { \label{fig:p36}
\includegraphics[width=0.31\columnwidth]{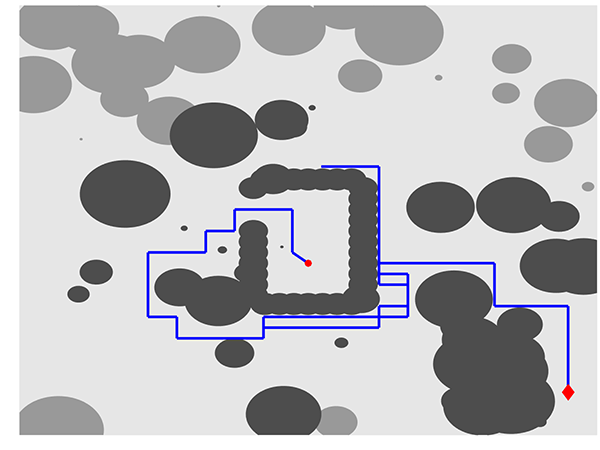} 
}
\caption{\small The paths calculated based on the results in Figure \ref{fig:g3}. (a)-(d) are middle steps that the robot stops because of the newly detected obstacles while moving and (e) is the path on which the robot get to the target. (f) gives the complete path of the robot.}
\label{fig:p3}
\end{figure}

\begin{figure} \centering
\subfigure[] { \label{fig:g41}
\includegraphics[width=0.31\columnwidth]{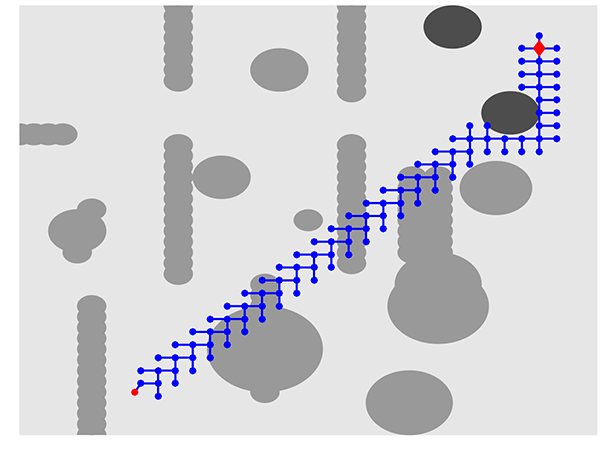} 
}
\subfigure[] { \label{fig:g42}
\includegraphics[width=0.31\columnwidth]{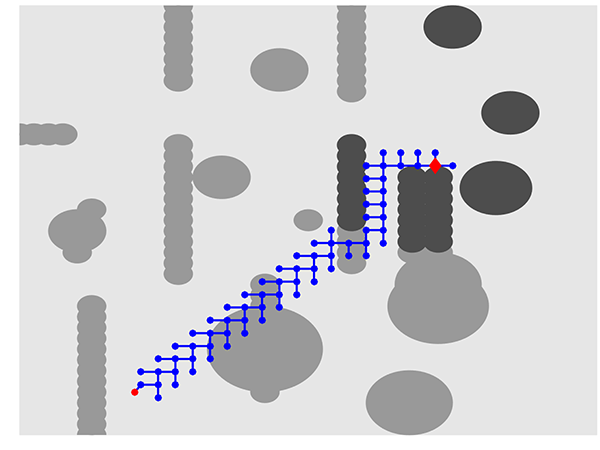} 
}
\subfigure[] { \label{fig:g43}
\includegraphics[width=0.31\columnwidth]{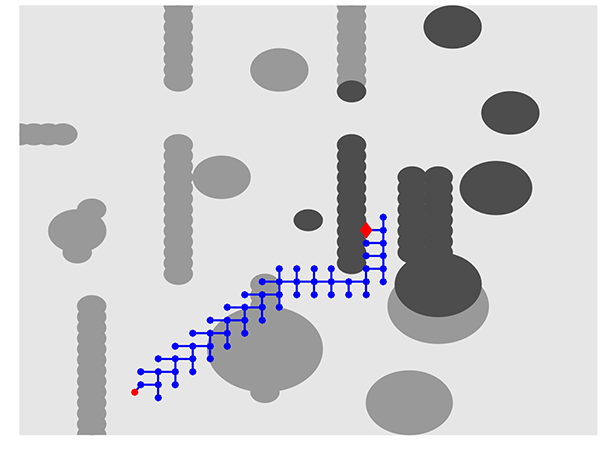} 
}
\subfigure[] { \label{fig:g44}
\includegraphics[width=0.31\columnwidth]{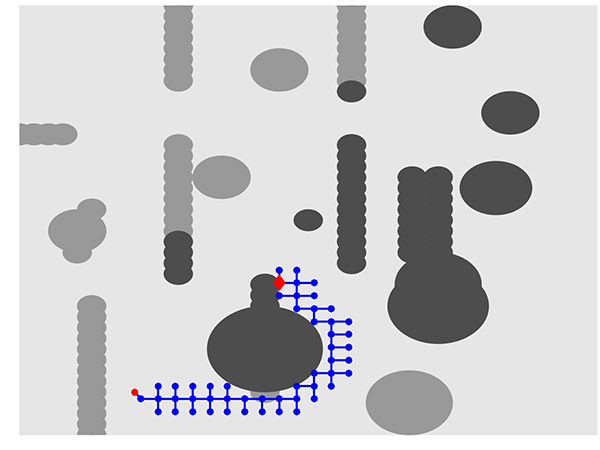} 
}
\subfigure[] { \label{fig:g45}
\includegraphics[width=0.31\columnwidth]{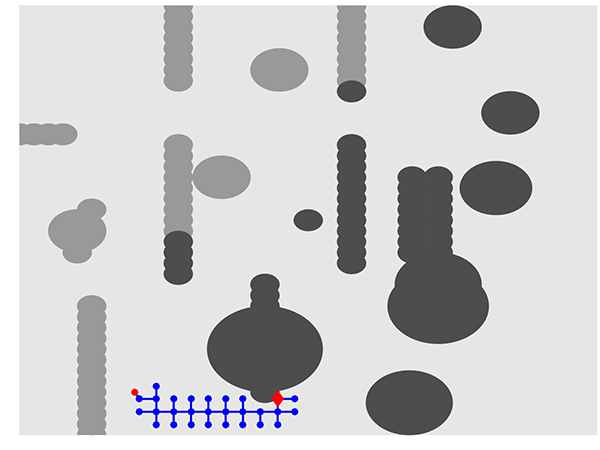} 
}
\caption{\small The generated graphs with $l=0.03$ with one robot moving in a different environment. In that environment, a narrow corridor exists and the graph is able to get through from it.}
\label{fig:g4}
\end{figure}

\begin{figure} \centering
\subfigure[] { \label{fig:p41}
\includegraphics[width=0.31\columnwidth]{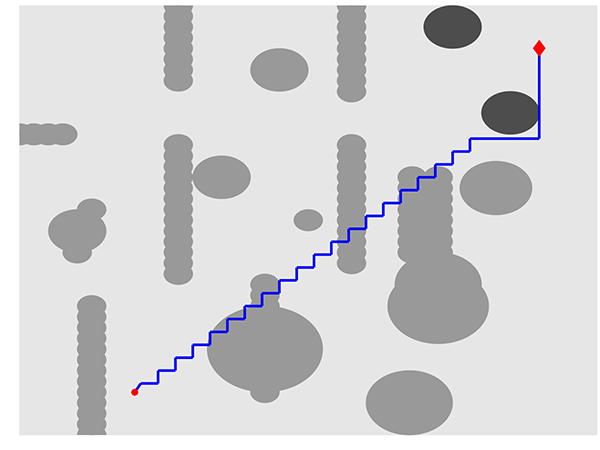} 
}
\subfigure[] { \label{fig:p42}
\includegraphics[width=0.31\columnwidth]{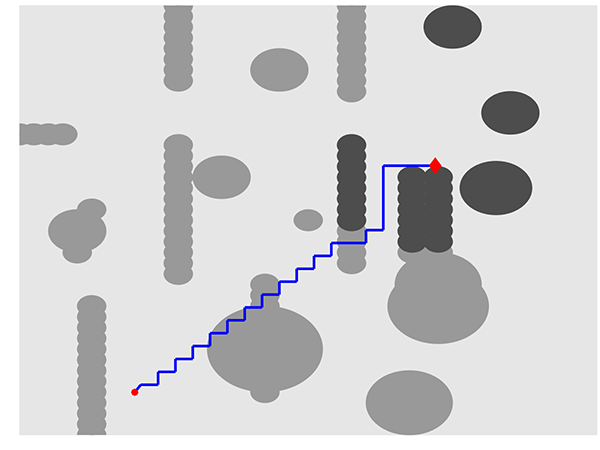} 
}
\subfigure[] { \label{fig:p43}
\includegraphics[width=0.31\columnwidth]{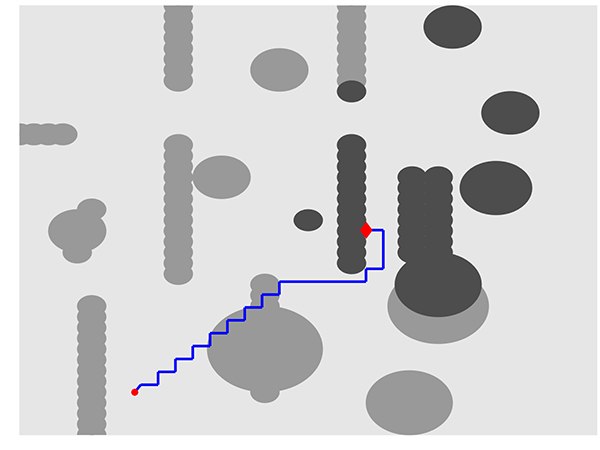} 
}
\subfigure[] { \label{fig:p44}
\includegraphics[width=0.31\columnwidth]{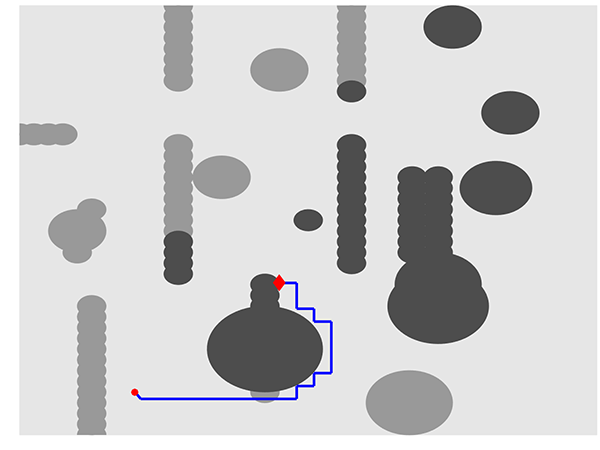} 
}
\subfigure[] { \label{fig:p45}
\includegraphics[width=0.31\columnwidth]{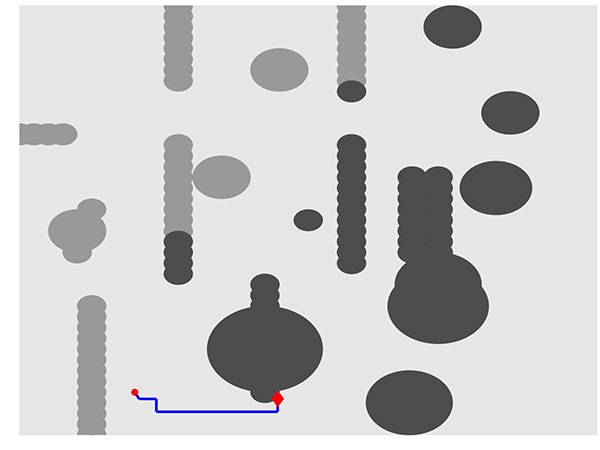} 
}
\subfigure[] { \label{fig:p46}
\includegraphics[width=0.31\columnwidth]{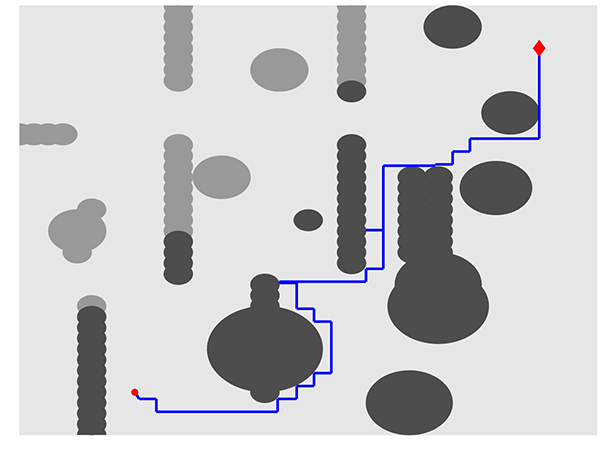} 
}
\caption{\small The paths calculated based on the results in Figure \ref{fig:g4}. (a)-(d) are middle steps that the robot stops because of the newly detected obstacles while moving and (e) is the path on which the robot get to the target. (f) gives the complete path of the robot.}
\label{fig:p4}
\end{figure}

\subsection{High dimensional cases}
In the next few examples, we calculate the paths for several multiple-agent systems. In addition to the constraints imposed by the obstacles, we also enforce that the robots cannot be too close or too far away from each other. In our examples, we set that any two robots must keep their distance between $0.03$ and $0.13$ when moving in the unknown environment. Besides, the link between each pair of robots cannot be blocked by obstacles. All examples are accompanied by youtube videos, with the web links given in the footnotes. In Figure \ref{fig:double}, a 2-robot ($4$ dimensional) system is used. From the pictures, we observe that the robots move up until trapped, because they always choose the fastest potential-decaying direction in the known environment. Then they retreat back and eventually find the correct way\footnote{video at https://youtu.be/6wKe7wnlG58}. The next example is a 3-robot system ($6$ dimensional problem)  shown in Figure \ref{fig:triple}. The environment allows a direct path from the initial to the target. The algorithm immediately finds this direct path and avoids taking other sideways\footnote{video at https://youtu.be/q84VhKfYUyo}. Finally, a 5-robot system is shown in Figure \ref{fig:five} to demonstrate that the algorithm is capable of solving a $10$-dimensional problem with complicated environment\footnote{video at https://youtu.be/H5lfzAYbfRA}, in which the robots need to twist so that they can successfully pass through the gaps between obstacles. Another example of a 10-robot system (with $20$ dimensional configuration space) moving in an unknown environment can also be found online\footnote{video at https://youtu.be/gVinTsto7pE}. We would like to note that it takes about 1 minute to finish the entire computation for this 10-robot system by using Matlab on a regular laptop (a Macbook Pro with  2.9 GHz Intel Core i5 CPU) with no particular effort being made to optimize the implementation of the algorithm.

In addition to the displayed paths, we illustrate the performance of the algorithms by using several other measurements. Table \ref{tab:numofvertices} shows the collective information about the number of vertices in graphs generated during the procedure. In this table, ``Figure'' column indicates the corresponding figures of the examples, ``num of robots'' represents the number of robots, and ``$l$'' is the generating radius in each experiment. To show the efficiency of our algorithms, we use the average number of nodes in the graphs, represented in ``avg'', and the maximum number of vertices amongst all graphs which is listed in the column ``max''. We can see that as the dimension of the problem (indicated in the ``dim'' column) increases, the size of the graphs increases, but not as fast as the exponential growth with respect to the dimensionality. 

Furthermore, we observe that the algorithm generates the particular graph with the maximum number of vertices when the robots are trapped in local minimizer (shown in ``trapped'' column). In the $6$ dimensional example, the robots do not encounter any local minimizer, which results in much fewer vertices. In fact, the sizes of graphs are smaller than those in the four-dimensional case. We also observe that the number of graphs generated by the algorithm (``num of G'' column) highly relies on the environments and the choices of the orthonormal bases. Thus it is not used as a criterion to judge the efficiency of the algorithm. Overall, our algorithm is relatively efficient especially when dealing with high dimensional problems. The most costly part is to escape the local traps, and we propose a couple of strategies to improve the performance in the next section.

\begin{table}
    \centering
    \begin{tabular}{|c|c|c|c|c|c|c|c|}\hline
        Figure & num of robots & $l$ & dim & avg & max & trapped & num of G\\\hline
        Figure \ref{fig:g1},\ref{fig:p1} & 1 & 0.03 & 2 & 60.5 & 150 & yes & 5\\\hline
        Figure \ref{fig:g2},\ref{fig:p2} & 1 & 0.03 & 2 & 81.2 & 149 & yes & 5\\\hline
        Figure \ref{fig:g3},\ref{fig:p3} & 1 & 0.05 & 2 & 47 & 94 & yes & 5\\\hline
        Figure \ref{fig:g4},\ref{fig:p4} & 1 & 0.03 & 2 & 59.2 & 104 & yes & 5\\\hline
        Figure \ref{fig:double} & 2 & 0.03 & 4 & 606.7 & 3433 & yes & 9\\\hline
        Figure \ref{fig:triple} & 3 & 0.02 & 6 & 632.4 & 1183 & no & 5\\\hline
        Figure \ref{fig:five} & 5 & 0.03 & 10 & 2178.4 & 6938 & yes & 7\\\hline
    \end{tabular}
    \caption{\small Information about number of vertices for the examples. This table shows that the number of nodes is increasing as the dimension of the problem increases but not as fast as exponential growth. And the number of nodes generated keeps small if the robots are not trapped in local minimum.}
    \label{tab:numofvertices}
\end{table}

\begin{table}
    \centering
    \begin{tabular}{|c|c|c|c|c|c|}\hline
        num of robots & $l$ & dim & avg & max & num of G\\\hline
        2 & 0.03 & 4 & 212.4 & 295 & 8\\\hline
        5 & 0.03 & 10 & 1307 & 2492 & 7\\\hline
    \end{tabular}
    \caption{\small Information about number of vertices for the examples with escaping local traps algorithm. As we can see, the algorithm generates much fewer vertices compared to Table \ref{tab:numofvertices}. And in the 5 robots system, the largest graph is no longer appears at local trap, instead the first generated graph is the with most nodes because of the physical distance from initial to target.}
    \label{tab:fewnumofvertices}
\end{table}

\begin{figure} \centering
\subfigure[] { \label{fig:double1}
\includegraphics[width=0.31\columnwidth]{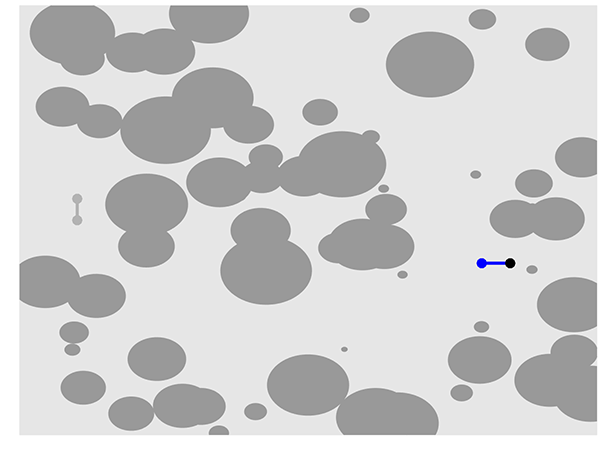} 
}
\subfigure[] { \label{fig:double2}
\includegraphics[width=0.31\columnwidth]{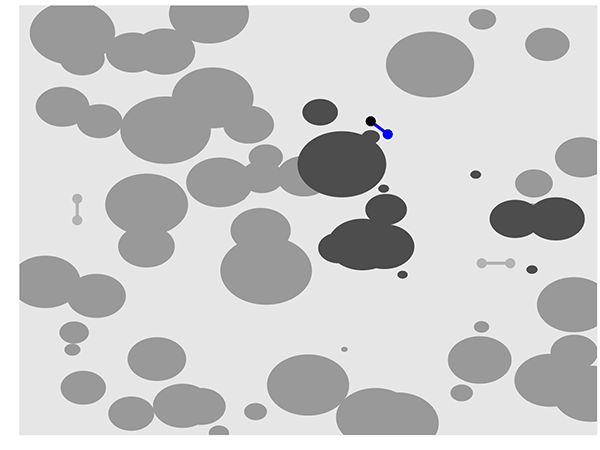} 
}
\subfigure[] { \label{fig:double3}
\includegraphics[width=0.31\columnwidth]{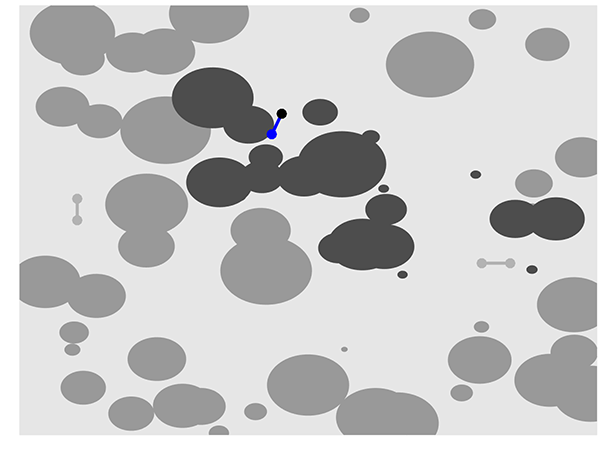} 
}
\subfigure[] { \label{fig:double4}
\includegraphics[width=0.31\columnwidth]{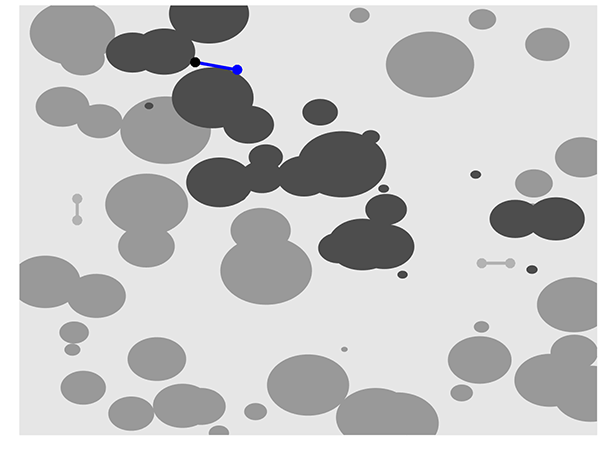} 
}
\subfigure[] { \label{fig:double5}
\includegraphics[width=0.31\columnwidth]{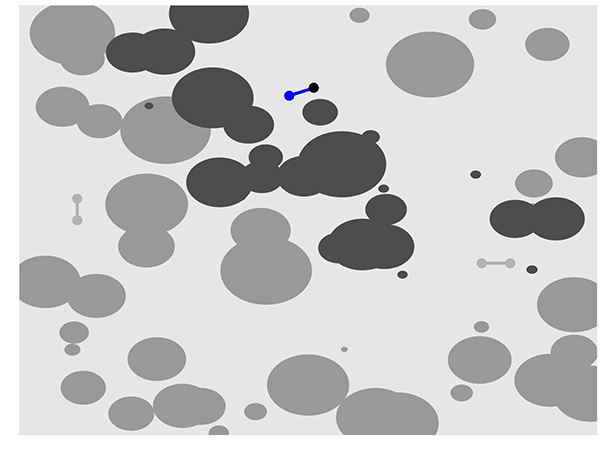} 
}
\subfigure[] { \label{fig:double6}
\includegraphics[width=0.31\columnwidth]{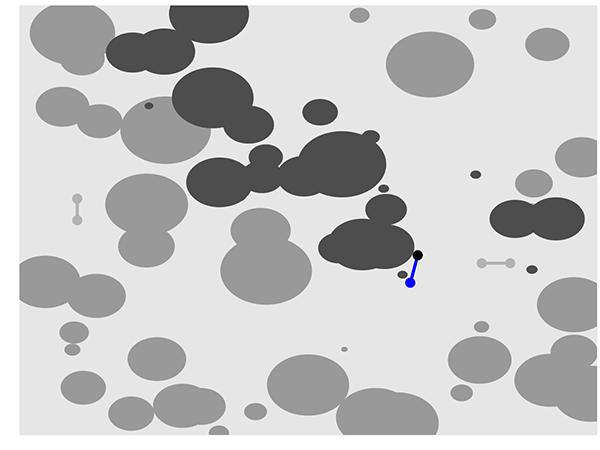} 
}
\subfigure[] { \label{fig:double7}
\includegraphics[width=0.31\columnwidth]{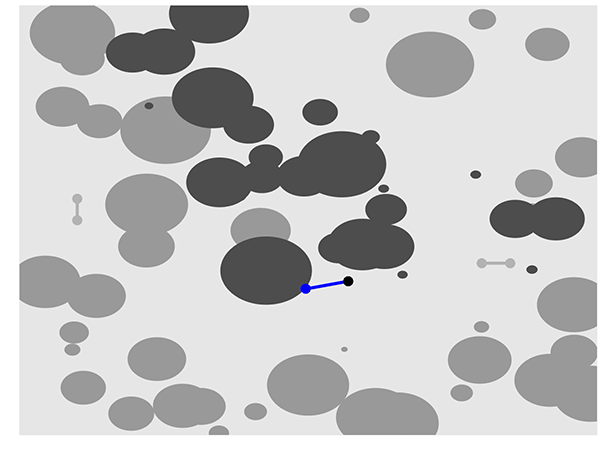} 
}
\subfigure[] { \label{fig:double8}
\includegraphics[width=0.31\columnwidth]{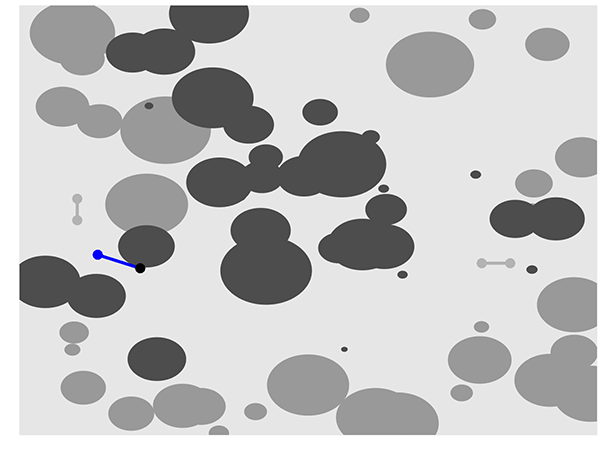} 
}
\subfigure[] { \label{fig:double9}
\includegraphics[width=0.31\columnwidth]{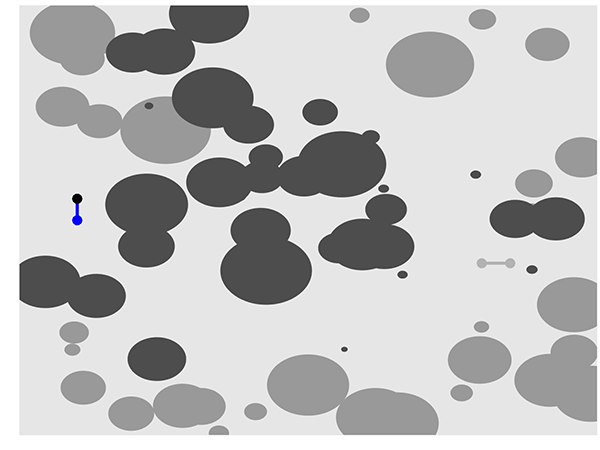} 
}
\caption{\small moving path for two robots with $l=0.03$ with linking of each two robots not blocked by obstacles. Since the environment is unknown at first, the robots choose to move from the upper side without knowing it is a dead end. After recognizing they are trapped by obstacles, the robots move down to finally find a way to the target.}
\label{fig:double}
\end{figure}

\begin{figure} \centering
\subfigure[] { \label{fig:triple1}
\includegraphics[width=0.31\columnwidth]{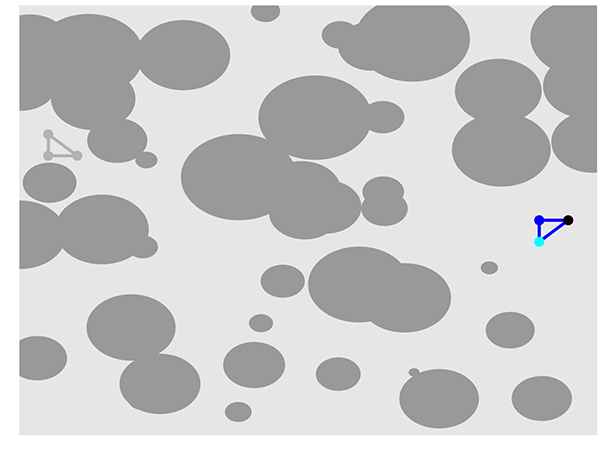} 
}
\subfigure[] { \label{fig:triple2}
\includegraphics[width=0.31\columnwidth]{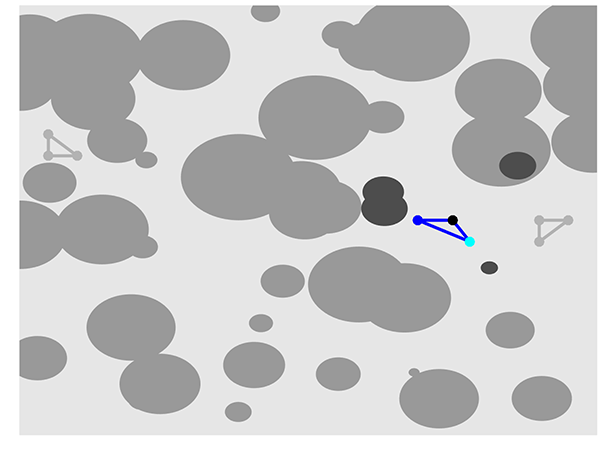} 
}
\subfigure[] { \label{fig:triple3}
\includegraphics[width=0.31\columnwidth]{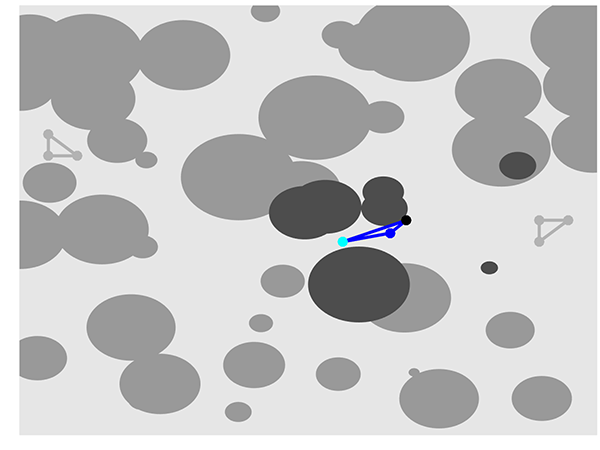} 
}
\subfigure[] { \label{fig:triple4}
\includegraphics[width=0.31\columnwidth]{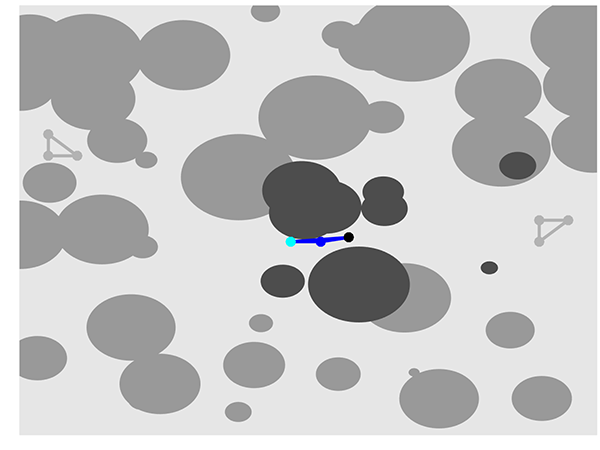} 
}
\subfigure[] { \label{fig:triple5}
\includegraphics[width=0.31\columnwidth]{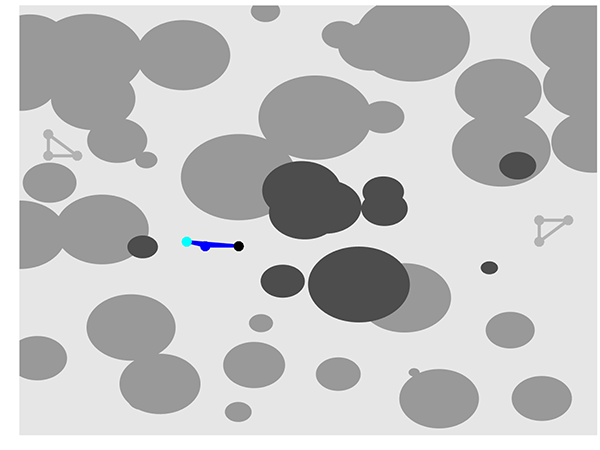} 
}
\subfigure[] { \label{fig:triple6}
\includegraphics[width=0.31\columnwidth]{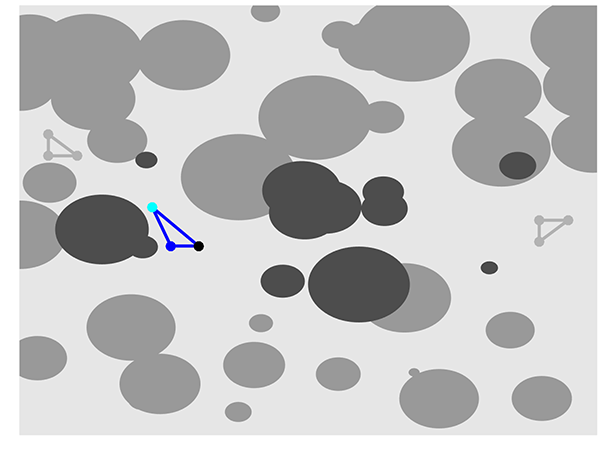} 
}
\subfigure[] { \label{fig:triple7}
\includegraphics[width=0.31\columnwidth]{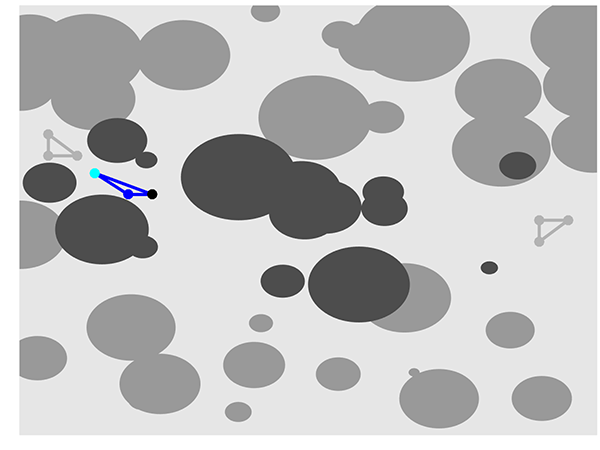} 
}
\subfigure[] { \label{fig:triple8}
\includegraphics[width=0.31\columnwidth]{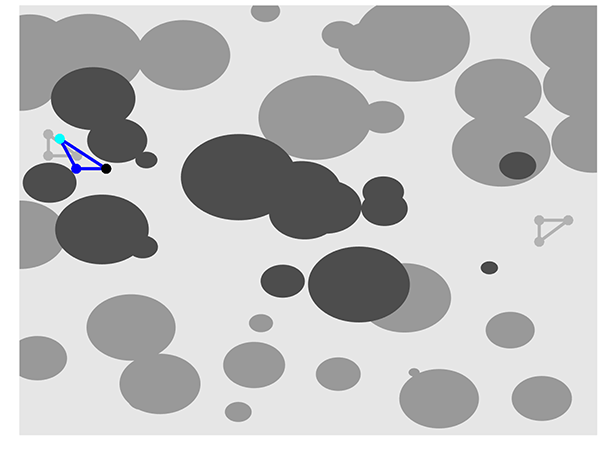} 
}
\subfigure[] { \label{fig:triple9}
\includegraphics[width=0.31\columnwidth]{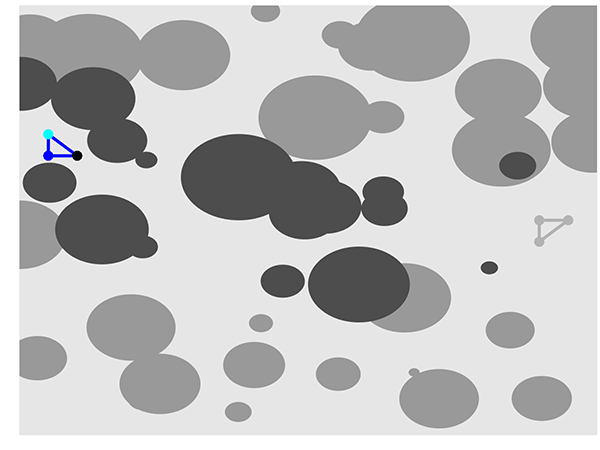} 
}
\caption{\small moving path for three robots with $l=0.02$ with linking of each two robots not blocked by obstacles. There is a direct way to get to the target and the robots successfully find it without getting into traps because the algorithm is locally greedy.}
\label{fig:triple}
\end{figure}

\begin{figure} \centering
\subfigure[] { \label{fig:five1}
\includegraphics[width=0.31\columnwidth]{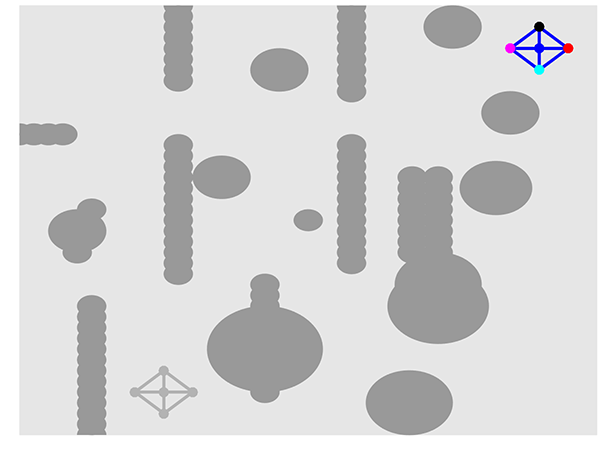} 
}
\subfigure[] { \label{fig:five2}
\includegraphics[width=0.31\columnwidth]{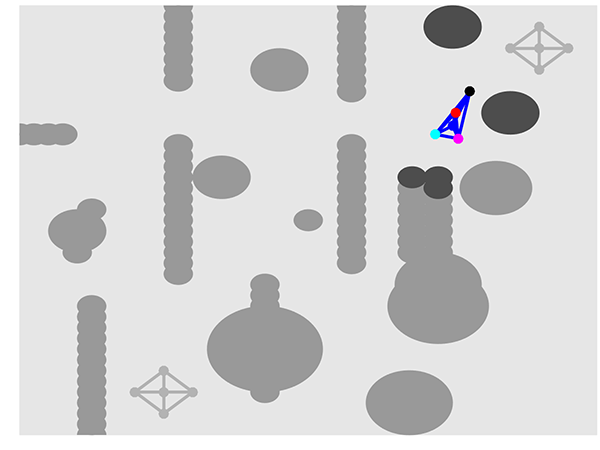} 
}
\subfigure[] { \label{fig:five3}
\includegraphics[width=0.31\columnwidth]{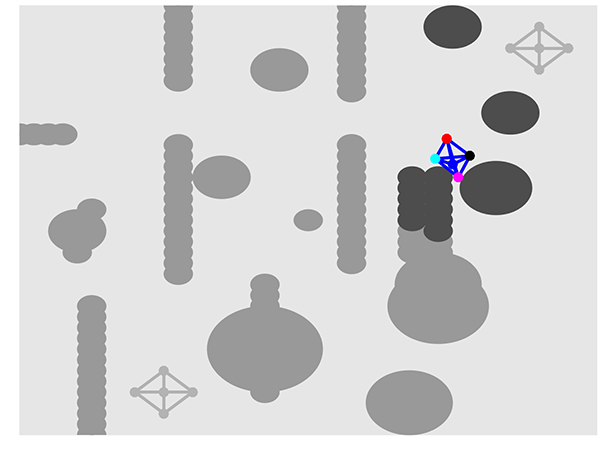} 
}
\subfigure[] { \label{fig:five4}
\includegraphics[width=0.31\columnwidth]{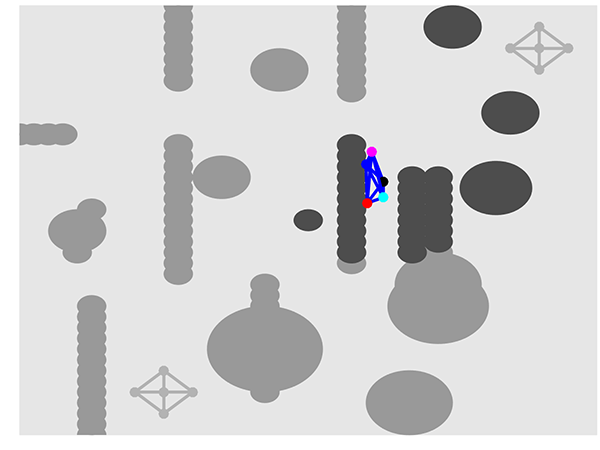} 
}
\subfigure[] { \label{fig:five5}
\includegraphics[width=0.31\columnwidth]{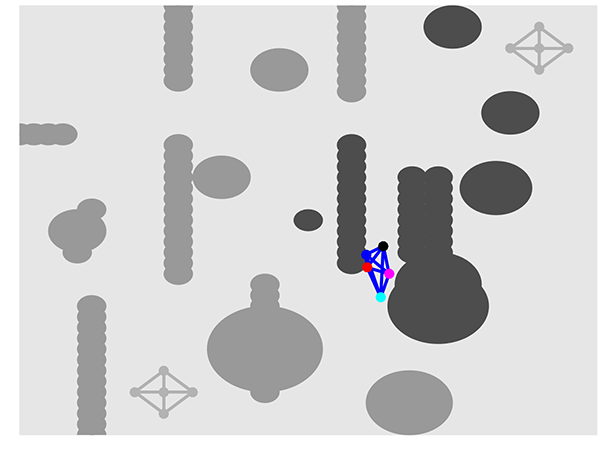} 
}
\subfigure[] { \label{fig:five6}
\includegraphics[width=0.31\columnwidth]{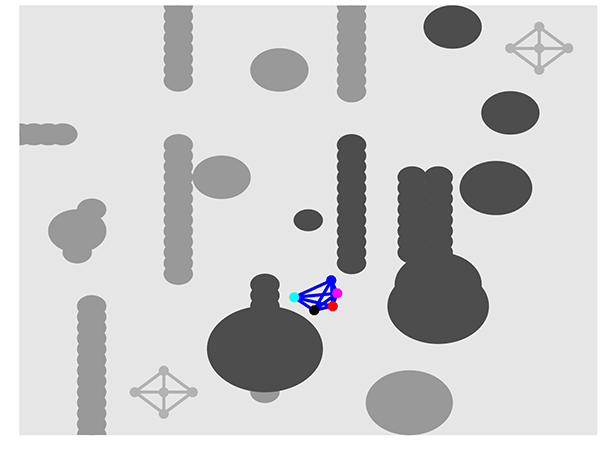} 
}
\subfigure[] { \label{fig:five7}
\includegraphics[width=0.31\columnwidth]{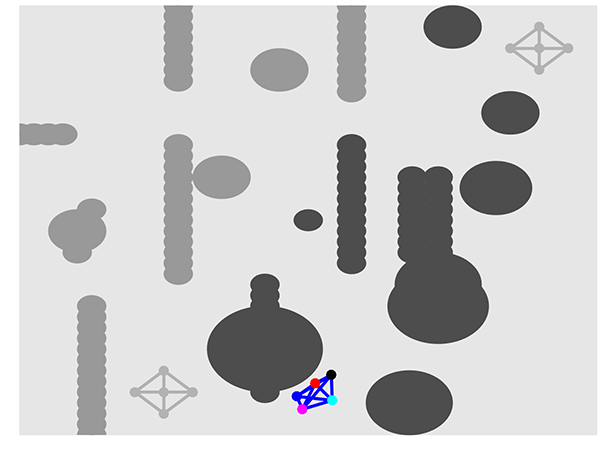} 
}
\subfigure[] { \label{fig:five8}
\includegraphics[width=0.31\columnwidth]{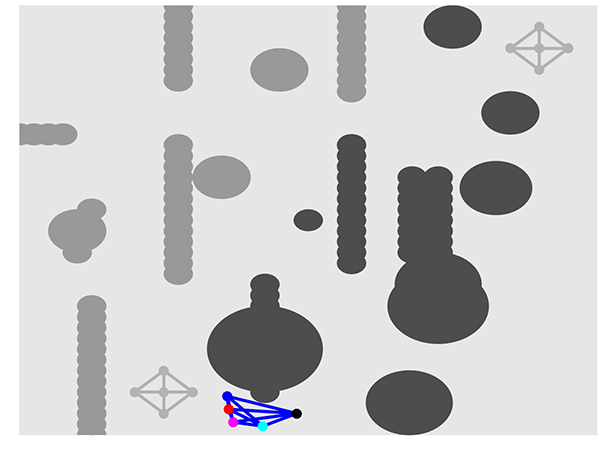} 
}
\subfigure[] { \label{fig:five9}
\includegraphics[width=0.31\columnwidth]{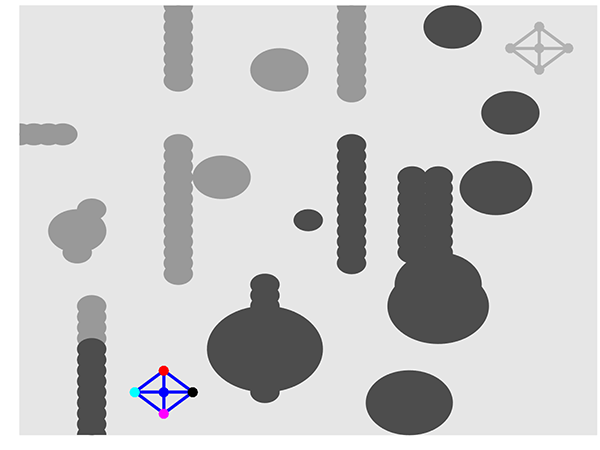} 
}
\caption{\small moving path for five robots with $l=0.03$ with linking of each two robots not blocked by obstacles. In this example, the robots change their shape to pass the narrow corridor and after getting into the local trap, they search around their way and move down to reach the destination.}
\label{fig:five}
\end{figure}

\section{Escaping Local Traps Rapidly} \label{speed_up}
From the experiments conducted, we notice that the number of generated vertices increases when the robots are trapped in local minimizers, and the number of nodes at each local trap is proportional to the volume of the trap. This is not a surprise because the nearly exhausted search is used to escape local traps. In order to reduce cost, we present two different strategies. Before doing so, we need to identify local minimizers and define their trap regions. We say that a node point $x$ is a local minimizer if no lower-potential points around $x$ can be generated by Algorithm \ref{alg:2}. Since a local trap can only be created by constraints because of the convexity of the potential function, we define the trap region as a set enclosed by the boundary of local constraints and the level curve (hyper-surface in high dimensional problems) of potential function in the following way:
\begin{align}
\mathcal{L}^*(x)=\sup_c\left\lbrace\mathcal{L}(c):\mathcal{L}(a)\text{ is closed for all }a\in [c_0,c]\right\rbrace,
\label{equ:localtrap}
\end{align}
where $c_0=p(x)$ and $\mathcal{L}(c)$ is the closed set containing $x$ with
\[
\partial\mathcal{L}(c)\subset\left(\partial\mathcal{O}_c\cup\{x:p(x)=c\}\right)
\]

When a local trap is identified, our goal is to find points located on the intersection of obstacle boundary and the level curve (surface) given by (\ref{equ:localtrap}) as quickly as possible, and then continue to generate vertices outside of the trap region. Here we introduce two different dimension reduction methods to achieve this goal.

\textbf{Keep the robot near obstacles}: We know that some of the constraints in $\phi,\psi$ must be nearly activated around the local minimizer $x\in\partial\mathcal{O}$. For the ease of presentation, we denote those nearly activated constraints as $g_i(x) \leq \epsilon$ for some integer $i$ where $\epsilon$ is a small positive number and $g_i$ is some $\phi_j$ or $\psi_k$. For example, it can be chosen as $\epsilon=\min(\min_{k=1,2,\cdots,k_1}\phi_k(x), \min_{k=1,2,\cdots,k_2}\psi_k(x))$. We modify the algorithm so that it only generates points satisfying the inequalities, that is, only add points $v$ such that $g_i(v) \leq \epsilon$ to $V$ until there is a vertex $x\in\mathcal{E}$ with
\[
\mathcal{E}=\left\lbrace x\in B(y,\sqrt{2}l):\exists z\in\{c\in(x\pm\mathcal{N})\backslash V:p(c)<p(x)\}\backslash\mathcal{O}\right\rbrace,
\]
where $\mathcal{N}$ is the substituted set of orthonormal basis for $N$ in the subspace, $V$ is the current vertex set, $y=\arg\max_{g_i(z)\leq\epsilon}p(z)$ and $g_i$ is the constraint mentioned above. After this point, we go back to Algorithm \ref{alg:2}. With the same assumptions as stated in Theorems \ref{thm1} and \ref{thm2}, we can show that this method find the path in finite steps, and the proof of the convergence follows the same arguments as provided in Section \ref{convergence}.  Since $g_i(x)=\epsilon$ is continuous locally, the modified search is conducted in a low-dimensional subspace if $\epsilon$ is chosen appropriately.

\textbf{Fix the shape formed by robots}: A different way to get out of the local traps is to introduce a set of new constraints $\{h_1(x)=0,\cdots,h_k(x)=0\}$ on the robots so that they restrict the graph generation in a low dimensional subspace $\hat{\Omega}$. For example, one may fix the pairwise distance between robots, so $h_i(x) = 0$ indicates that the distance between a certain pair of robots is a given value. In 2-D or 3-D workspace, those restrictions often lead to a fixed shape formed by the robots. Each $h_i$ reduces the search dimension by one because the new vertices added to $V$ must satisfy $h_i(x)=0$. Similar to the previous strategy, we stop this procedure when a vertex $x\in\mathcal{E}$ is generated, which indicates the robots moved out of the known local trap. On the other hand, it is possible that after adding new constraints, there is no feasible way to move out. In this case, no new vertex can be generated in $V$, then we remove one of the added constraints,  and continue with the graph generating algorithm in a subspace which is one dimension higher than the previous subspace. The procedure is repeated if necessary. For this method, if we further assume that there is a feasible tube in the low dimensional subspace defined by all constraints, including the added ones $h_i = 0$, we can use the same proof to show its convergence in a finite number of steps. In this paper, we implement this dimension reduction strategy in our high dimensional examples. 

In the two and five robots cases demonstrated in Section \ref{example}, we fix the distance between each pair of robots when a local minimizer is encountered. To compare the results, we carry out several new experiments, in which all set-ups including initial and target configurations, the obstacles and all parameters are the same. The final path and how the robots move can be found in videos\footnote{video for two robots with the improved algorithm at https://youtu.be/od5fmuo8cR8 and video for five at https://youtu.be/vVHThxmtmf8}. The information on the generated graphs is displayed in Table \ref{tab:fewnumofvertices}, where we can see that the number of vertices decreases significantly. In the 5 robots case, the largest graph is no longer produced at local traps. Instead, the first generated graph contains more nodes because of the long distance from initial to target configurations. In our examples, we observe that nodes needed around the local minimizers are reduced from $O(\alpha^n)$ to $O(\alpha^2)$, where $\alpha$ is the edge length assuming the local trap is a square and $n$ is the dimension of $\Omega$. 

We also observe a common feature in all examples: the environment is not entirely explored, and the generated graphs are greedily expanding towards the target configuration. This special feature is not by accident. In fact, it is determined by the Fokker-Planck equation in optimal transport theory. We give a thorough discussion on their connections in the next section.

\section{Relation to FPE and Optimal Transport} \label{FPE}
The design of the graph generating algorithm, Algorithm \ref{alg:2}, is inspired by the evolution of FPE, which determines a region $\mathcal{R}_f$ where the search is conducted. In this section, we describe in detail on how the region evolves following the solution of FPE,
\begin{equation}
\left\lbrace\begin{array}{l}
\frac{\partial\rho}{\partial t}(x,t)=\nabla\cdot(\rho(x,t)\nabla p(x))+\beta\Delta\rho(x,t)\\
\rho(x,0)=\rho_0(x)
\end{array}\right.,
\label{equ:fpe}
\end{equation}
where $\rho_0$ is a given distribution and $p(x)$ is the potential function. Based on \eqref{equ:fpe}, the region $\mathcal{R}_f$ is constructed by an intermittent diffusion process, meaning we take $\beta$ to be $0$, so that the density is transported greedily along the negative gradient direction, while we adjust $\beta > 0$ to trigger a diffusion process when trapped in a local minimizer. For simplicity, we call $\beta=0$ the gradient part and $\beta>0$ the diffusion part. However, since our graph generating algorithm only choose new points along the given orthonormal directions $N$, we must replace $\nabla p(x)$ in \eqref{equ:fpe}  by
its projection onto $N$:
\begin{align*}
\mathbf{u}(x)=\sum_{y\in\mathcal{H}(x)}P_y\nabla p(x),
\end{align*}
where $P_y$ is the projection operator to $y$, and $\mathcal{H}(x)$ is defined as, 
\begin{align*}
\mathcal{H}(x)=\arg\min_{y\in\mathcal{A}}\frac{\langle\nabla p(x),y\rangle}{\|\nabla p(x)\|}
\end{align*}
with $\mathcal{A}=\{y: y\in N \text{ or }-y\in N \text{ and }  x+ \lambda y \notin \mathcal{O} \forall\lambda \in (0, \xi)\text{ for some }\xi>0 \}$. The resulting equation is
\begin{equation}
\left\lbrace\begin{array}{l}
\frac{\partial\rho}{\partial t}(x,t)=\nabla\cdot(\rho(x,t)\mathbf{u}(x))+\beta\Delta\rho(x,t)\\
\rho(x,0)=\rho_0(x)
\end{array}\right.. 
\label{equ:fpegrad}
\end{equation}
We note that both \eqref{equ:fpe} and \eqref{equ:fpegrad} can be rewritten as
\begin{equation*}
\frac{\partial\rho}{\partial t}(x,t)=\nabla\cdot\left\lbrace\rho(x,t)\left[\mathbf{v}(x)+\nabla (\beta \log \rho(x,t))\right]\right\rbrace,
\end{equation*}
where $\mathbf{v}=\nabla p$ for \eqref{equ:fpe} and $\mathbf{v}=\mathbf{u}$ for \eqref{equ:fpegrad}. This expression can be approximated by the following upwind discretization of \eqref{equ:fpe} on a lattice grid $GL\subset\Omega\backslash\mathcal{O}$ (here we assume that $x_i$ is one of the grid points), with mesh size $\Delta x$ and orientation aligned with the orthonormal basis $N$ used in Algorithm \ref{alg:2} \cite{chow2012fokker},
\begin{equation}
\frac{\partial\rho_j}{\partial t}=\left(\sum_{k\in Nb(j)}(F_k(\rho,\beta)-F_j(\rho,\beta))_+\rho_kd_{jk}-\sum_{k\in Nb(j)}(F_j(\rho,\beta)-F_k(\rho,\beta))_+\rho_jd_{jk}\right)\frac{1}{(\Delta x)^2},
\label{equ:discrete}
\end{equation}
where $(\cdot)_+=\max(\cdot,0)$, $\rho_j = \rho(x_j,t)$, $Nb(j)$ is the set of all adjacent nodes (neighbors) of node $x_j$ on the grid $GL$, $F_j(\rho,\beta)=\frac{\partial}{\partial\rho_j}\mathcal{F}(\rho,\beta)$, in which $\mathcal{F}(\rho,\beta)$ is the free energy
\begin{equation} \label{equ:freeenergy}
\mathcal{F}(\rho,\beta)=\sum_{j=1}^N\left(p(x_j)\rho_j+\beta\rho_j\log\rho_j\right),
\end{equation}
and $d_{jk}=d_{kj}=1$ for \eqref{equ:fpe}. A similar discretization can be derived for \eqref{equ:fpegrad}. The value of $d_{jk}$ in the discretization of  $\mathbf{u}$, which, if assume $\phi(j)>\phi(k)$ without loss of generality, can be defined as
\begin{equation}\label{equ:proj}
d_{jk}=\left\lbrace\begin{array}{ll}1&\left\langle j-k,\nabla p(j)\right\rangle=\min_{\{i\in Nb(j):p(i)<p(j)\}}\left\langle j-i,\nabla p(j)\right\rangle\\0&\text{otherwise}\end{array}\right.,
\end{equation}
where $i,j,k$ represent the coordinates of the corresponding nodes and $\nabla p(j)$ is the gradient vector at configuration $j$ in $\Omega$. If the projection is not involved as is in the diffusion part, we simply let $d_{jk}=1$ for all $j,k$.

In the rest of this section, we show how to build $\mathcal{R}_f$ using \eqref{equ:discrete} with $\Delta x=l$. The strategy is that we alternate the procedures between the gradient ($\beta =0$) and diffusion ($\beta\neq 0$) to grow the region. When a new part of region is formed each time, we simply union it with the existing one. We want to mention that
at any point we change the procedures ($\beta$ from $0$ to a nonzero value, or vice versa), we reinitialize the density before evolving \eqref{equ:fpegrad}. We terminate the procedure, if the target configuration is included in $\mathcal{R}_f$.

\subsection{Gradient part of $\mathcal{R}_f$} \label{sub:gradpart}
For the gradient case, the points on the grid expand along the projection of the negative gradient of the potential function onto $N$. We evolve \eqref{equ:discrete} with $\beta=0$ and the initial condition 
\begin{equation*}
\rho(x,0)=\delta_{x_i}=\left\lbrace\begin{array}{ll}1&x=x_i\\0&x\neq x_i\end{array}\right.,
\end{equation*}
where $x_i$ is the starting point of the current gradient procedure. Until reaching the steady state, the solution $\rho(x,t)$ on the grid $GL$ can be calculated. The steady state solution satisfies for the following property,
\begin{proposition}\label{prop1}
$\rho(x,\infty)=\delta_{V_{loc}}$, where $V_{loc}\subset GL$ is a subset of local minimizers of $p(x)$ on the given grid.
\end{proposition}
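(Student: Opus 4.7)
The plan is to exploit the directed-forest structure that the projection \eqref{equ:proj} imposes on the lattice $GL$, and to use the free energy $\mathcal{F}(\rho,0)=\sum_j p(x_j)\rho_j$ as a Lyapunov functional. Under $\beta=0$ we have $F_j=p(x_j)$. By \eqref{equ:proj}, every node $j\notin V_{loc}$ possesses (up to ties) a unique neighbor $s(j)$ with $p(s(j))<p(j)$ and $d_{j,s(j)}=1$, while $d_{jk}=0$ for all other descending neighbors; for $j\in V_{loc}$ the defining index set in \eqref{equ:proj} is empty, so $d_{jk}=0$ for every descending neighbor. Iterating $s$ strictly decreases $p$, hence each chain terminates in $V_{loc}$, and $GL$ inherits the structure of a rooted forest whose roots are exactly $V_{loc}$.

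First I would verify that \eqref{equ:discrete} keeps $\rho(\cdot,t)$ in the probability simplex: each outflow term carries a factor $\rho_j$, so non-negativity is preserved, and summing \eqref{equ:discrete} over $j$ and swapping $(j,k)$ in the loss piece (using $d_{jk}=d_{kj}$) gives $\sum_j\dot\rho_j=0$. The core computation is then the energy identity: substituting \eqref{equ:discrete} into $\dot{\mathcal{F}}=\sum_j F_j\dot\rho_j$ and swapping indices in the inflow sum produces
\[
\dot{\mathcal{F}}(\rho,0)=-\frac{1}{(\Delta x)^2}\sum_{j,k}\bigl((F_j-F_k)_+\bigr)^2\rho_j\,d_{jk}\le 0,
\]
with equality if and only if $\rho_j d_{jk}=0$ for every pair with $F_j>F_k$. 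Since every $j\notin V_{loc}$ has at least one edge on which $p(j)>p(k)$ and $d_{jk}=1$, equality forces $\rho_j=0$ at each such $j$. LaSalle's invariance principle, applied on the compact invariant simplex, then upgrades this to genuine convergence $\rho(\cdot,t)\to\rho_\infty$ with $\mathrm{supp}(\rho_\infty)\subseteq V_{loc}$, which is the claim.

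The main delicate point will be the unambiguous definition of the forest in the presence of ties in the $\min$ of \eqref{equ:proj}: any deterministic tie-breaking rule works, but one has to check that the Lyapunov argument and mass conservation do not depend on the choice. A short mass-tracking step along the successor chains, in which mass deposited at an interior node $j$ decays exponentially at rate $(p(j)-p(s(j)))/(\Delta x)^2>0$ and is delivered to the unique root reachable from $j$, would then identify $\mathrm{supp}(\rho_\infty)$ with the reachable subset $V_{loc}^{(x_i)}\subseteq V_{loc}$, matching the proposition's phrasing that the support is a \emph{subset} of the grid local minimizers rather than all of them.
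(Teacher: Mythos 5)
Your proof is correct, and its engine is the same one the paper uses: the free energy $\mathcal{F}(\rho,0)=\sum_j p(x_j)\rho_j$ as a Lyapunov functional for \eqref{equ:discrete}, with the conclusion that no mass can persist at a node possessing a descending edge $d_{jk}=1$. The difference is in how that conclusion is extracted. The paper (via Lemma \ref{lemma 00}) establishes $\dot{\mathcal{F}}\le 0$, asserts convergence, and then argues by contradiction: if the stationary $\rho$ charged a non-minimizer $x_0$, transferring its mass to the descending neighbor $z$ would give $\hat\rho$ with $\mathcal{F}(\hat\rho)<\mathcal{F}(\rho)$, ``contradicting that $\rho$ is the minimizer of $\mathcal{F}$.'' You instead read the support condition directly off the dissipation identity $\dot{\mathcal{F}}=-\frac{1}{(\Delta x)^2}\sum_{j,k}\bigl((F_j-F_k)_+\bigr)^2\rho_j d_{jk}$ and invoke LaSalle. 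Your route is arguably tighter on two points: (i) for $\beta=0$ the $\omega$-limit is generally \emph{not} the global minimizer of $\mathcal{F}$ (that would be $\delta$ at the global minimizer of $p$), so the paper's appeal to minimality really means ``no descent direction accessible to the dynamics,'' which is exactly what your equality case states; and (ii) your exponential mass-decay remark along successor chains actually justifies convergence of $\rho(\cdot,t)$ itself, whereas the paper only notes that $\mathcal{F}(\rho(t))$ is bounded below and decreasing. Note also that your dissipation formula carries the correct weight $\rho_j$ (the density at the higher-potential node, matching the loss term of \eqref{equ:discrete}); the paper's displayed computation ends with $\rho_k$, which appears to be a typo. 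No gaps; the tie-breaking caveat you flag is harmless since ties only enlarge the set of edges forced to carry zero mass at equilibrium.
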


Then we select points such that
\begin{equation}\label{equ:region_grad}
\mathcal{R}_1(x_i)=\left\lbrace x \in GL:\frac{\partial}{\partial t}\rho(x,t)>0\text{ for some }t>0\right\rbrace,
\end{equation}

Once $\mathcal{R}_1(x_i)$ is determined, we merge it to the set $\mathcal{R}$ constructed in the previous steps (we use empty at the first step), i.e. $\mathcal{R} = \mathcal{R} \bigcup \mathcal{R}_1$. If $x_i\neq x_f$, we continue to amend the set $\mathcal{R}$ with the diffusion procedure described in Section \ref{sub:diffpart}.

\subsection{Diffusion part of $\mathcal{R}_f$} \label{sub:diffpart}
We assume that the previously constructed set is $\mathcal{R}_p=\mathcal{R}$.  In the diffusion part, since $\beta >0$,  $\log\rho$ is involved in the calculation. To avoid blowing up in the computation, we initialize the density $\beta$ for \eqref{equ:discrete} as below:
\begin{equation*}
\rho(x,0)=\delta_{\mathcal{R}_p}=\left\lbrace\begin{array}{ll}\frac{1}{|\mathcal{R}_p|}(1-\epsilon)&x\in\mathcal{R}_p\\\frac{1}{|GL\backslash\mathcal{R}_p|}\epsilon&x\in GL\backslash\mathcal{R}_p\end{array}\right.,
\end{equation*}
where $\epsilon$ can be an arbitrarily small positive real number. With this initialization, we can calculate $\rho(x,t)$ in \eqref{equ:discrete} until reaching the stationary solution $\rho(x,\infty)$. Now following $\rho(x,\infty)$, we choose points on the grid as:
\begin{align*}
\mathcal{R}_2&=\bigcup_{s=0}^W\mathcal{R}_2^s,\\
\mathcal{R}_2^s&=\left\lbrace x=\arg\max_{y\in GL\backslash\bigcup_{j=0}^{s-1}\mathcal{R}_2^j}\rho(y,\infty): \mathcal{R}_2^{s-1} \bigcap Nb(x)\neq\emptyset\right\rbrace,\\
\mathcal{R}_2^0&=\mathcal{R}_p,
\end{align*}
where $W$ is defined by
\begin{equation*}
W=\arg\min_s\left\lbrace s:z\in\mathcal{R}_2^s,\exists y\in Nb(z)\backslash\left(\bigcup_{\tau=1}^s\mathcal{R}_2^{\tau}\right) \text{ with }p(y)<p(z)\right\rbrace.
\end{equation*}
We union $\mathcal{R}_2$ into $\mathcal{R}$ by defining $\mathcal{R} = \mathcal{R} \bigcup \mathcal{R}_2$. In the newly selected $\mathcal{R}_2$, we pick
\begin{equation}\label{equ:escaping}
x_i=\arg\min_y\left\lbrace p(y):y\in Nb(x)\backslash\mathcal{R}\text{ for some }x\in\mathcal{R}\right\rbrace .
\end{equation}
This $x_i$ is the new starting point for the next gradient procedure, and we return to the gradient part as described in Section \ref{sub:gradpart}.

By alternating the procedures to obtain $\mathcal{R}_1$ and $\mathcal{R}_2$ until $x_f$ is included, we define the final region
\begin{equation*}
\mathcal{R}_f=\bigcup_{x\in\mathcal{R}}Box(x,l)
\end{equation*}
where $Box(x,l)$ is the closed box centered at $x$ with edge length $2l$. With the constructed $\mathcal{R}_f$, we have the following theorem.
\begin{theorem}\label{thm3}
Assuming that the robots only stops on node points with assumptions in Theorem \ref{thm2} and $R>L$, the complete path $\gamma$ generated by the algorithm satisfies $\gamma\subset\mathcal{R}_f$.
\end{theorem}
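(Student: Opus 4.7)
The plan is to reduce $\gamma \subset \mathcal{R}_f$ to a node-level inclusion and then match, step by step, the deterministic behavior of Algorithm \ref{alg:2} against the discrete Fokker-Planck evolution \eqref{equ:discrete} that builds $\mathcal{R}$. Since every vertex generated by Algorithm \ref{alg:2} is obtained from $x_s$ by successive displacements of the form $\pm l e_j$ with $e_j \in N$, the tree $V$ lies on the same lattice $GL$ used to construct $\mathcal{R}$. Moreover, any tree edge has the form $[v, v \pm l e_j]$, which is contained in the closed box $\mathrm{Box}(v,l)$. Consequently, if every base node traversed by $\gamma$ is shown to lie in $\mathcal{R}$, then $\gamma \subset \mathcal{R}_f$ follows at once.

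Next, I would split each execution of Algorithm \ref{alg:2} into descent phases (when the currently lowest-potential vertex $v \in V$ admits a feasible neighbor of strictly smaller potential) and trap phases (when it does not). In a descent phase, the priority-queue selection in Algorithm \ref{alg:2} picks exactly the neighbors favored by the projected gradient direction $\mathbf{u}$ used in \eqref{equ:fpegrad}, so each newly added node lies in $\{x : \partial_t \rho(x,t) > 0 \text{ for some } t > 0\}$, i.e., in some $\mathcal{R}_1(x_i)$ from Section \ref{sub:gradpart}. In a trap phase, Algorithm \ref{alg:2} back-tracks to the next-lowest-potential vertex and expands outward inside the trap region $\mathcal{L}^*(v)$; this should match the construction of $\mathcal{R}_2$, because the stationary solution of \eqref{equ:discrete} with $\beta > 0$ concentrates higher mass on configurations of lower free energy \eqref{equ:freeenergy}, and the greedy rule defining $\mathcal{R}_2^s$ adds trap points in increasing order of potential until a strictly lower level set is reachable---exactly the criterion by which Algorithm \ref{alg:2} exits the trap through the node chosen by \eqref{equ:escaping}.

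The overall statement then follows by induction on the index $i$ of the sub-paths $\gamma_1, \dots, \gamma_m$ generated by Algorithm \ref{alg:1}, whose finiteness is guaranteed by Theorem \ref{thm2}. The base case is immediate since $x_s$ is included in the first $\mathcal{R}_1$ by construction. For the inductive step, when $\gamma_i$ is blocked and the robot stops at $x_c^{(i+1)}$, the standing hypothesis that robots only stop at node points keeps the new start on $GL$, and the inequality $R > L$ ensures that every obstacle lying within one tube radius of the stopping point has already been detected, so Algorithm \ref{alg:2} restarted from $x_c^{(i+1)}$ and the next incremental build of $\mathcal{R}$ operate against the same currently known environment $\mathcal{O}_c$, making the descent/trap matching above apply verbatim to $\gamma_{i+1}$.

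The main obstacle will be the trap-phase matching: proving that the priority-based expansion of Algorithm \ref{alg:2} and the diffusion-driven ordering embedded in the sequence $\{\mathcal{R}_2^s\}$ select the same grid points in compatible sequences. This requires a monotonicity argument based on the free energy \eqref{equ:freeenergy}, namely that strictly lower-potential configurations in the trap receive strictly higher stationary mass, combined with a careful verification that the termination index $W$ coincides with the very first step at which Algorithm \ref{alg:2} appends a child lying below the trap level $\mathcal{L}^*(v)$; this bookkeeping, together with correctly handling ties in the potential, is the most delicate part of the argument.
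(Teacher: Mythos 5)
Your overall shape is right---reduce to a node-level inclusion, then match the algorithm's gradient-like and diffusion-like phases against the discrete FPE construction of $\mathcal{R}$---and this is essentially what the paper's Lemma \ref{lemma 01} does. But there are two genuine gaps. First, the phase-matching itself (in particular the trap phase, where you must show the priority-queue expansion of Algorithm \ref{alg:2} tracks the sets $\mathcal{R}_2^s$ and that the exit node of \eqref{equ:escaping} is the node the algorithm actually selects next) is exactly the hard content of the theorem, and you explicitly defer it as ``the most delicate part'' rather than prove it. As written, the proposal restates the claim to be proved at the level of the key lemma.

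Second, and more fundamentally, you conflate the environment used to build $\mathcal{R}_f$ with the partially known one. The region $\mathcal{R}_f$ is constructed once, on the lattice $GL\subset\Omega\backslash\mathcal{O}$, i.e., with \emph{full} knowledge of the obstacles; there is no ``incremental build of $\mathcal{R}$ against $\mathcal{O}_c$.'' Meanwhile each graph $G$ in Algorithm \ref{alg:1} is generated with only $\mathcal{O}_c\subset\mathcal{O}$ known, so $G$ can (and does) place vertices and descend through regions that are actually infeasible and hence lie outside $GL$ and outside $\mathcal{R}_f$. Your descent/trap matching, applied to such a graph, would therefore fail: the partial-knowledge descent direction need not be a direction along which the full-knowledge FPE density ever increases. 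What saves the theorem is that only the \emph{traversed} portion of each graph matters, and that portion lies where $\mathcal{O}_c$ and $\mathcal{O}$ agree. The paper handles this by introducing the oracle graph $G_f$ generated with $\mathcal{O}_c=\mathcal{O}$, showing $G$ and $G_f$ coincide inside the detection ball $B(x,R)$, and using $R>L$ to argue the robot stops before reaching any node of $G$ that is not in $G_f$; the trajectory is thus confined to $G_f$, and only $G_f$ (not the runtime graphs) needs to be compared with $\mathcal{R}_f$. Your appeal to $R>L$ gestures at this but does not carry it out: you would need to isolate the traversed sub-tree, prove it sits inside the full-knowledge graph, and only then run your phase-matching argument on that full-knowledge object.
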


The proofs of Theorem \ref{thm3} and Proposition \ref{prop1} are given in Section \ref{convergence}.

We show two examples in Figure \ref{fig:region} with initial configurations indicated by red diamonds and target red circles. The gray region is calculated by the gradient and diffusion procedures described in this section. The computation is done on a grid with mesh size $\Delta x=l$. As we can see clearly in Figure \ref{fig:region1}, starting from the right middle part of $\Omega$, the graph $G$ first expands along the x-axis, which is the projected negative gradient direction on the lattice grid, until it hits an obstacle. Then the procedure is switched to the diffusion case, and produces a region in front of the obstacle following the Gibbs' distribution until finding a way out.  After that, the procedure changes back to the gradient case and moves to the target greedily. This time, the projected negative gradient direction coincides with the actual one. Again, the diffusion case kicks in when a local minimizer is encountered. The procedure repeats until the target is reached.  Figure \ref{fig:region2} shows another example with more complicated set ups. 

We would like to mention that we use the forward Euler method to discretize \eqref{equ:discrete} in time,
\begin{equation}\label{equ:euler}
\rho_j^+=\rho_j+\left(\sum_{k\in Nb(j)}(F_k(\rho,\beta)-F_j(\rho,\beta))_+\rho_kd_{jk}-\sum_{k\in Nb(j)}(F_j(\rho,\beta)-F_k(\rho,\beta))_+\rho_jd_{jk}\right)\frac{\Delta t}{(\Delta x)^2}.
\end{equation}
To make the scheme convergent, we need the following proposition, which can be regarded as the Courant-Friedrichs-Lewy (CFL) conditions for numerical PDE schemes.
\begin{proposition}
Given the lattice grid with grid size $\Delta x$, \eqref{equ:euler} is stable if
\begin{equation}\label{equ:cfl}
\Delta t<\min\left\lbrace\frac{1}{\max_{j\in GL}\sum_{k\in Nb(j)}(F_j-F_k)_+d_{jk}},\min_{j\in GL}\frac{1-\rho_j}{\sum_{k\in Nb(j)}(F_k-F_j)_+\rho_kd_{jk}}\right\rbrace(\Delta x)^2,
\end{equation}
\end{proposition}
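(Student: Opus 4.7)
The plan is to interpret \emph{stability} in \eqref{equ:cfl} as the invariance $0\le \rho_j^+\le 1$ propagated from one time step to the next, so that the iterates remain admissible density values at every node.

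To handle the lower bound, I would regroup \eqref{equ:euler} by isolating the self-outflow coefficient:
\begin{equation*}
\rho_j^+ \;=\; \rho_j\Bigl(1-\tfrac{\Delta t}{(\Delta x)^2}\sum_{k\in Nb(j)}(F_j-F_k)_+ d_{jk}\Bigr) \;+\; \tfrac{\Delta t}{(\Delta x)^2}\sum_{k\in Nb(j)}(F_k-F_j)_+\rho_k d_{jk}.
\end{equation*}
The second sum is non-negative termwise because $(\cdot)_+$, $\rho_k$ and $d_{jk}$ are, so $\rho_j^+\ge 0$ follows once the parenthesized factor on the first summand is non-negative, i.e.\ once $\Delta t/(\Delta x)^2\le 1/\sum_k(F_j-F_k)_+ d_{jk}$; taking the worst $j$ recovers the first entry of \eqref{equ:cfl}. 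For the upper bound I would drop the non-positive outflow to get the one-sided estimate $\rho_j^+\le \rho_j+\tfrac{\Delta t}{(\Delta x)^2}\sum_k(F_k-F_j)_+\rho_k d_{jk}$ and require its right-hand side to be at most $1$, which is precisely the second entry. An induction on the time step then yields $\rho^n\in[0,1]$ for all $n$.

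The main technical obstacle I anticipate is that the bound \eqref{equ:cfl} depends on $\rho$ itself through $F_j(\rho,\beta)=p(x_j)+\beta(1+\log\rho_j)$ whenever $\beta>0$, so the thresholds can degenerate as some $\rho_j$ approaches $0$ (through $\log\rho_j$) or $1$ (through the $1-\rho_j$ numerator). I would resolve this by exploiting the diffusion-phase initialization, which keeps $\rho_j$ uniformly bounded below by $\epsilon/|GL\setminus\mathcal{R}_p|$ (Section \ref{sub:diffpart}), and then arguing by a maximum-principle-style induction: if the CFL holds at step $n$, the step-$(n+1)$ iterate lies again in $[0,1]$ and, thanks to the inflow from interior neighbours and the strict lower bound just derived, stays separated from the singularities of $\log$, so a uniform $\Delta t$ can be chosen over the whole phase. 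In the pure-gradient phase ($\beta=0$) the coefficients $F_j=p(x_j)$ are $\rho$-independent and the estimate is immediate, so the substantive work reduces to the diffusion phase just described.
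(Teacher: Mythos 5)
Your proposal is correct and follows essentially the same route as the paper: both derive the two entries of \eqref{equ:cfl} by requiring the forward-Euler update to keep $\rho_j^+$ within $(0,1)$ (the outflow term giving the first bound, the inflow term the second), and both address the possible degeneration of the bounds via a uniform positive lower bound on $\rho_j$ (which the paper obtains by citing Theorem 3 of \cite{li2016study} rather than re-deriving it).
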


\begin{proof}
To make the scheme stable, we need
\begin{align*}
&\left\lbrace\begin{array}{l}
\rho_j+\sum_{k\in Nb(j)}(F_k-F_j)_+\rho_kd_{jk}\frac{\Delta t}{(\Delta x)^2}<1\\
\rho_j-\sum_{k\in Nb(j)}(F_j-F_k)_+\rho_jd_{jk}\frac{\Delta t}{(\Delta x)^2}>0
\end{array}\right.\\
\Longrightarrow&\left\lbrace\begin{array}{l}
\Delta t<\frac{1}{\sum_{k\in Nb(j)}(F_j-F_k)_+d_{jk}}(\Delta x)^2\\
\Delta t<\frac{1-\rho_j}{\sum_{k\in Nb(j)}(F_k-F_j)_+\rho_kd_{jk}}(\Delta x)^2
\end{array}\right.
\end{align*}
for all $j\in GL$. This leads to \eqref{equ:cfl}. In addition, following the proof of Theorem 3 in \cite{li2016study}, we can obtain $\rho_i(t)\geq\epsilon$ for a fixed grid. Therefore $(F_j-F_k)_+$ and $(F_j-F_k)_+\rho_k$ are bounded from above for all edges $\{(j,k)\}$ in $GL$. This implies that the right hand side of \eqref{equ:cfl}  is bounded from below by a positive real number, so $\Delta t$ can stay strictly positive.
\end{proof}

\begin{remark}
For each given $l$, we can get a region $\mathcal{R}_f(l)$. If we let $l$ go to $0$, which means that the lattice $GL$ approaches to the continuous space, we can define $\mathcal{R}_f(\infty)=\lim\sup_{l\rightarrow 0}\mathcal{R}_f(l)$ . The graph $G$ produced by our algorithm must satisfy $G\subset\mathcal{R}_f(\infty)$. In fact, $\mathcal{R}_f(\infty)$ is the smallest bounded region in which the search is conducted. 
\end{remark}

\begin{figure}\centering
\subfigure[] { \label{fig:region1}
\includegraphics[width=0.45\columnwidth]{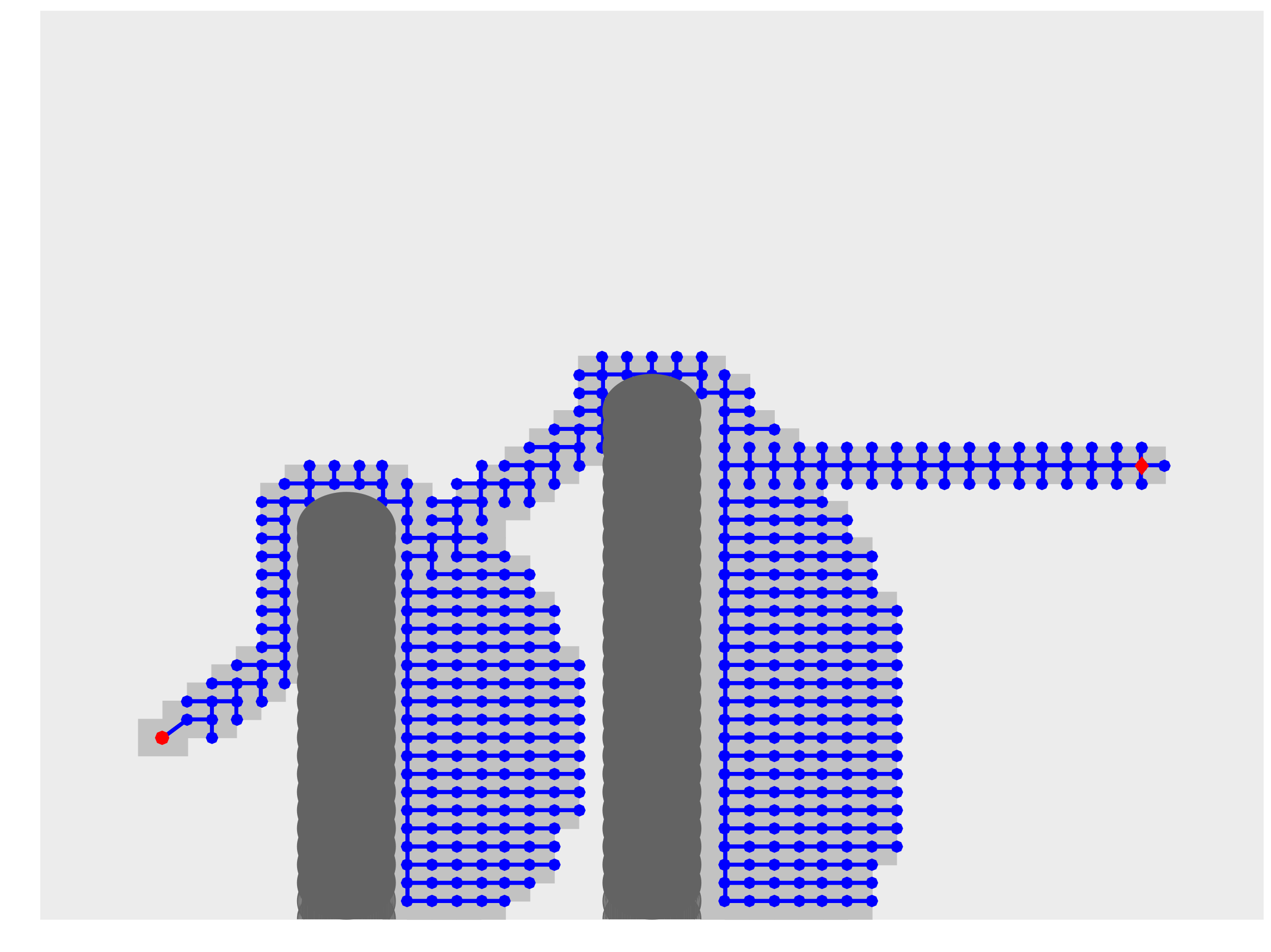} 
}
\subfigure[] { \label{fig:region2}
\includegraphics[width=0.45\columnwidth]{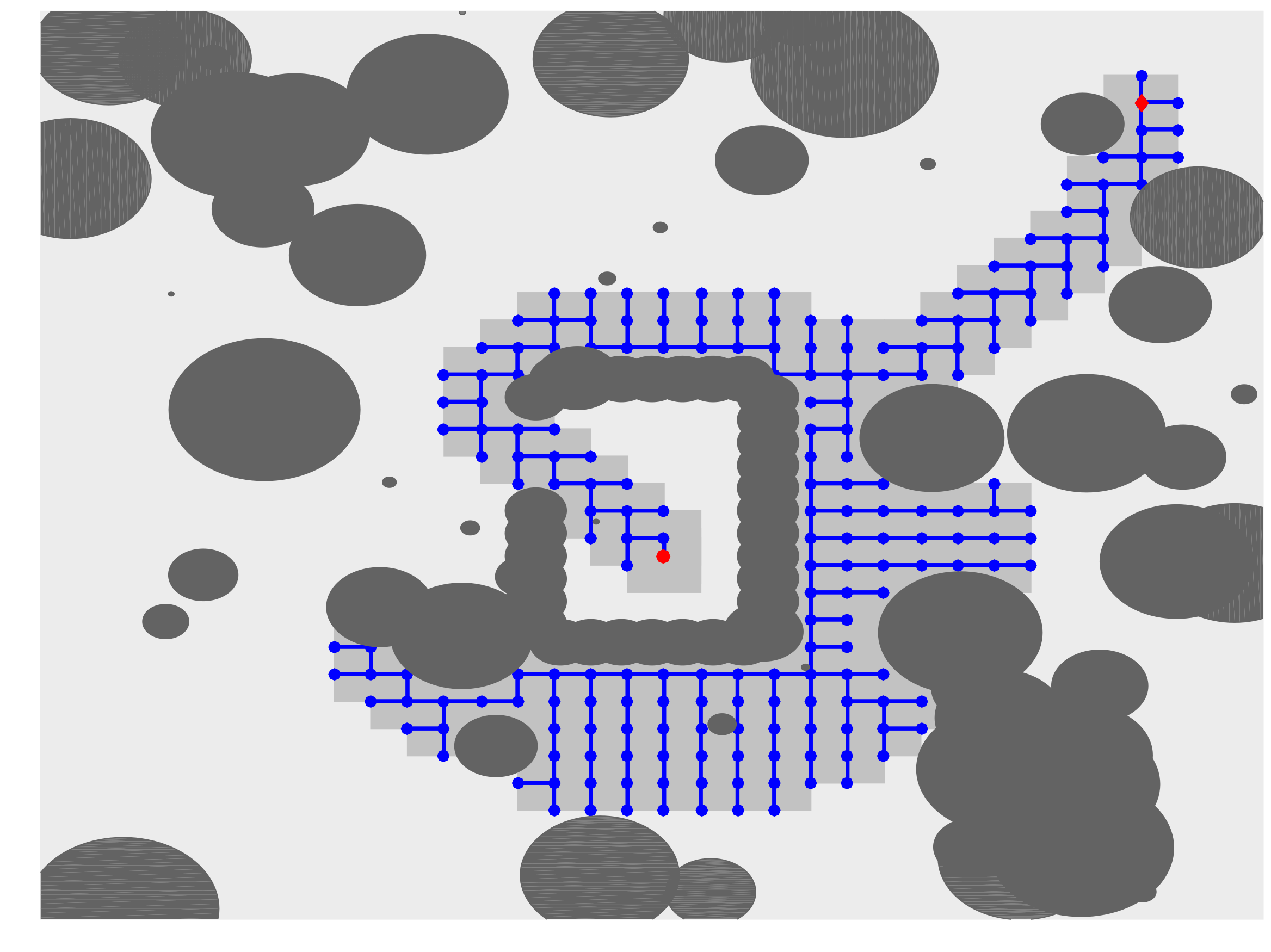} 
}
\caption{\small Relation between graph generator and FPE: Given an known environment, the graph generated by Algorithm \ref{alg:2} can be fully covered by $\mathcal{R}_f$. In both \ref{fig:region1} and \ref{fig:region2}, there exists gradient and diffusion part. The gray shadow is $\mathcal{R}_f$ and the blue part is the graph $G$ with initial and target being marked as red diamond and red dot respectively.}
\label{fig:region}
\end{figure}

\section{Convergence Analysis} \label{convergence}

\subsection{Convergence of the algorithm}
In this section, we give detailed proofs for the Theorems \ref{thm1} and \ref{thm2}. To do this, we need to prove several lemmas first. We begin with showing that the set of all feasible path $\Gamma$ is compact in a finite time interval $[0,T]$ (Lemma \ref{Lemma 1}) and there exists a feasible path in $\Gamma$ within a tubular region $\mathcal{T}$ that is clear of obstacles (Lemma \ref{Lemma 2}).

\begin{lemma} \label{Lemma 1} If there exists a feasible path,  $\Gamma$ is non-empty and compact with respect to the  $L^\infty$ norm given by 
\begin{align*}
&d_\Gamma(\gamma_1,\gamma_2)=\max_{t\in[0,T]}\|\gamma_1(t)-\gamma_2(t)\|,
\end{align*}
where $\gamma_1$ and $\gamma_2$ are two paths in $\Gamma$.
\end{lemma}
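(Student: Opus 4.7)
The plan is to split the argument into non-emptiness and compactness. Non-emptiness is essentially free: the standing hypothesis (\ref{equ:feasibleregion}) asserts $L>0$, meaning the supremum over $\gamma\in\Gamma$ of the tubular radius is strictly positive, so in particular $\Gamma$ contains at least the central curve of a tubular obstacle-free region of radius $L$.

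For compactness with respect to $d_\Gamma$ (the sup-norm on $C([0,T];\Omega)$), I would invoke the Arzel\`a--Ascoli theorem and verify three ingredients: uniform boundedness, equicontinuity, and closedness of $\Gamma$ inside $C([0,T];\Omega)$. Uniform boundedness is immediate since $\Omega\subset\mathbb{R}^n$ is bounded. Equicontinuity is the principal hurdle: a bare continuous path carries no quantitative modulus of continuity, so to make the statement hold one must restrict $\Gamma$ to a class equipped with a uniform modulus, e.g.\ Lipschitz paths with a common speed bound, which is natural given the robotics setup. Granted such a bound, Arzel\`a--Ascoli yields relative compactness.

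Closedness is the cleanest piece and I would handle it as follows. Take $\gamma_n\in\Gamma$ with $\gamma_n\to\gamma$ uniformly. The initial condition $\gamma(0)=x_s$ is preserved by pointwise evaluation. Obstacle-avoidance is preserved because $\mathcal{O}$ is open, so $\Omega\setminus\mathcal{O}$ is closed and sequentially closed: $\gamma_n(t)\in\Omega\setminus\mathcal{O}$ for all $n,t$ forces $\gamma(t)\in\Omega\setminus\mathcal{O}$. For the terminal condition, each $\gamma_n$ arrives at $x_t$ by some time $T_n\le T$; by Bolzano--Weierstrass extract $T_{n_k}\to T_\infty\le T$. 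For every $t>T_\infty$ we eventually have $T_{n_k}\le t$, so $\gamma_{n_k}(t)=x_t$ and hence $\gamma(t)=x_t$; continuity of $\gamma$ then forces $\gamma(T_\infty)=x_t$, so $T_\infty$ is a valid arrival time for $\gamma$, giving $\gamma\in\Gamma$.

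The main obstacle, as flagged above, is the equicontinuity step: if $\Gamma$ is to be compact as stated it cannot be taken as the set of \emph{all} continuous obstacle-avoiding paths, and some uniform regularity (e.g.\ a common Lipschitz constant inherited from bounded robot velocity) must be in force. Once this is supplied, the plan above assembles immediately into a proof.
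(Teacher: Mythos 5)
Your argument follows the same skeleton as the paper's on the parts the paper actually carries out: non-emptiness by extending the given feasible path by the constant $x_t$, and closedness by noting that $\Omega\setminus\mathcal{O}$ is closed so a uniform limit of feasible paths is feasible and inherits the endpoint conditions. (Your arrival-time extraction $T_{n_k}\to T_\infty$ is more elaborate than needed: every $\gamma\in\Gamma$ satisfies $\gamma(T)=x_t$, so the limit does too and one may simply take $T_0=T$, which is what the paper does.) Where you genuinely diverge is the passage from closedness to compactness. The paper's proof ends with ``since $\Omega$ is compact, $\Gamma$ is bounded, we conclude that $\Gamma$ is compact,'' i.e.\ it invokes closed-and-bounded-implies-compact, which holds in $\mathbb{R}^n$ but not in $C([0,T];\Omega)$ with the sup norm $d_\Gamma$. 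You are right that sequential compactness requires extracting a uniformly convergent subsequence from an \emph{arbitrary} sequence in $\Gamma$, that Arzel\`a--Ascoli is the right tool, and that equicontinuity fails for the family of all continuous obstacle-avoiding paths (arbitrarily fast oscillations admit no uniformly convergent subsequence). This is not a pedantic point: the proof of Lemma \ref{Lemma 2} uses exactly this subsequence extraction when it asserts $\gamma_n\to\gamma_0\in\Gamma$ for a maximizing sequence. So your proposal correctly identifies a gap in the paper's own argument and supplies the standard repair --- restrict $\Gamma$ to paths with a uniform modulus of continuity (e.g.\ a common Lipschitz/speed bound, natural for robots with bounded velocity), after which Arzel\`a--Ascoli plus your closedness argument gives the stated compactness. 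The one cosmetic weakness in your write-up is the non-emptiness step: you derive it from \eqref{equ:feasibleregion}, whose supremum over $\gamma\in\Gamma$ already presupposes $\Gamma\neq\emptyset$; the lemma's own hypothesis (``there exists a feasible path'') is the cleaner source, exactly as the paper uses it.
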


\begin{proof} Let us denote the feasible path as 
\[
\gamma:[0,T_0]\rightarrow\Omega.
\]
If we define $\gamma(t)=x_t$ for all $t\in(T_0,T]$, we have $\gamma\in\Gamma$. To prove $\Gamma$ is compact, we assume there is a sequence of paths $\{\gamma_i\}_{i=0}^\infty\subset\Gamma$ such that
\[
\lim_{i\rightarrow\infty}d_\Gamma(\gamma_i,\gamma)=0.
\]
Since $\Omega$ is compact and $\mathcal{O}$ is open, it implies that $(\Omega\backslash\mathcal{O})$ is compact. Therefore for any $t\in[0,T]$, $\gamma_i(t)\rightarrow\gamma(t)$ as $i\rightarrow\infty$, we have $\gamma(t)\in(\Omega\backslash\mathcal{O})$. This includes
\begin{align*}
&\gamma(0)=\lim_{i\rightarrow\infty}\gamma_i(0)=x_s,\\
&\gamma(T)=\lim_{i\rightarrow\infty}\gamma_i(T)=x_t,\\
&\gamma(t) \subset(\Omega\backslash\mathcal{O}),
\end{align*}
which gives $\gamma\in\Gamma$, and $\Gamma$ is a close set. In addition, since $\Omega$ is compact, $\Gamma$ is bounded, we conclude that $\Gamma$ is compact.
\end{proof}

\begin{lemma} \label{Lemma 2} Assume that (\ref{equ:feasibleregion}) is true,  there must exist a feasible path $\gamma\in\Gamma$ satisfying 
\[
\left(\cup_{t\in[0,T]}B(\gamma(t),L)\right)\cap\mathcal{O}=\emptyset,
\]
\end{lemma}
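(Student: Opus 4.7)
The plan is to use the compactness of $\Gamma$ from Lemma \ref{Lemma 1} and recast the assumption (\ref{equ:feasibleregion}) as a supremum of a continuous (in fact Lipschitz) functional over a compact set, so that the supremum is attained.

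First I would simplify the inner expression. For any $\gamma\in\Gamma$, since $\gamma([0,T])\cap\mathcal{O}=\emptyset$, the innermost supremum is exactly the distance from $\gamma(t)$ to $\mathcal{O}$:
\[
\sup_{r\geq 0}\{r: B(\gamma(t),r)\cap\mathcal{O}=\emptyset\}=d(\gamma(t),\mathcal{O}).
\]
Hence (\ref{equ:feasibleregion}) can be rewritten as
\[
\sup_{\gamma\in\Gamma} f(\gamma)=L, \qquad f(\gamma):=\inf_{t\in[0,T]} d(\gamma(t),\mathcal{O}).
\]

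Next I would show $f:\Gamma\to\mathbb{R}$ is continuous with respect to $d_\Gamma$. The map $x\mapsto d(x,\mathcal{O})$ is 1-Lipschitz on $\Omega$, so for any $\gamma_1,\gamma_2\in\Gamma$ and any $t\in[0,T]$,
\[
|d(\gamma_1(t),\mathcal{O})-d(\gamma_2(t),\mathcal{O})|\leq\|\gamma_1(t)-\gamma_2(t)\|\leq d_\Gamma(\gamma_1,\gamma_2).
\]
Taking infimum over $t$ on both sides and using the elementary inequality $|\inf a_t-\inf b_t|\leq\sup_t|a_t-b_t|$, I obtain $|f(\gamma_1)-f(\gamma_2)|\leq d_\Gamma(\gamma_1,\gamma_2)$, so $f$ is Lipschitz continuous.

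By Lemma \ref{Lemma 1}, $\Gamma$ is nonempty and compact, so the continuous function $f$ attains its supremum: there exists $\gamma^*\in\Gamma$ with $f(\gamma^*)=L$. This means $d(\gamma^*(t),\mathcal{O})\geq L$ for every $t\in[0,T]$. Since $\mathcal{O}$ is open and $B(\gamma^*(t),L)$ is the open ball of radius $L$, the inequality $d(\gamma^*(t),\mathcal{O})\geq L$ immediately gives $B(\gamma^*(t),L)\cap\mathcal{O}=\emptyset$ for all $t$, and therefore
\[
\Bigl(\bigcup_{t\in[0,T]}B(\gamma^*(t),L)\Bigr)\cap\mathcal{O}=\emptyset,
\]
as desired. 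The only subtle point is the attainment of the supremum, which is exactly where compactness of $\Gamma$ is indispensable; without Lemma \ref{Lemma 1}, the assumption only guarantees an approximating sequence of paths whose tubular radii tend to $L$ but might not reach $L$.
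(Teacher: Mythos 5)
Your proof is correct and follows essentially the same route as the paper's: both take a maximizing sequence for \eqref{equ:feasibleregion} and invoke the compactness of $\Gamma$ from Lemma \ref{Lemma 1} to obtain a limiting path that attains the tube radius $L$. Yours is in fact the more complete version, since you explicitly verify that $\gamma\mapsto\inf_{t\in[0,T]}d(\gamma(t),\mathcal{O})$ is Lipschitz with respect to $d_\Gamma$, which is precisely the continuity step the paper's proof relies on implicitly when it asserts $\inf_{t\in[0,T]}L(t)=L$ for the limit path.
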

where $\mathcal{O}$ is the constrained set.

\begin{proof} Because of (\ref{equ:feasibleregion}), there exists a sequence of paths $\{\gamma_n\}\subset\Gamma$ satisfying
\[
\lim_{n\rightarrow\infty}\inf_{t\in[0,T]}\sup_{r\in[0,\infty)}\left\lbrace r:B(\gamma_n(t),r)\cap\mathcal{O}=\emptyset\right\rbrace=L.
\]
From Lemma \ref{Lemma 1}, we know that $\Gamma$ is compact, therefore $\lim_{n\rightarrow\infty}\gamma_n = \gamma_0 \in\Gamma$. For an arbitrary $t\in[0,T]$, denote
\[
L(t)=\sup_{r\in[0,\infty)}\left\lbrace r:B(\gamma_0(t),r)\cap\mathcal{O}=\emptyset\right\rbrace
\]
As the whole space $\Omega$ is compact, the limit $L(t)<\infty$. Since $\inf_{t\in[0,T]}L(t)=L$, one has $L\leq L(t)$ for arbitrary time $t\in[0,T]$. That is, fix the curve $\gamma_0$ defined as above, for all $t\in[0,T]$, $B(\gamma_0(t),L)\cap\mathcal{O}=\emptyset$ and thus
\[
\left(\cup_{t\in[0,1]}B(\gamma_0(t),L)\right)\cap\mathcal{O}=\emptyset,
\]
which proves the lemma.
\end{proof}

In the next few lemmas, we prove several results that ensure the generating graph algorithm (Algorithm \ref{alg:2}) creating new points in the feasible region when the radius $l$ is small enough compared to $L$, and the process does not stop until reaching a neighborhood of the target configuration. 

\begin{lemma} \label{Lemma 3} Given a point $x$ on an $n$-dimensional Euclidean space and $L>0$. Let $y\in S(x,L)$, and $N$ be a set containing $n$ orthonormal vectors, then $\exists z\in K=\{y\pm lN:y\pm l e, e\in N\}$ such that $z\in B(x,L)$ if
\[
0<l<\frac{2L}{\sqrt{n}}.
\]
\end{lemma}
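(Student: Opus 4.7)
The plan is to reduce the geometric statement to a pigeonhole inequality on inner products. Set $v = y - x$, so $\|v\| = L$ by hypothesis. For any unit vector $e \in N$ and sign $\epsilon \in \{-1,+1\}$, the candidate $z = y + \epsilon l e$ satisfies
\[
\|z - x\|^2 = \|v + \epsilon l e\|^2 = L^2 + 2\epsilon l \langle v, e\rangle + l^2,
\]
so $z \in B(x,L)$ is equivalent to $2\epsilon \langle v, e\rangle + l < 0$. Choosing $\epsilon = -\operatorname{sign}\langle v, e\rangle$ (and either sign if the inner product is zero, but then the strict inequality fails), this reduces to requiring the existence of some $e \in N$ with $|\langle v, e\rangle| > l/2$.

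Next, I would use orthonormality of $N$: since $N$ is an orthonormal basis of $\mathbb{R}^n$, Parseval's identity gives
\[
\sum_{e \in N} \langle v, e\rangle^2 = \|v\|^2 = L^2.
\]
By averaging, $\max_{e \in N} |\langle v, e\rangle| \geq L/\sqrt{n}$. The hypothesis $l < 2L/\sqrt{n}$ then yields $L/\sqrt{n} > l/2$, so there is some $e^{*} \in N$ with $|\langle v, e^{*}\rangle| > l/2$. Taking $z = y - \operatorname{sign}\langle v, e^{*}\rangle \cdot l e^{*} \in K$ gives $\|z - x\|^2 < L^2$, i.e., $z \in B(x,L)$, as desired.

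There is really no obstacle here beyond recognizing that the right quantity to control is the component of $v$ along a basis direction; the inequality $l < 2L/\sqrt{n}$ is precisely the sharp threshold under which the Parseval lower bound on $\max_e |\langle v,e\rangle|$ dominates the required margin $l/2$. The proof is a two-line expansion followed by a pigeonhole bound, so the exposition should simply separate the algebraic computation from the orthonormal basis estimate.
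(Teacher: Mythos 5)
Your proof is correct and is essentially the paper's own argument in coordinate-free form: the paper takes $x=0$, expresses $y$ in the basis $N$, picks the coordinate $y_k$ with $|y_k|\geq L/\sqrt{n}$ (your Parseval/pigeonhole step), and perturbs by $-\operatorname{sign}(y_k)\,l$ in that direction, computing the same expansion $L^2+l^2-2|y_k|l<L^2$. No substantive difference.
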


\begin{proof} Without loss of generality, we can assume $x=(0,\cdots,0)$. Denote $y=(y_1,\cdots,y_n)$, we can rewrite $K=\{z_k=(y_1,\cdots,y_k\pm l,\cdots,y_n)\}_{k=1}^{n}$. Since $y\in S(x,L)$, then $\exists k\in\{1,\cdots,n\}$, so that $|y_k|\geq\frac{L}{\sqrt{n}}$. Consider the point $z_k=(y_1,\cdots,y_k-sign(y_k)l,\cdots,y_n)$, then
\begin{align*}
\|z_k-x\|^2&=\sum_{i\neq k}y_i^2+(y_k-sign(y_k)l)^2\\&=L^2+l^2-2sign(y_k)y_kl\\&=L^2+l^2-|y_k|l\\&\leq L^2+l^2-\frac{2L}{\sqrt{n}}l.
\end{align*}
One has $L^2+l^2-\frac{2L}{\sqrt{n}}l<L^2$ if $0<l<\frac{2L}{\sqrt{n}}$. So $z_k \in B(x,L)$. 
\end{proof}

\begin{lemma} \label{Lemma 4} Given a continuous path $\gamma$ and a closed set $\mathcal{V}\subset\Omega$ with $\gamma(0)=x,\gamma(T)=y$, and $x\in\mathcal{V}$ and $y\notin\mathcal{V}$, then there exists $z\in\gamma$ such that $z\in\partial\mathcal{V}$ and $\gamma\cap\partial\mathcal{V}$ is closed.
\end{lemma}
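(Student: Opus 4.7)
The plan is to use the intermediate value theorem style reasoning on the scalar set $A=\{t\in[0,T]:\gamma(t)\in\mathcal{V}\}$, which is nonempty (it contains $0$) and proper (it misses $T$). First I would define
\[
t^{*}=\inf\{t\in[0,T]:\gamma(t)\notin\mathcal{V}\}.
\]
The defining set is nonempty because $\gamma(T)=y\notin\mathcal{V}$, so $t^{*}\in[0,T)$. For any $t<t^{*}$, by the definition of the infimum, $\gamma(t)\in\mathcal{V}$. Because $\mathcal{V}$ is closed and $\gamma$ is continuous, passing to the limit $t\nearrow t^{*}$ yields $\gamma(t^{*})\in\mathcal{V}$.

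Next I would pick a sequence $t_{n}\searrow t^{*}$ with $\gamma(t_{n})\notin\mathcal{V}$, which exists by definition of the infimum. Continuity of $\gamma$ gives $\gamma(t_{n})\to\gamma(t^{*})$, so $\gamma(t^{*})$ lies in the closure $\overline{\mathcal{V}^{c}}$. Since $\mathcal{V}$ is closed, one has the identity
\[
\partial\mathcal{V}=\mathcal{V}\cap\overline{\mathcal{V}^{c}},
\]
and the two facts $\gamma(t^{*})\in\mathcal{V}$ and $\gamma(t^{*})\in\overline{\mathcal{V}^{c}}$ combine to give $z:=\gamma(t^{*})\in\partial\mathcal{V}$, which is the first claim.

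For the second claim, I interpret $\gamma$ here as its image $\gamma([0,T])\subset\Omega$. This image is the continuous image of a compact interval, hence compact, and in particular closed. The boundary $\partial\mathcal{V}$ of any subset of a topological space is closed. Therefore the intersection $\gamma\cap\partial\mathcal{V}$ is the intersection of two closed sets, and so is itself closed (in fact compact, as a closed subset of the compact set $\gamma$).

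There is no substantial obstacle here; the statement is essentially a topological book-keeping result that isolates the first time the path leaves $\mathcal{V}$. The only point requiring mild care is to distinguish the path as a map on $[0,T]$ (used for the first assertion via continuity and the infimum) from the path as a subset of $\Omega$ (used for the second assertion via compactness of the image), and to invoke the characterization $\partial\mathcal{V}=\mathcal{V}\cap\overline{\mathcal{V}^{c}}$ valid because $\mathcal{V}$ is closed.
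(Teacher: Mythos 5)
Your proof is correct, and it differs from the paper's in both halves. For the existence of a boundary point, the paper introduces a signed distance function $f(\cdot,\mathcal{V})$ (positive on $\mathcal{V}$, negative off it), composes it with $\gamma$, and applies the intermediate value theorem to $g=f\circ\gamma$ using $g(0)\geq 0$ and $g(T)\leq 0$; you instead run the first-exit-time argument with $t^{*}=\inf\{t:\gamma(t)\notin\mathcal{V}\}$ and land in $\mathcal{V}\cap\overline{\mathcal{V}^{c}}=\partial\mathcal{V}$. These are the same idea in different clothing, though yours avoids constructing an auxiliary function and exhibits the \emph{first} crossing rather than just some crossing. The difference is more substantive for the closedness claim: the paper assumes $\gamma\cap\partial\mathcal{V}$ is open and derives a contradiction from the interval structure of its preimage, which only rules out openness and does not by itself establish closedness; your direct argument (the image $\gamma([0,T])$ is compact hence closed, $\partial\mathcal{V}$ is closed, and an intersection of closed sets is closed) is both shorter and logically complete, so it is the preferable route. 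One cosmetic point: your assertion that $t^{*}<T$ deserves a one-line justification (if $t^{*}=T$ then $\gamma(t)\in\mathcal{V}$ for all $t<T$, and closedness of $\mathcal{V}$ with continuity would force $y=\gamma(T)\in\mathcal{V}$, a contradiction), but nothing downstream depends on it.
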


\begin{proof} We use the signed distance function $f(u,\mathcal{V})$ with $f>0$ if $u\in\mathcal{V}$ and $f<0$ when $u\notin\mathcal{V}$. It is clear that $f$ is continuous with respect to $u$. Hence,
\[
g=f\circ\gamma:[0,T]\rightarrow\Omega
\]
is also continuous with $g(0)\geq0$ and $g(T)\leq0$ since $\gamma(T)=y\notin\mathcal{V}$ and $\gamma(0)=x\in\mathcal{V}$. As a result, there is at least one point $t_1\in[0,T]$ so that $g(t_1)=0$, hence $\gamma(t_1)\in\partial\mathcal{V}$. In fact, all points satisfying $f(u)=0$ are on $\partial\mathcal{V}$.

Assume $\gamma\cap\partial\mathcal{V}$ is open, $\mathcal{B}=(f\circ\gamma)^{-1}(\gamma\cap\partial\mathcal{V})$ must be open, because $(f\circ\gamma)$ is continuous. This implies that $\mathcal{B}$ is the disjoint union of some open intervals. Take one of the open interval, say $(a,b)$, we have $a \notin \mathcal{B}$ and $f\circ\gamma((a,b))=0$. Due to the continuity of $f\circ\gamma$, we have $f\circ\gamma(a)=0$, which means $a \in \mathcal{B}$, and this is a contradiction. Therefore, $\gamma\cap\partial\mathcal{V}$ must be closed.
\end{proof}

\begin{lemma} \label{Lemma 5} Assume that (\ref{equ:feasibleregion}) holds. Graph $G=(V,E,K,p)$ is generated by Algorithm \ref{alg:2} with $l\leq\frac{2L}{\sqrt{n}}$. If $x_t\notin V$,
then the graph generating step of Algorithm \ref{alg:2} does not stop, and there exists at least one point in the feasible region that can be added to $G$ by the algorithm.
\end{lemma}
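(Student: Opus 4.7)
My plan is to combine Lemma \ref{Lemma 2}, which supplies a feasible path $\gamma_0 : [0, T_0] \to \Omega$ with an obstacle-free closed $L$-tube $\bigcup_t \overline{B(\gamma_0(t), L)}$, and Lemma \ref{Lemma 3}, which guarantees that any point on a sphere $S(x, L)$ admits an axis-shift of length $l$ landing strictly inside $B(x, L)$ whenever $l < 2L/\sqrt{n}$. The picture I would push is that the vertex of $V$ which has progressed furthest along $\gamma_0$ must always admit a legal extension, so the inner loop of Algorithm \ref{alg:2} cannot exit through the \texttt{return} $G = \emptyset$ branch while $x_t \notin V$.

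The first step is to introduce the tube vertices $V_L = \{v \in V : \|v - \gamma_0(t)\| \leq L \text{ for some } t \in [0, T_0]\}$ and the progress parameter $\eta(v) = \sup\{t : \|v - \gamma_0(t)\| \leq L\}$. Since $x_c = \gamma_0(0) \in V_L$ and $V$ is finite, I pick a maximiser $v^* \in V_L$ and set $\eta^* = \eta(v^*)$. When $\eta^* < T_0$, continuity of $\gamma_0$ together with the definition of the supremum forces $\|v^* - \gamma_0(\eta^*)\| = L$, so Lemma \ref{Lemma 3} with center $\gamma_0(\eta^*)$ and boundary point $v^*$ produces a candidate $z = v^* \pm l e_i \in B(\gamma_0(\eta^*), L)$. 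By convexity the segment $[v^*, z]$ stays inside the closed ball, which is disjoint from the open set $\mathcal{O}$ and hence from $\mathcal{O}_c \subseteq \mathcal{O}$. Either $z \notin V$, which gives $v^*$ a legal new neighbour and proves the lemma, or $z \in V$, in which case continuity of $\gamma_0$ yields $\eta(z) > \eta^*$, contradicting maximality.

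The main obstacle, as I see it, is the boundary case $\eta^* = T_0$, where $v^*$ is already within $L$ of $x_t$. When $v^* \in S(x_t, L)$, the same Lemma \ref{Lemma 3} argument centred at $x_t$ produces $z \in B(x_t, L)$; the maximality contradiction is replaced by the observation that any $z \in V$ with $\|z - x_t\| < L$ and clear line to $x_t$ would have triggered the target-addition check inside Algorithm \ref{alg:2} at its own insertion, forcing $x_t \in V$. The genuinely delicate point is that the initial vertex $x_c$ is the one exception never subjected to that check; this is handled by expanding $x_c$ directly and using that the obstacle-free ball $B(x_c, L)$ is contained in the tube. A short split between $l \leq L$ and $l > L$ (the latter arising only when $n \leq 3$, given the hypothesis $l < 2L/\sqrt{n}$) closes this last gap, and the lemma follows.
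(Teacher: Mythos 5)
Your proof is correct and rests on the same two pillars as the paper's own argument: the obstacle-free $L$-tube around a feasible path (Lemma \ref{Lemma 2}) and the axis-shift of a sphere point into the open ball (Lemma \ref{Lemma 3}). Where you differ is in how the ``frontier'' vertex is located. The paper argues by contradiction: it assumes the generation loop terminates, forms $C=\bigcup_{v\in V}\bar{B}(v,L)$, invokes Lemma \ref{Lemma 4} to find the last crossing of the tube path with $\partial C$, and applies Lemma \ref{Lemma 3} at that crossing point. You instead maximize the progress parameter $\eta(v)=\sup\{t:\|v-\gamma_0(t)\|\le L\}$ over the finite vertex set and apply Lemma \ref{Lemma 3} at $\gamma_0(\eta^*)$; your observation that any already-present $z=v^*\pm l e_i\in B(\gamma_0(\eta^*),L)$ would have strictly larger progress plays the role of the paper's claim that no vertex of $V$ lies in $B(x,L)$. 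Your version is a direct existence argument that dispenses with Lemma \ref{Lemma 4} and the topology of $\partial C$ altogether, which is arguably cleaner; the paper's version yields $x_t\in C$ as a byproduct, which it then uses to conclude that $x_t$ gets linked into the graph. Two small remarks: you should state explicitly that $v^*$ cannot yet have been expanded (i.e.\ $p(v^*)<+\infty$), since otherwise $z$ would already belong to $V$ --- this is what actually blocks the $G=\emptyset$ return branch; and your terminal case $\eta^*=T_0$, in particular the subcase where the root $x_c$ itself lies inside $B(x_t,L)$ and is never subjected to the target-addition check, is only sketched (the ``split between $l\le L$ and $l>L$'' is not worked out). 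That said, the paper's own proof silently skips this same edge case, so on that point you are, if anything, more careful than the original.
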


\begin{proof} Let us assume that the graph generating algorithm terminates after finite steps, returning a connected graph $G=(V,E)$ containing $x_s$. We denote
\[
C=\cup_{v\in V}\bar{B}(v,L),
\]
which is a closed set with $x_s\in C$. First we want to prove $x_t\in C$ by contradiction. Let us assume $x_t\notin C$. Take the path $\gamma$ in Lemma \ref{Lemma 2}, it is true that $\gamma(0)=x_s\in C$ while $\gamma(T_0)=x_t\notin C$. Since $\gamma$ is continuous, there exists at least one point that $\gamma$ intersects with $\partial C$ by Lemma \ref{Lemma 4}, and we denote it as $\{\gamma(t_i)\}$. Let $x=\sup_{t_i}\gamma(t_i)$ be the last intersection point along the path. By Algorithm \ref{alg:2}, we can find a vertex $v_c\in V$ such that $x\in S(v_c,L)$, which is the sphere centered at $v_c$ with radius $L$. Since $x\in\gamma$ and by (\ref{equ:feasibleregion}), we know that $B(x,L)\subset(\Omega\backslash\mathcal{O})$. Further we claim that there is no $v\in V\cap B(x,L)$, otherwise, $x\in B(v,L)$ implies $x\in C^o$, which is a contradiction with $x\in\partial C$.

Since the algorithm stopped, all current vertices $v\in V$ must have been tried to generate points around them. When the vertex $v_c$ is chosen, by Lemma \ref{Lemma 3}, at least one point $p$ can be generated in $B(x,L)$, which means either the algorithm should not terminate without having $p$, or $p$ already exists before, which contradicts there is no vertex of $G$ in $B(x,L)$. Therefore, we conclude that $x_t\in C$ if the algorithm stops.

If $x_t\in C$, since $x_t=\gamma(T_0)$, $B(x_t,L)\subset(\Omega\backslash\mathcal{O})$, then there is at least one vertex $v\in B(x_t,L)$, by the algorithm, an edge between $x_t$ and $v$ should be added to the graph $G$. Thus, if the algorithm stops, $x_t$ and $x_s$ are in the same connected component in the graph $G$.
\end{proof}

Now, we are ready to prove the main theorems.

\begin{proof}(Proof of Theorem \ref{thm1}) Since $\Omega$ is compact and the graph $G$ is a subset of grid points with length $l$ in $\Omega$, $G$ must contain a finite number of vertices. Otherwise, there is a cluster point in the vertex set, which implies that there exist two vertices in $G$ whose distance is strictly smaller than the generating radius $l$ regardless how small $l$ is. This contradicts to the fact that that $G$ is a subset of a grid with the smallest distance between any two points is $l$. From the construction mechanism, $G$ is always connected. Then Lemma \ref{Lemma 5} implies that $G$ connects $x_s$ and $x_t$.
\end{proof}

\begin{proof}(Proof of Theorem \ref{thm2}) We denote the detected constraints after step $i$ as $\mathcal{O}_c^i$. If the algorithm does not stop in finite steps, it means $m=\infty$. Because $\Omega$ is compact and $\mathcal{O}_c^i$ is open for every $i$, then $\Omega\backslash\mathcal{O}_c^i$ is also compact. Thus, there exists a cluster point for the stopping point set $\{\gamma_k(T_k)\}_{k=1}^\infty$. For an arbitrary $\epsilon>0$, $i$ can be chosen so that $\|\gamma_i(T_i)-\gamma_j(T_j)\|<\epsilon$ for all $j>i$. Choose $\epsilon<R-q$. Without loss of generality, one can assume that $\gamma_i$ is obtained before $\gamma_j$, and at $j$-th step, the algorithm stops because $\mathcal{O}_c\cap\gamma_j\neq\emptyset$. Even more, $$\bar{B}(\gamma_i(T_i),R)\cap\mathcal{O}_c\cap\gamma_j\neq\emptyset.$$ And since $$\bar{B}(\gamma_i(T_i),R)\cap\mathcal{O}_c=\mathcal{O}_c^i,$$ it is true that $$\mathcal{O}_c^i\cap\gamma_j\neq\emptyset$$ which is a contradiction since for each generated graph $G=(V,E,p)$ at step $j$, $$V\cap\mathcal{O}_c^{i}=\emptyset\text{ and }E\cap\mathcal{O}_c^{i}=\emptyset$$ for all $i<j$, which concludes $m<\infty$.
\end{proof}

\subsection{Proof of the bounded searching region}
In this section, we prove Theorem \ref{thm3} and related propositions. First, we give a detailed proof for the following lemma.
\begin{lemma}\label{lemma 00}
Given $\beta\geq0$, \eqref{equ:discrete} is convergent with $\mathcal{F}(\rho,\beta)$, given in \eqref{equ:freeenergy}, being a Lyapunov function. When $\beta\neq0$, \eqref{equ:discrete} converges, with any given initial condition, to the Gibbs' distribution
\begin{equation*}
\rho(x)=\frac{1}{K}\exp\left(-\frac{p(x)}{\beta}\right),
\end{equation*}
where $K$ is the normalization constant making $\rho$ a density function.
\end{lemma}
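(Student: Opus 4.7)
The plan is to show that $\mathcal{F}(\rho,\beta)$ is a Lyapunov function for \eqref{equ:discrete} by a direct chain-rule computation, and then to promote the non-positivity of $d\mathcal{F}/dt$ to convergence toward the unique Gibbs equilibrium via a LaSalle argument when $\beta>0$. Noting $F_j=p(x_j)+\beta(\log\rho_j+1)$, the chain rule gives
\[
\frac{d\mathcal{F}}{dt}=\sum_j F_j\,\dot\rho_j=\frac{1}{(\Delta x)^2}\sum_{j}\sum_{k\in Nb(j)}F_j\bigl[(F_k-F_j)_+\rho_k-(F_j-F_k)_+\rho_j\bigr]d_{jk}.
\]

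The key trick is to pair the $(j,k)$ and $(k,j)$ contributions for each unordered edge. Writing $a=F_j-F_k$ and assuming the symmetric weight $d_{jk}=d_{kj}$, the combined contribution at an unordered edge collapses to
\[
(F_j-F_k)\bigl[(F_k-F_j)_+\rho_k-(F_j-F_k)_+\rho_j\bigr]d_{jk}=-\bigl(a_+^2\rho_j+a_-^2\rho_k\bigr)d_{jk}\le 0,
\]
because $a\cdot a_+=a_+^2$ and $a\cdot a_-=-a_-^2$. Summing over edges gives $d\mathcal{F}/dt\le 0$, with equality only when $F_j=F_k$ on every edge carrying positive weight; because $GL$ is connected, this forces $F_j$ to be the same constant across all nodes.

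For $\beta>0$, the equality condition $F_j\equiv c$ reads $p(x_j)+\beta\log\rho_j\equiv c-\beta$, and solving together with the normalization $\sum_j\rho_j=1$ recovers exactly the Gibbs' distribution $\rho(x)=K^{-1}\exp(-p(x)/\beta)$. To pass from $\dot{\mathcal{F}}\le 0$ to pointwise convergence I would apply LaSalle's invariance principle on the probability simplex: mass conservation $\sum_j\dot\rho_j=0$, immediate from the antisymmetry of the directional fluxes when $d_{jk}=d_{kj}$, keeps the trajectory on the simplex, while the positivity floor $\rho_j(t)\ge\epsilon>0$ borrowed from the proof of Theorem~3 in \cite{li2016study} keeps the trajectory in the interior where $\mathcal{F}$ is $C^1$. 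Hence the $\omega$-limit set lies in $\{\dot{\mathcal{F}}=0\}$, a singleton containing the Gibbs density, and convergence from any initial distribution follows.

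The step I expect to be the main obstacle is verifying that the pairing argument and the mass-conservation identity both hold when $d_{jk}$ from \eqref{equ:proj} is used in the gradient regime, since that weight is supported only on steepest-descent edges and is not manifestly symmetric in $j,k$. If $d_{jk}$ is in fact defined symmetrically by inspection of the higher-potential endpoint, as the phrase ``without loss of generality'' in its definition suggests, the computation above goes through verbatim; otherwise one must instead show termwise non-positivity (for $\beta=0$ and $d_{jk}=1$ with $p(j)>p(k)$, the $(j,k)$-contribution reduces to $-p(j)(p(j)-p(k))\rho_j\le 0$) and check that the missing partner does not break $\sum_j\dot\rho_j=0$. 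A minor secondary issue is the singularity of $\rho\log\rho$ as $\rho\to 0$, which the positivity estimate from \cite{li2016study} dispatches.
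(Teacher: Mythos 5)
Your proof is correct and follows essentially the same route as the paper's: both take $\mathcal{F}$ as the Lyapunov function and compute $d\mathcal{F}/dt$ by pairing the $(j,k)$ and $(k,j)$ flux contributions (the paper does this by relabeling the double sum) to obtain a manifestly non-positive sum of squares, then identify the Gibbs density as the $\beta>0$ equilibrium. Your LaSalle/connectivity argument and the positivity floor from \cite{li2016study} in fact supply details the paper compresses into ``one can check directly,'' and your worry about the symmetry of $d_{jk}$ does not threaten the $\beta\neq0$ convergence claim, since the diffusion regime uses $d_{jk}=1$.
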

\begin{proof}
Taking the derivative along the solution $\rho(x,t)$ of \eqref{equ:discrete}, we have
\begin{align*}
\frac{d}{dt}\mathcal{F}(\rho(t))&=\sum_{j=1}^{|V|}F_j\frac{d\rho_j}{dt}\\
&=\sum_{j=1}^{|V|}\sum_{k\in Nb(j)}(F_k(\rho,\beta)-F_j(\rho,\beta))_+\rho_kd_{jk}F_j-\sum_{j=1}^{|V|}\sum_{k\in Nb(j)}(F_j(\rho,\beta)-F_k(\rho,\beta))_+\rho_jd_{jk}F_j\\
&=\sum_{j=1}^{|V|}\sum_{k\in Nb(j)}(F_j(\rho,\beta)-F_k(\rho,\beta))_+\rho_kd_{jk}F_k-\sum_{j=1}^{|V|}\sum_{k\in Nb(j)}(F_j(\rho,\beta)-F_k(\rho,\beta))_+\rho_jd_{jk}F_j\\
&=-\sum_{j=1}^{|V|}\sum_{k\in Nb(j)}(F_j(\rho,\beta)-F_k(\rho,\beta))^2_+\rho_kd_{jk}\leq0
\end{align*}
Because $\mathcal{F}(\rho,\beta)$ is bounded from below, \eqref{equ:discrete} is convergent. One can check directly that the Gibbs' distribution is the stationary solution of \eqref{equ:discrete}, and it is also the minimizer of $\mathcal{F}(\rho,\beta)$. 
\end{proof}

\begin{proof}(Proof of Proposition \ref{prop1}) First of all, by Lemma \ref{lemma 00}, \eqref{equ:discrete} converges when $\beta=0$. We assume that the support of one stationary solution $\rho(x)$ contains a point other than local minimizers, say $x_0\in GL$. Then there exists $z\in Nb(x_0)$ with $p(z)<p(x_0)$. By the definition of $d_{ij}$, there must be an edge linking $x_0$ and $z$ such that $d_{x_0z}=1$ and $p(z)<p(x_0)$. We construct
\begin{equation*}
\hat{\rho}(x)=\left\lbrace\begin{array}{ll}\rho(x)&x\neq x_0,x\neq z\\0&x=x_0\\\rho(z)+\rho(x_0)&x=z\end{array}\right.
\end{equation*}
It is easy to check $\mathcal{F}(\hat{\rho})<\mathcal{F}(\rho)$, which leads to a contradiction that $\rho$ is the minimizer of $\mathcal{F}(\rho)$.
\end{proof}

Given an arbitrary environment $\mathcal{O}$, let us denote $G_f=(V_f, E_f)$ the graph generated by Algorithm \ref{alg:2} with $\mathcal{O}_c = \mathcal{O}$ all the time.  We show that any node in $V_f$ must be in the region $\mathcal{R}_f$ constructed in Section \ref{FPE}.
\begin{lemma}\label{lemma 01}
If $x\in V_f$, then $x\in\mathcal{R}_f$. 
\end{lemma}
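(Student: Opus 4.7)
The plan is to proceed by induction on the order in which vertices are added to $V_f$ by Algorithm \ref{alg:2}, showing each such vertex lies in $\mathcal{R}_f$. The key observation is that Algorithm \ref{alg:2} and the region construction in Section \ref{FPE} operate on the same lattice $GL$ (mesh size $l$, aligned with the orthonormal basis $N$), select descent directions via the same potential $p$, and use the same rule for labelling ``active'' edges via the projection in \eqref{equ:proj}. Since $x\in \text{Box}(x,l)$ for every $x\in\mathcal{R}$, it suffices to show that each node of $V_f$ either belongs to $\mathcal{R}$ itself or is a lattice neighbor of some node of $\mathcal{R}$, from which membership in $\mathcal{R}_f=\bigcup_{x\in\mathcal{R}}\text{Box}(x,l)$ follows immediately.

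For the base case, the FPE procedure initializes with $\rho(\cdot,0)=\delta_{x_s}$, so the first gradient step of Section \ref{sub:gradpart} places the descent-direction neighbors of $x_s$ into $\mathcal{R}_1(x_s)\subseteq\mathcal{R}$, and $x_s$ sits in the box of any such neighbor, giving $x_s\in\mathcal{R}_f$. For the inductive step, let $v$ be a newly added vertex with ancestor $u$, and assume $u\in\mathcal{R}$. If $p(v)<p(u)$, then $v$ is a descent neighbor and in the discretization \eqref{equ:discrete} with $\beta=0$, the flux $(F_u-F_v)_+\rho_u d_{uv}$ is strictly positive once density has arrived at $u$, so $\partial\rho_v/\partial t>0$ for some $t$ and hence $v\in\mathcal{R}_1$ of the gradient step passing through $u$. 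If $p(v)\geq p(u)$, then Algorithm \ref{alg:2}'s greedy selection of $u$ as current argmin implies that no unprocessed descent direction remains, so the procedure has reached a local minimum on $GL$; by Proposition \ref{prop1} and Lemma \ref{lemma 00}, this is exactly the condition under which the FPE construction transitions to a diffusion phase with $\mathcal{R}_p\ni u$, and the Gibbs stationary density of \eqref{equ:discrete} spreads to every connected grid point, so the layered construction of $\mathcal{R}_2^s$ admits $v$ as a lattice neighbor of $u$ into some layer before the terminating index $W$.

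The main obstacle is establishing a precise correspondence between the stepwise expansion of $V_f$ and the temporal structure of the alternating FPE procedure, in particular the diffusion termination condition through \eqref{equ:escaping}. Concretely, the subtle part is arguing that for each uphill neighbor $v$ added by Algorithm \ref{alg:2} around a local trap at $u$, either $v$ enters $\mathcal{R}_2^s$ during the current diffusion phase, or else the diffusion terminates at an exit $x_i$ whose subsequent gradient and diffusion phases eventually sweep over $v$. This requires a careful chaining across multiple alternating phases, using that both processes are monotone with respect to the partial order induced by ancestry, and invoking Theorem \ref{thm1} to guarantee that only finitely many phases occur before the target is reached so the induction closes in finite time.
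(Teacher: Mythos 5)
Your overall skeleton matches the paper's: induction on the generation order, reduction to showing the expanded (interior) nodes lie in $\mathcal{R}$ so that every vertex is covered by a box $Box(\cdot,l)$, and a case split between a gradient step ($p(v)<p(u)$) and a diffusion step ($p(v)\geq p(u)$). Your gradient case is essentially the paper's argument: the greedy argmin selection forces $d_{uv}=1$ in the projected scheme, and positivity of the inflow flux gives $\partial\rho_v/\partial t>0$ at some time, hence $v\in\mathcal{R}_1$.

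However, the diffusion case contains a genuine gap, and you have in effect flagged it yourself without closing it. It is not true ``automatically'' that the layered construction $\mathcal{R}_2^s$ admits every uphill neighbor added by Algorithm \ref{alg:2}: the layers are built greedily by descending stationary density and the construction is cut off at the index $W$ as soon as an exit with lower potential appears, so a priori an algorithm-expanded node could be skipped. Your fallback alternative---that later alternating phases ``eventually sweep over $v$''---is neither proved nor used by the paper, and it cannot rescue the induction, because the next gradient phase of the region construction starts from the specific exit point $x_i$ of \eqref{equ:escaping} with density $\delta_{x_i}$; if that point differs from the node at which Algorithm \ref{alg:2} resumes descending, the flux argument in your gradient case no longer applies to the algorithm's next descent. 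The paper closes exactly these two holes: first, a contradiction argument showing that any expanded node $z$ outside $\mathcal{R}_2$ would force the connected sublevel piece $\mathcal{R}_2^o$ (all of $\mathcal{R}_2$ with potential at most $p(x_p)$, connected to $x_p$) to already lie in $V_o$, which puts the exit point $x$ into $V_o$ with $p(x)<p(z)$ and contradicts the argmin selection of $z$; second, a separate argument that this exit point $x$ is itself the node Algorithm \ref{alg:2} picks to restart the descent, so the two alternating processes re-synchronize at the start of each gradient phase. Without these two steps your induction does not close across phases.
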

\begin{proof}
Since the grid size of $GL$ is $l$, we have $G_f\subset GL$,  meaning all vertices in $G_f$ are grid points on $GL$. In fact every node generated by the algorithm must be a grid point on $GL$. By the definition of $\mathcal{R}_f$, we only need to prove that $V_o\subset\mathcal{R}$, where $V_o$ is the set of vertices which are chosen to generate new nodes by Algorithm \ref{alg:2}. For convenience, we call a node in $V_o$ an interior point of $G_f$. We call $x$ the ancestor of $y$  if $y\in Nb_V(x)$, the neighborhood of $x$,  and $x$ is generated earlier than $y$ by Algorithm \ref{alg:2}. Correspondingly we call $y$ the child of $x$.  We call Algorithm \ref{alg:2} in gradient steps if the potential of the newly chosen node is lower than its ancestor, otherwise we call it in diffusion steps. We use induction to prove the lemma. Because $G_f$ contains a finite number of nodes, the induction process stops after finite steps.  At the first step, $x_s\in V_o$ and $x_s\in\mathcal{R}$. We assume that the first $k$ generated nodes in $G_f$ are contained in $\mathcal{R}$.  We want to prove that the node $z \in \mathcal{R}$, where $z$ is the node to generate new points at $(k+1)$-th step. We assume $w$ is the ancestor of $z$ that generates $w$. Obviously, $w \in \mathcal{R}$ by our induction assumption.   

\begin{enumerate}
\item  Gradient case ($p(z) < p(w)$): by the construction of $R$ in Section \ref{sub:gradpart}, to prove $z \in \mathcal{R}$, we only need to show $\rho_t(z,T) > 0$ for some $T$. There are two scenarios. (1) There exists $T_1$ such that $\rho(z,T_1) >0$. In this case, we must be able to find $T_2 \leq T_1$  with $\rho_t(z,T_2) >0$, because $\rho(z,0) = 0$ initially. (2) $\rho(z,t) = 0$ for all $t \in [0,T_1]$. Because the algorithm picks the point that has the lowest $p$ value, we have $p(w)-p(z)>p(w)-p(u)$ for all $u\in Nb(w)$, which gives $d_{wz}=1$ in \eqref{equ:euler}. Notice that $w\in\mathcal{R}$, this means there exists $T_3$ such that $\rho_t(w,T_3)>0$, which implies that there is a time interval $I = [T_3-\Delta t,T_3+\Delta t]$, $\rho_t(w,t)>0$ for all $t \in I$. Thus, there must exist a $T_4$ such that $\rho(w,T_4)>0$. By \eqref{equ:discrete}, we have $\rho_t(z,T_4)>0$ if $\rho(z,T_4)=0$, which implies $z \in \mathcal{R}$. 

\item Diffusion case ($p(z) \geq p(w)$): We recall $\mathcal{R}_2=\bigcup_{s=1}^W\mathcal{R}_2^s$. It is easy to see that if $s>t$, we have $p(x)>p(y)$ for arbitrary $x\in\mathcal{R}_2^s$ and $y\in\mathcal{R}_2^t$ because of the property of the Gibbs' distribution. Otherwise the diffusion procedure for the set construction is stopped according to the definition of $W$. We split the proof into three steps for this case.  Firstly, we let
\begin{equation*}
x=\arg\min_{z}\{p(z):z\in\bigcup_{y\in\mathcal{R}_2^W}Nb(y)\backslash\mathcal{R}_2\}.
\end{equation*}
and $x_p\in Nb(x)\cap\mathcal{R}_2$.  By the definition of $W$,  we know that $x\in\mathcal{R}$, and $x$ is the choice for the starting configuration for the next step. And also, we claim that $x$ exists. Since $x_f \in GL$ has the lowest potential and $GL$ is a connected graph, there must be a node in $\mathcal{R}$ such that one of the outside neighbor has a lower potential, otherwise the construction of $\mathcal{R}$ do not stop.

Secondly, we claim that the part of the $V_o$ generated by the diffusion steps of graph generation is contained in $\mathcal{R}_2$, and we simply use $V_o$ to denote the diffusion part generated by the algorithm. Otherwise, there exists $z\in V_o\backslash\mathcal{R}_2$ with its ancestor $w\in\mathcal{R}_2^s$ for some $s<W$. Without loss of generality, we assume $z$ is the first node outside of $\mathcal{R}_2$ chosen by Algorithm \ref{alg:2}. Because $p(z)>p(w)$, we have $z\notin\cup_{i=1}^s\mathcal{R}_2^i$. Also, $z$ is a candidate for all $\mathcal{R}_2^i$ with $i=s+1,\cdots,W$. Therefore, $z\notin\mathcal{R}_2$ means that $p(z)>p(u)$ for all $u\in\mathcal{R}_2$. By Algorithm \ref{alg:2}, this can happen only when $\mathcal{R}_2^o\subset V_o$ where
\begin{equation*}
\bar{\mathcal{R}}_2^o=\left\lbrace u:u\in\mathcal{R}_2,p(u)\leq p(x_p)\right\rbrace,
\end{equation*}
and $\mathcal{R}_2^o\subset\bar{\mathcal{R}}_2^o$ is the connected component of $\bar{\mathcal{R}}_2^o$ containing $x_p$. Otherwise there exists $v\in Nb_V(a)\backslash V_o$ for some $a\in V_o$ such that $v\in\mathcal{R}_2$. In this situation, the algorithm will choose $v$ instead of $z$ because of $p(v)<p(z)$. This cause a contradiction with the fact $z$ is the next node selected by the algorithm. On the other hand, if $\mathcal{R}_2^o\subset V_o$ holds, we have $x\in V_o$ and there exists $y\in V_o$ such that $p(y)\leq p(x)<p(z)$, leading to the fact that Algorithm \ref{alg:2} does not put $z$ into $V_o$. Thus, $V_o\subset\mathcal{R}_2\subset\mathcal{R}$.

At last, we claim that $x\in V_o$, which gives us that the start node of the next gradient step shared by both the construction of $\mathcal{R}$ and  the graph generation in the algorithm. That is, $x$ is chosen by Algorithm \ref{alg:2} to generate new points. Otherwise by the same argument for the existence of $x$, there is some node $y\in V$ such that Algorithm \ref{alg:2} generates a node $q$ based on $y$ with $p(q)<p(y)$ and $p(y)-p(q)$ maximizes the potential gap amongst all similar pairs. So, $y\in V_o$. However, by the definition of $x$, it is the lowest potential point has this property within the region $\mathcal{R}_2$. Thus $y=x$. And in the next generation step, $q$ will be generated, and meanwhile, the region growing procedure will also choose $q$ as the start point of the next gradient step.
\end{enumerate}
\end{proof}

\begin{proof}(Proof of Theorem \ref{thm3}) 
Let $G_f=(V_f,E_f)$ be the graph generated by the algorithm with $\mathcal{O}_c=\mathcal{O}$ all the time. We denote $G=(V,E)$  a graph generated with currently known environment $\mathcal{O}_c$ and initial configuration at $x$. Since $x \in G_f$ at the first step, a simple induction argument can show that every initial configuration $x$ used to generate the graph $G$ must satisfy $x \in G_f$  too. We claim that $G$ and $G_f$ must be the same in the ball $B(x,R)$, where we recall $R$ the detectable radius, i.e. $G\cap B(x,R)=G_f\cap B(x,R)$,  because $G$ and $G_f$ are generated by the same algorithm with the same knowledge of the environment in $B(x,R)$. We denote the connected common part of $G$ and $G_f$ as $G_o$, and further claim that the path $\gamma$ is in $G_o$. If not, there must exist the first node generated in the graph generation step $y\in G\backslash G_o$ on the feasible trajectory on $G$. This implies that $y\in\mathcal{O}\backslash\mathcal{O}_c$ since the ancestor of $y$ is in $G_o$, which can not contain any node in the infeasible region. However when the robots move along the path $\gamma$ to the node before $y$, the system can detect that $y$ is infeasible because $R>L$, and should stop before reaching $y$, which contradict with the fact that $y$ is on the feasible trajectory. This concludes that the trajectory of the robots is restricted on $G_o\subset G_f$ in arbitrarily given known environment $\mathcal{O}_c\subset\mathcal{O}$. By Lemma \ref{lemma 01}, $G_f$ is bounded by $\mathcal{R}_f$, hence the trajectory of the system is bounded by $\mathcal{R}_f$. 
\end{proof}

\section{Conclusion} \label{conclusion}
In this paper, an iterative algorithm is presented to solve the pathfinding problem in unknown environments. The algorithm is inspired by the solution of FPE in optimal transport theory, it contains Graph Generating, Path Finding and Environment Updating steps, among which graph generating is the key one. Guided by a potential function, the graph generating algorithm creates a tree structure, originating from the initial configuration, and aiming to the target configuration. Our approach has several advantages. First of all, the algorithm is deterministic and complete. It terminates in finite steps, returning a path, if there exists one, that is optimal with respect to the known environments at the moment of planning. Secondly, the generated graph grows linearly with respect to the dimension of the configuration space. Together with the dimension reduction strategies for escaping the local traps, our algorithm can be used to compute high dimensional problems. Lastly, as proved by using FPE and optimal transport theory, only part of the configuration space needs to be searched. We also emphasize that our assumption on the existence of a feasible path is only a technical requirement for the proof of the convergence. If such a path does not exist, the proposed algorithm can recognize the situation and stop the calculation in a finite number of iterations. In this case, one may conclude that there is not a feasible path which can be identified by using $l$ as the search step size, or reduce $l$ to further refine the path finding computation.

The proposed algorithm is our initial exploration of using optimal transport theory for path planning. Further improvements can be made on many fronts. For example, one can speed up the calculation in escaping the traps at local minimizers, by using random graph generating strategies. This is very useful in high dimensional situations.  The graph generating radius $l$ can be adaptive. Longer steps can be taken in wide open space and shorter step size is used when encountered narrow corridors. In addition, the proposed algorithms can be adapted to construct a general strategy for control problems with constraints, especially when some constraints are only available during the processes. Those are directions that are worth further studies, and we will report our findings in future papers.

\section{Acknowledgements}
	The research is partially support by grants NSF DMS-1830225, DMS-1620345, and ONR N00014-18-1-2852.
\bibliographystyle{IEEEtran}
\bibliography{ref}

\end{document}